\newcommand{\R}{\mathbb R}
\newcommand{\K}{\mathcal K}
\newcommand{\F}{\mathcal F}
\newcommand{\N}{\mathbb N}
\renewcommand{\L}{\mathcal L}
\newcommand{\conv}{\mathrm{conv}}
\newcommand{\alfa}{\gamma}
\newcommand{\supp}{\mathrm{supp}}
\newcommand{\inter}{\mathrm{int}}
\newcommand{\lin}{\mathrm{span}}
\def\vol{{\rm vol}}
\def\epi{{\rm epi}}
\newcommand{\iprod}[2]{\langle #1,#2 \rangle} % Inner product %
\newtheorem{thm}{Theorem}[section]
\newtheorem{lemma}[thm]{Lemma}
\newtheorem{proposition}[thm]{Proposition}
\newtheorem{cor}[thm]{Corollary}
\newtheorem{df}{Definition}[section]
\newtheorem{rmk}[thm]{Remark}
\begin{document}

%%%%%%%%%%%%%%%%%%%%%%%%%%%%%%%%%%%%%%%%%%%%%5
\title[Rogers-Shephard and Loomis-Whitney inequalities]{Rogers-Shephard and local Loomis-Whitney type inequalities}

\author[D.~Alonso-Guti\'errez]{David Alonso-Guti\'errez}
\address[D.~Alonso]{Universidad de Zaragoza (Spain)}
\email{alonsod@unizar.es}

\author[S.~Artstein-Avidan]{Shiri Artstein-Avidan}
\address[S.~Artstein-Avidan]{Tel-Aviv University (Israel) }
\email{shiri@post.tau.ac.il}

\author[B.~Gonz\'alez]{Bernardo Gonz\'{a}lez Merino}
\address[B. Gonz\'alez]{University Centre of Defence at the Spanish Air Force Academy, MDE-UPCT, Murcia (Spain)}
\email{bernardo.gonzalez@cud.upct.es}

\author[C.H.~Jim\'enez]{C. Hugo Jim\'enez}
\address[C.~H.~Jim\'enez]{Pontif\'icia Universidade Cat\'olica do Rio de Janeiro (Brazil)}
\email{hugojimenez@mat.puc-rio.br}

\author[R.~Villa]{Rafael Villa}
\address[R.~Villa]{Universidad de Sevilla (Spain)}
\email{villa@us.es}

\thanks{The first named author is partially supported by Spanish grant MTM2016-77710-P
projects and by IUMA. The second named author is partially supported by ISF grant number 665/15. The third named author is partially supported by Fundaci\'{o}n S\'{e}neca, Science and Technology Agency of the Regi\'{o}n de Murcia, through the
Programa de Formaci\'on Postdoctoral de Personal Investigador, project reference 19769/PD/15, and
the Programme in Support of Excellence Groups of the Regi\'{o}n de Murcia, Spain, project reference 19901/GERM/15.
The fourth named author is supported by CNPq and the program Incentivo \`a produtividade em ensino e pesquisa from the PUC-Rio.
The fifth named author is partially supported by MINECO project reference MTM2015-63699-P, Spain.}

\date{\today}\maketitle

\begin{abstract}
We provide functional analogues of the classical geometric inequality of Rogers and Shephard on products of volumes of sections and projections. As a consequence we recover (and obtain some new) functional versions of Rogers-Shephard type inequalities as well as some  generalizations of the geometric Rogers-Shephard inequality in the case where the subspaces intersect. These generalizations can be regarded as sharp local reverse Loomis-Whitney inequalities. We also obtain a sharp local Loomis-Whitney inequality.
\end{abstract}

\section{Introduction and main results}

The comparison of the volume of a convex body with the volumes of its
sections and projections can be useful in many situations. Fubini's theorem implies the trivial bound for the volume of an $n$-dimensional convex body $K$
\begin{equation}\label{eq:upperbound}
\vol_n(K) \le \vol_{i}(P_HK) \max_{x_0 \in \R^n}\vol_{n-i}(K\cap(x_0+H^{\bot})),
\end{equation}
where $H$ denotes any $i$-dimensional linear subspace, $P_HK$ the orthogonal projection of $K$ onto $H$, and $\vol_n(K)$ the $n$-dimensional volume of an $n$-dimensional convex body. The reverse bound is a classical theorem of Rogers and Shephard
 \cite[Theorem 1]{RS58}: For any convex body $K\subset \R^n$ and any $i$-dimensional subspace $H$
 we have
\begin{equation}\label{eq:volumesplit}
 \vol_{i}(P_HK)  \vol_{n-i}(K\cap H^{\bot}) \le  {n\choose i}\vol_n(K).
\end{equation}
They also showed that equality holds if and only if
%for all $x\in P_HK$ the intersections $K\cap(x+H^{\bot})$ are
%homothetic, with a scaling proportional to the distance to $K\cap H^\bot$.
%$\vol(K\cap H^{\bot})=\max_{x_0\in H}\vol(K\cap(x_0+H^\bot))$ and
for every $v\in H$ the intersection $K\cap (H^{\bot}+\R^+v)$ is the convex hull of $K\cap H^{\bot}$ and one point.

The first goal of this paper is to give a functional analogue of inequality \eqref{eq:volumesplit} for log-concave functions. Log-concave measures naturally arise in Convex Geometry, firstly because the Brunn-Minkowski inequality establishes the log-concavity of the Lebesgue measure and of the marginals of the uniform measure on convex sets and, secondly, because the class of log-concave functions is the smallest class, closed under limits, that contains the densities of such marginals.

The space of log-concave measures
has shown to be fundamental in several areas of mathematics. From a functional point of view they resemble Gaussian
functions in many different ways. Many functional inequalities satisfied by Gaussian functions, like Poincar\'e and Log-Sobolev inequalities, also hold in a more general subclass of log-concave functions \cite{BBCG,Bob}. They also appear in areas like Information Theory, in the study of some important parameters, such as the classical entropy \cite{BM1}. Besides, there are in the literature many examples of functional inequalities with a geometric counterpart;
Pr\'ekopa-Leindler/Brunn-Minkowski \cite{Prek} and Sobolev/Petty projection \cite{Zha} inequalities are two of the main
examples. This has generated an increasing interest in extending several important parameters of convex bodies to functional parameters \cite{AGJV,AGJV2,AKM,AKSW,BCF,BM2,Col,CLM,FrMe,KM}.

Moving back to the geometric world, one may ask for volume comparisons in the sense of \eqref{eq:upperbound} and \eqref{eq:volumesplit} when
the two given subspaces present a non-trivial intersection. These questions have been repeatedly addressed several times in the last decade \cite{CaGr,BoTh,BGL,FrGiMe,GiHaPa,SoZv,Xiao}. In essence, we will show these new type of inequalities using different tools.
In order to derive them, we will use, on the one hand, the functional extensions of \eqref{eq:volumesplit} announced above,
and on the other hand, Berwald's second
inequality (cf.~Appendix in Section \ref{sec:appendixBerwald}) to provide extensions of inequalities of the type of \eqref{eq:upperbound}.
These approaches show that the log-concave measure settings present the right level of complexity for these type of questions.

A function $f:\R^n\rightarrow[0,\infty)$ is called log-concave if there exists a convex function $u:\R^n\rightarrow(-\infty,\infty]$ such that
$f=\exp(-u)$, or equivalently, if
\[
f((1-\lambda)x+\lambda y)\geq f(x)^{1-\lambda}f(y)^\lambda,
\]
for every $x,y\in\R^n$ and $\lambda\in[0,1]$. For a convex body $K\in\mathcal K^n$ we denote by $\chi_K$ its characteristic function, i.e.,
\[
\chi_K(x):=\left\{\begin{array}{cc}1 & \text{if }x\in K\\ 0 & \text{ otherwise.}\end{array}\right.
\]
We denote by $\F(\R^n)$ the set of log-concave integrable functions on $\R^n$.
By $\L^n_i$ we denote the set of all $i$-dimensional linear subspaces of $\R^n$.
Given $f\in\mathcal{F}(\R^n)$ and $H\in\mathcal{L}^n_i$, the projection of $f$ onto $H$ (also called the ``shadow'' of $f$, not to be confused with its marginal, cf.~\cite[Pg.~178]{KM}) is defined by
$$
P_Hf(x):=\max\{f(y)\,:\,y\in x+H^\perp\}\quad\forall x\in H.
$$
We show
\begin{thm}\label{th:splittRS}
	Let $f\in\mathcal{F}(\R^n)$ and $H\in\mathcal{L}^n_i$. Then %For any $x_0\in H$
	\begin{equation}\label{eq:splittingTheorem}
\int_{H}P_Hf(x)dx \int_{H^{\bot}}f(y)dy	\le \binom{n}{i} \Vert f\Vert_{\infty}\int_{\R^n}f(z)dz.
	\end{equation}
Equality holds if and only if $\frac{f}{\Vert f\Vert_\infty}=\chi_K$, for some  $K\in\K^n$, such that equality holds in \eqref{eq:volumesplit}, i.e., for every
$v\in H$ the intersection $K\cap (H^{\bot}+\R^+v)$ is the convex hull of $K\cap H^{\bot}$ and one point.
\end{thm}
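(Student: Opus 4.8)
The plan is to reduce \eqref{eq:splittingTheorem} to the geometric Rogers--Shephard inequality \eqref{eq:volumesplit} by slicing $f$ into its superlevel sets. After normalizing $\Vert f\Vert_\infty = 1$ (the cases $i\in\{0,n\}$ and $f\equiv 0$ being trivial), write $f = e^{-u}$ with $u$ convex and lower semicontinuous, and set $K_t := \{z\in\R^n : f(z)\ge t\}$ for $t\in(0,1)$. Integrability of $f$ makes each $K_t$ a bounded convex set, hence a convex body for a.e.\ $t$, and forces $u\to\infty$ along every ray, which in turn guarantees that the shadow $P_Hf$ and the restriction $f|_{H^\perp}$ are again integrable. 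The first step is the level-set bookkeeping: for $t>0$ one has $\{x\in H : P_Hf(x)\ge t\} = P_HK_t$ and $\{y\in H^\perp : f(y)\ge t\} = K_t\cap H^\perp$, so Fubini's theorem yields
\begin{equation*}
\int_{H}P_Hf = \int_0^1\vol_i(P_HK_t)\,dt,\quad \int_{H^\perp}f = \int_0^1\vol_{n-i}(K_t\cap H^\perp)\,dt,\quad \int_{\R^n}f = \int_0^1\vol_n(K_t)\,dt.
\end{equation*}

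The second step is to feed \eqref{eq:volumesplit} into this. Applying \eqref{eq:volumesplit} to each $K_t$ gives $a(t)b(t)\le\binom{n}{i}\vol_n(K_t)$, where $a(t):=\vol_i(P_HK_t)$ and $b(t):=\vol_{n-i}(K_t\cap H^\perp)$. Because $t\mapsto K_t$ is decreasing for inclusion, $a$ and $b$ are both non-increasing on $(0,1)$, hence similarly ordered, so Chebyshev's integral inequality over $((0,1),dt)$ (a probability space) gives
\begin{equation*}
\Bigl(\int_0^1 a(t)\,dt\Bigr)\Bigl(\int_0^1 b(t)\,dt\Bigr)\le\int_0^1 a(t)b(t)\,dt\le\binom{n}{i}\int_0^1\vol_n(K_t)\,dt.
\end{equation*}
Together with the identities above, this is exactly \eqref{eq:splittingTheorem}.

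The third step is the equality analysis. The easy direction is clear: if $f=\chi_K$ then $K_t=K$ for every $t\in(0,1)$, so \eqref{eq:splittingTheorem} collapses to \eqref{eq:volumesplit} for $K$ and holds with equality iff $K$ is an extremizer of \eqref{eq:volumesplit}. For the converse, equality in \eqref{eq:splittingTheorem} forces equality in every link above: the Chebyshev step forces one of $a,b$ to be a.e.\ constant on $(0,1)$, and the integrated form of \eqref{eq:volumesplit} forces $K_t$ to be an extremizer of \eqref{eq:volumesplit} for a.e.\ $t$. The hard part — and the step I expect to be the real obstacle — is to combine these two facts with the log-concavity of $f$, concretely with the Minkowski concavity $K_{\sqrt{ts}}\supseteq\frac12(K_t+K_s)$ of the level sets, in order to conclude that $K_t$ is a fixed body $K$ for a.e.\ $t$ (equivalently $f=\chi_K$); note that without log-concavity this would fail. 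I expect the argument to run as follows: say $b$ is the constant one, so $K_t\cap H^\perp=B$ is a fixed base for a.e.\ $t$; the extremality of $K_t$ in \eqref{eq:volumesplit} says that over each ray of $H$ the body $K_t$ is the cone spanned by $B$ and a single apex; plugging two nested extremizers $K_s\subseteq K_t$ with common base $B$ into the Minkowski concavity, and using that $K_{\sqrt{ts}}$ is again forced to be of this conical shape while lying between $K_s$ and $K_t$, should pin down the apices and give $K_s=K_t$ for a dense family of pairs, hence $K_t$ constant a.e. Once $f=\chi_K$, the condition on $K$ is just extremality in \eqref{eq:volumesplit}, as in the statement.
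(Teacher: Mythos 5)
Your proof of the inequality itself is correct, and it is genuinely different from the paper's. You normalize $\Vert f\Vert_\infty=1$, slice $f$ into superlevel sets $K_t$, feed the geometric Rogers--Shephard inequality \eqref{eq:volumesplit} into each slice, and then recombine with Chebyshev's integral inequality applied to the two non-increasing functions $a(t)=\vol_i(P_HK_t)$ and $b(t)=\vol_{n-i}(K_t\cap H^\perp)$. The paper instead first symmetrizes ($S_Hf$), builds from the restrictions $S_Hf|_H$ and $S_Hf|_{H^\perp}$ the ``functional convex hull'' $F\tilde\star G\le S_Hf$, and invokes Lemma~\ref{orthoRS}, whose own proof rests on Lemma~\ref{Ortholemma} (a reverse form of \eqref{eq:volumesplit} for convex hulls) and Lemma~\ref{lem:MinLowerBound}. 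Note, though, that your Chebyshev step and the paper's Lemma~\ref{lem:MinLowerBound} are the same estimate in two outfits: the layer-cake identity rewrites $\int_0^1 a(t)b(t)\,dt$ as $\int\int\min\{f,g\}$, and the pointwise bound $\min\{u,v\}\ge uv$ on $[0,1]^2$ is precisely Chebyshev's inequality transported from the level-set variable to the function side. What your route buys is a shorter, more elementary proof of the inequality that dispenses entirely with the symmetrization $S_Hf$, with the operation $\tilde\star$, and with Shephard's moving-shadows argument hidden inside Lemma~\ref{Ortholemma}; it reduces Theorem~\ref{th:splittRS} directly to \eqref{eq:volumesplit}. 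What the paper's route buys is that the intermediate objects ($S_Hf$, $F\tilde\star G$, Lemma~\ref{orthoRS}) are exactly the tools reused to prove the other results in Section~\ref{sec:Estimates by marginals} and to organize the equality analysis in Section~\ref{sec:Equality cases}.

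The gap is where you say you expect it: the equality characterization. Equality in your chain forces (i) $a$ or $b$ to be a.e.\ constant on $(0,1)$ and (ii) $K_t$ to be a Rogers--Shephard extremizer for a.e.\ $t$, and from this you must still deduce that $K_t$ is one fixed body, i.e.\ that $f/\Vert f\Vert_\infty$ is a characteristic function. You correctly identify that the extra ingredient is the log-concavity inclusion $K_{t^{1-\lambda}s^{\lambda}}\supseteq(1-\lambda)K_t+\lambda K_s$, and the scenario you sketch (fixed base $B=K_t\cap H^\perp$, conical extremizers, compare apices) does close in examples; but as written it is an outline, not a proof, and it only treats the case ``$b$ constant'' and not the symmetric case ``$a$ constant'' (where it is the projection $P_HK_t$ that is pinned and the base that shrinks). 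You would also need to handle the passage from ``a.e.\ $t$'' to ``all $t$'' using the monotonicity and one-sided continuity of $t\mapsto K_t$. The paper sidesteps all of this: in Subsection~\ref{subsub:splittRS} it uses the rigid equality cases of Lemma~\ref{orthoRS} together with $S_Hf=F\tilde\star G$ to force $S_Hf=\Vert f\Vert_\infty\chi_C$ directly, from which $f=\Vert f\Vert_\infty\chi_K$ follows at once, and the RS condition on $K$ then drops out. So: the inequality half of your proof is a clean alternative to the paper; the equality half needs to be completed, and the paper's own completion uses exactly the machinery your streamlined route discards.
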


If $f=\chi_K$ is the characteristic function of some $K\in\mathcal K^n$, then \eqref{eq:splittingTheorem} recovers \eqref{eq:volumesplit}.
Interestingly, there is a ``non-linear'' extension of \eqref{eq:volumesplit} which demonstrates a very different facet of the classical inequality, invisible on the purely geometric level. We prove:

\begin{thm}\label{SplittingColesanti}
	Let $f\in\F(\R^n)$ such that $f(0)=\Vert f\Vert_{\infty}$, $H\in\L^n_i$, and $\lambda\in[0,1]$. Then
	\[
	(1-\lambda)^i\lambda^{n-i}\int_{H}P_Hf(x)^{1-\lambda}dx\int_{H^{\bot}}f(y)^{\lambda}dy\leq \int_{\R^n}f(z)dz.
	\]
Equality holds if and only if for every $(x,y)\in H\times H^\perp$, $f(x,y)=\exp(-\Vert (x,y)\Vert_K)$
for some $K\in\mathcal{K}^n$ with $0\in K$ such that for every $v\in H$ the intersection $K\cap (H^{\bot}+\R^+v)$ is the convex hull of $K\cap H^{\bot}$ and one point.
\end{thm}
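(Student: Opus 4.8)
The plan is to first normalise $\|f\|_\infty=f(0)=1$ (both sides of the claimed inequality are positively homogeneous of degree $1$ in $f$), write $f=e^{-u}$ with $u$ convex and $u(0)=0$, and fix the orthogonal splitting $\R^n=H\times H^\perp$, so that $P_Hf(x)=\sup_{y\in H^\perp}f(x,y)$ for $x\in H$. I would first record the soft facts needed: $P_Hf$ is again log-concave; since $f\in\F(\R^n)$ has compact superlevel sets, the supremum defining $P_Hf(x)$ is attained whenever positive, and $P_Hf$ and $f(0,\cdot)$ also have compact superlevel sets, hence decay exponentially, so every integral in the statement is finite. I would also restrict to $\lambda\in(0,1)$, the endpoints being degenerate.

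The inequality itself reduces to one application of log-concavity. Fix $a\in H$ with $P_Hf(a)>0$ and let $y_a\in H^\perp$ attain the supremum defining $P_Hf(a)$. For every $b\in H^\perp$, applying log-concavity of $f$ to $(1-\lambda)(a,y_a)+\lambda(0,b)=\big((1-\lambda)a,(1-\lambda)y_a+\lambda b\big)$ gives
\begin{equation}\label{eq:starineq}
f\big((1-\lambda)a,(1-\lambda)y_a+\lambda b\big)\ \ge\ f(a,y_a)^{1-\lambda}f(0,b)^{\lambda}\ =\ P_Hf(a)^{1-\lambda}f(0,b)^{\lambda}.
\end{equation}
Integrating \eqref{eq:starineq} in $b$ over $H^\perp$ and using the affine change of variables $w=(1-\lambda)y_a+\lambda b$, of Jacobian $\lambda^{n-i}$, yields, with $F(x):=\int_{H^\perp}f(x,y)\,dy$, that $F\big((1-\lambda)a\big)\ge\lambda^{n-i}P_Hf(a)^{1-\lambda}\int_{H^\perp}f(0,b)^{\lambda}\,db$. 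Integrating this in $a$ over $H$, substituting $x=(1-\lambda)a$ (Jacobian $(1-\lambda)^{i}$), and using Fubini in the form $\int_HF(x)\,dx=\int_{\R^n}f$, gives precisely the asserted inequality.

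For equality I would track which steps must be tight: only \eqref{eq:starineq} was used, integrated against strictly positive measures, so equality forces equality in \eqref{eq:starineq} for a.e.\ $(a,b)$. Evaluating the intermediate estimate at $a=0$ then forces equality in the elementary bound $\lambda^{n-i}\int_{H^\perp}f(0,\cdot)^{\lambda}\le\int_{H^\perp}f(0,\cdot)$ (substitute $y\mapsto y/\lambda$ and use $u(0,z)\le\lambda\, u(0,z/\lambda)$), whose equality case is the dilation identity $f(0,\lambda b)=f(0,b)^{\lambda}$; since a nonnegative convex $\psi$ on $[0,\infty)$ with $\psi(0)=0$ and $\psi(\lambda r)=\lambda\psi(r)$ has $\psi(r)/r$ constant (the ratio is nondecreasing and invariant under the contraction $r\mapsto\lambda r$), this makes $-\log f(0,\cdot)$ positively $1$-homogeneous, so $f(0,y)=e^{-\|y\|_{L}}$ for a convex body $L\subset H^\perp$ with $0\in\inter L$. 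Feeding this back into the equality version of \eqref{eq:starineq} shows each slice $f(x,\cdot)$ is a shifted rescaling of $e^{-\|\cdot\|_{L}}$; hence $f>0$ on $\R^n$, and taking suprema in $b$ gives $P_Hf((1-\lambda)a)=P_Hf(a)^{1-\lambda}$, so by the same dilation argument $P_Hf(x)=e^{-\|x\|_{M_0}}$ for a convex body $M_0\subset H$ with $0\in\inter M_0$. Now \eqref{eq:starineq} is an equality everywhere, and solving it for $u$ gives $u(x,y)=\|x\|_{M_0}+\|y-w(x)\|_{L}$ where $w(x)=(1-\lambda)y_{x/(1-\lambda)}$ is the (now unique) maximiser of $f(x,\cdot)$, with $w(0)=0$ and $w((1-\lambda)x)=(1-\lambda)w(x)$; convexity of $u$ (it lies below its chords and above the slice-minima $\|\cdot\|_{M_0}$) upgrades $w$ to be positively $1$-homogeneous, so $u(x,y)=\|x\|_{M_0}+\|y-w(x)\|_{L}$ is positively $1$-homogeneous, nonnegative and vanishes only at the origin; being convex, it is the gauge $\|\cdot\|_{K}$ of a convex body $K=\{u\le1\}$ with $0\in\inter K$, $P_HK=M_0$ and $K\cap H^\perp=L$. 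Finally, for $f=e^{-\|\cdot\|_{K}}$ the identity $\int_{\R^m}e^{-\|z\|_{C}}\,dz=m!\,\vol_m(C)$ turns the inequality of the theorem into exactly the Rogers--Shephard inequality \eqref{eq:volumesplit} for $K$, so equality there holds and, by \cite[Theorem 1]{RS58}, is equivalent to the stated cone condition on $K$; reversing the implications gives the converse.

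I expect the main obstacle to be this equality analysis rather than the inequality: one must pass from a.e.\ to genuinely pointwise equality in \eqref{eq:starineq} while controlling the merely measurable, a priori set-valued maximiser selection $x\mapsto y_x$ and the accompanying support/positivity bookkeeping; one must extract full positive $1$-homogeneity of $-\log f(0,\cdot)$, of $-\log P_Hf$ and of $w$ from the single available dilation factor $\lambda$ (respectively $1-\lambda$); and one must recognise $\|x\|_{M_0}+\|y-w(x)\|_{L}$ as the gauge of a convex body, which is what ties the functional statement back to the geometric Rogers--Shephard equality case. The inequality, by contrast, is two lines of changes of variables once \eqref{eq:starineq} is in place.
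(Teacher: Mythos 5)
Your argument is correct but follows a genuinely different route from the paper's. The paper proceeds in two stages: Lemma~\ref{th:SplittColesLambda} gives the analogous bound with the \emph{section} $f|_H$ in place of $P_Hf$ (obtained by splitting $z=(1-\lambda)(z_1/(1-\lambda),0)+\lambda(0,z_2/\lambda)$ and using log-concavity), and the theorem then follows by applying this to the symmetral $S_Hf$, which satisfies $S_Hf|_H=P_Hf$ and has the same total and $\lambda$-power fiber integrals as $f$. You avoid symmetrization entirely by folding the projection directly into the log-concavity step: you pick a maximiser $y_a\in H^\perp$ with $P_Hf(a)=f(a,y_a)$, apply log-concavity along the shifted combination $(1-\lambda)(a,y_a)+\lambda(0,b)$, and two affine changes of variables give the inequality in a few lines with no machinery. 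The trade-off shows in the equality analysis, where the paper's route pays off: equality in the lemma forces $S_Hf=e^{-\Vert\cdot\Vert_{\conv(K_1,K_2)}}$, and the Brunn--Minkowski equality case then gives $f=e^{-\Vert\cdot\Vert_K}$ at a stroke. You instead must recover the gauge structure directly, which is more delicate; your dilation argument for $f(0,\cdot)$ and $P_Hf$ is sound, and the one step that needs spelling out --- positive $1$-homogeneity of the maximiser map $w$ --- does in fact follow. Concretely, from $u(x,y)=\Vert x\Vert_{M_0}+\Vert y-w(x)\Vert_L$, the convexity of $u$ along the segment joining $(x_1,w(x_1))$ and $(\mu x_1,w(\mu x_1))$, combined with the lower bound $u\ge\Vert\cdot\Vert_{M_0}$, which is linear along the ray $\R^+x_1$, forces
\[
w\bigl((1-\theta+\theta\mu)x_1\bigr)=(1-\theta)w(x_1)+\theta\,w(\mu x_1)\qquad(\theta\in[0,1],\ \mu\ge0),
\]
and $\mu=0$ together with $w(0)=0$ yields $w(sx)=sw(x)$ for all $s\ge0$. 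With this filled in, $u$ is a positively $1$-homogeneous convex function vanishing only at the origin, hence a gauge $\Vert\cdot\Vert_K$ with $P_HK=M_0$ and $K\cap H^\perp=L$, and the remaining reduction to the equality case of \eqref{eq:volumesplit} goes through exactly as you describe.
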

To see that Theorem \ref{SplittingColesanti} is indeed an extension of \eqref{eq:volumesplit}, plug in
$f(x) = \exp(-\Vert \cdot\Vert_K)$   for some $K\in\mathcal K^n$ with $0\in K$, inequality \eqref{eq:volumesplit} is recovered.
\begin{rmk}
Notice that we allow 0 to be in the boundary of $K$. In such case we understand that
\[
\|x\|_K:=\left\{\begin{array}{lr}\inf\{\rho\geq 0:x\in\rho K\} & \text{if }x\in\rho K\text{ for some }\rho\geq 0\\
\infty & \text{otherwise.}\end{array}\right.
\]
%$\Vert0\Vert_K=1$ and $\Vert x\Vert_K=\infty$ for every $x\neq 0$ such that $[0,x]\cap K=\{0\}$.
\end{rmk}
%\textcolor{red}{This characterization is wrong. If one takes for instance $f(x)=e^{-x-y}\chi_{[0,\infty)^2}(x,y)$ there is also equality. }
%\textcolor{blue}{Please check the comment I wrote on this issue in Section \ref{sec:Estimates by marginals}. I left it there and not here because it is a quite technical comment, not really fitting for the introduction.}

Analogously to the extensions above, we also provide a non-linear functional inequality for log-concave functions
in the spirit of \eqref{eq:upperbound}.

\begin{thm}\label{th:upperboundcharact}
Let $f\in\mathcal{F}(\R^n)$, $H\in\mathcal L^n_i$, and $\lambda\in[0,1]$. Then
\[
\int_{\R^n}f(z)dz\leq\int_HP_Hf(x)^{1-\lambda}dx\max_{x_0\in H}\int_{x_0+H^\bot}f(y)^\lambda dy.
\]
\end{thm}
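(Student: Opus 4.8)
The plan is to prove the inequality directly by Fubini's theorem together with the pointwise domination $f(z)\le P_Hf(x)$ for $z\in x+H^\perp$. Write every $z\in\R^n$ uniquely as $z=x+y$ with $x\in H$ and $y\in H^\perp$, so that
\[
\int_{\R^n}f(z)\,dz=\int_H\left(\int_{H^\perp}f(x+y)\,dy\right)dx .
\]
Fix $x\in H$ and split the integrand as $f(x+y)=f(x+y)^{1-\lambda}f(x+y)^{\lambda}$. Since $1-\lambda\ge 0$ and, by definition of the shadow, $f(x+y)\le P_Hf(x)$ for all $y\in H^\perp$, we get $f(x+y)^{1-\lambda}\le P_Hf(x)^{1-\lambda}$, hence
\[
\int_{H^\perp}f(x+y)\,dy\le P_Hf(x)^{1-\lambda}\int_{H^\perp}f(x+y)^{\lambda}\,dy
= P_Hf(x)^{1-\lambda}\int_{x+H^\perp}f(w)^{\lambda}\,dw
\le P_Hf(x)^{1-\lambda}\max_{x_0\in H}\int_{x_0+H^\perp}f(y)^{\lambda}\,dy .
\]

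Integrating this bound over $x\in H$ and pulling the (constant) maximum out of the integral yields
\[
\int_{\R^n}f(z)\,dz\le\left(\max_{x_0\in H}\int_{x_0+H^\perp}f(y)^{\lambda}\,dy\right)\int_HP_Hf(x)^{1-\lambda}\,dx ,
\]
which is the claimed inequality. I would add two small remarks for completeness: if $\lambda=1$ the factor $P_Hf(x)^{1-\lambda}$ is to be read as the indicator of $\{P_Hf>0\}$ and the statement reduces to the elementary Fubini bound \eqref{eq:upperbound} applied to $\{f>0\}$; and if either factor on the right-hand side is infinite there is nothing to prove, so one may freely assume $f$ (equivalently $P_Hf^{1-\lambda}$ and $f^{\lambda}$ on the relevant slices) integrable, which is automatic for $f\in\F(\R^n)$.

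Unlike the companion lower bound of Theorem \ref{th:splittRS}, this upper estimate requires no Rogers--Shephard or Berwald input: it is exactly the ``easy'' Fubini-type direction, and the only point to get right is that $1-\lambda\ge 0$ is what legitimizes replacing $f(x+y)^{1-\lambda}$ by $P_Hf(x)^{1-\lambda}$ inside the inner integral. Consequently I do not anticipate any genuine obstacle; the mild technical care is simply in handling the boundary cases $\lambda\in\{0,1\}$ and the possibility of infinite right-hand side, as indicated above.
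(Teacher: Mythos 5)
Your argument is correct and is essentially the paper's own proof: Fubini over $H\times H^\perp$, split $f=f^{1-\lambda}f^{\lambda}$, bound one factor pointwise by the shadow $P_Hf$, and replace the remaining fiber integral by its supremum over fibers. The only cosmetic difference is that you bound the $(1-\lambda)$-power by $P_Hf(x)^{1-\lambda}$, while the paper bounds the $\lambda$-power (and then appears to swap exponents in the displayed chain, almost certainly a typo); since $\lambda\in[0,1]$ is arbitrary these are interchangeable, and your version lands directly on the stated inequality, so it is in fact the cleaner write-up of the same Fubini-type argument.
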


Replacing $f$ by the characteristic function of a convex set recovers \eqref{eq:upperbound}.
Again, also here we have another very different functional version, closer in spirit to the original geometric inequality, but with the section replaced by a projection:

\begin{thm}\label{th:upperboundexponent}
Let $f\in\mathcal F(\R^n)$, $H\in\mathcal L^n_i$. %, and $f(0)=\Vert f\Vert_\infty$.
Then
\[
\Vert f\Vert_\infty\int_{\R^n}f(z)dz\leq{n\choose i}\int_{H}P_Hf(x)dx\int_{H^\bot}P_{H^\bot}f(y)dy.
\]Equality holds if and only if $\frac{f(z)}{\Vert f\Vert_\infty}=\exp(-\Vert z-z_0\Vert_{K\times L})$ for some $z_0\in\R^n$ and some convex bodies $K\subseteq H$ and $L\subseteq H^\perp$ such that $0\in K\times L$.
\end{thm}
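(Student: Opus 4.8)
The plan is to reduce the statement to a one–dimensional inequality for the distribution functions of $P_Hf$ and $P_{H^\bot}f$, and then to prove that inequality by combining H\"older's inequality with Berwald's second inequality. First I would normalize: both sides are $2$-homogeneous in $f$, and both are unchanged if $f(z)$ is replaced by $f(z+z_0)$ for $z_0\in\R^n$ (a translation moves $P_Hf$ and $P_{H^\bot}f$ by the $H$- and $H^\bot$-components of $z_0$ but leaves the three integrals and $\|f\|_\infty$ unaffected). Since an integrable log-concave function attains its maximum, we may assume $\|f\|_\infty=1=f(0)$, so $P_Hf(0)=P_{H^\bot}f(0)=1$; we may also assume $\int_HP_Hf<\infty$ and $\int_{H^\bot}P_{H^\bot}f<\infty$, as otherwise the right-hand side is infinite. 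For $t\in(0,1]$ set $a(t):=\vol_i(\{P_Hf\ge t\})$ and $b(t):=\vol_{n-i}(\{P_{H^\bot}f\ge t\})$. Since $\{P_Hf\ge t\}=P_H(\{f\ge t\})$ and $\{P_{H^\bot}f\ge t\}=P_{H^\bot}(\{f\ge t\})$, the inclusion $\{f\ge t\}\subseteq P_H(\{f\ge t\})\times P_{H^\bot}(\{f\ge t\})$ gives $\vol_n(\{f\ge t\})\le a(t)b(t)$; integrating in $t$ (layer-cake) yields $\int_{\R^n}f\le\int_0^1a(t)b(t)\,dt$, while $\int_HP_Hf=\int_0^1a(t)\,dt$ and $\int_{H^\bot}P_{H^\bot}f=\int_0^1b(t)\,dt$. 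Hence it suffices to prove
\[
\int_0^1a(t)b(t)\,dt\le\binom ni\Bigl(\int_0^1a(t)\,dt\Bigr)\Bigl(\int_0^1b(t)\,dt\Bigr).
\]

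The crux is this one–dimensional inequality. Substituting $t=e^{-s}$ and writing $P_Hf=e^{-w}$, $P_{H^\bot}f=e^{-v}$ with $w,v$ convex and (by the normalization) $w(0)=v(0)=0=\min w=\min v$, one has $a(e^{-s})=\alpha(s)^i$ and $b(e^{-s})=\beta(s)^{n-i}$, where $\alpha(s):=\vol_i(\{w\le s\})^{1/i}$ and $\beta(s):=\vol_{n-i}(\{v\le s\})^{1/(n-i)}$; these are nonnegative and concave on $[0,\infty)$, since $\{w\le\frac{s_1+s_2}2\}\supseteq\frac12\{w\le s_1\}+\frac12\{w\le s_2\}$ and the Brunn--Minkowski inequality make $\alpha$ midpoint-concave (and likewise $\beta$). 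The inequality to prove becomes
\[
\int_0^\infty\alpha(s)^i\beta(s)^{n-i}e^{-s}\,ds\le\binom ni\Bigl(\int_0^\infty\alpha(s)^ie^{-s}\,ds\Bigr)\Bigl(\int_0^\infty\beta(s)^{n-i}e^{-s}\,ds\Bigr).
\]
I would obtain this in two steps. First, H\"older with exponents $\frac ni$ and $\frac n{n-i}$ bounds the left-hand side by $\bigl(\int_0^\infty\alpha^ne^{-s}\,ds\bigr)^{i/n}\bigl(\int_0^\infty\beta^ne^{-s}\,ds\bigr)^{(n-i)/n}$. Second, one uses Berwald's second inequality (see Section~\ref{sec:appendixBerwald}) in the following form, a limiting case of the classical statement: if $\phi\ge0$ is concave on $[0,\infty)$ and $0<p<q$, then $\bigl(\Gamma(q+1)^{-1}\int_0^\infty\phi^qe^{-s}\,ds\bigr)^{1/q}\le\bigl(\Gamma(p+1)^{-1}\int_0^\infty\phi^pe^{-s}\,ds\bigr)^{1/p}$ (apply the simplex version on $\Delta_k$ to $x\mapsto\phi(kx_1)$ and let $k\to\infty$, since $(1-s/k)_+^{k-1}\to e^{-s}$). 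Applying this with $(\phi,p,q)=(\alpha,i,n)$ and with $(\phi,p,q)=(\beta,n-i,n)$ and multiplying the two bounds produces exactly the constant $\Gamma(n+1)/(\Gamma(i+1)\Gamma(n-i+1))=\binom ni$. The real obstacle is this step, i.e.\ extracting the sharp constant $\binom ni$: running the Rogers--Shephard argument directly on the ``stacked'' sets $\{(x,t):x\in\{P_Hf\ge t\}\}$ fails because those sets are not convex, and the H\"older/Berwald combination appears to be the right substitute.

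Finally I would settle the equality case by tracing the equalities backwards. Equality in the theorem forces $\vol_n(\{f\ge t\})=a(t)b(t)$ for a.e.\ $t$, hence $\{f\ge t\}=\{P_Hf\ge t\}\times\{P_{H^\bot}f\ge t\}$, i.e.\ $f(x,y)=\min\{P_Hf(x),P_{H^\bot}f(y)\}$; equality in H\"older forces $\alpha$ and $\beta$ proportional; and equality in Berwald's inequality forces (via its equality case and the limiting procedure above) $\alpha$ and $\beta$ to be linear through the origin, so $\vol_i(\{w\le s\})$ equals a constant times $s^i$. Convexity of $w$ with $\min w=w(0)=0$ gives $\tfrac{s'}{s}\{w\le s\}\subseteq\{w\le s'\}$ for $s'\le s$, which must be an equality of volumes, hence of sets; thus $\{w\le s\}=sK$ with $K:=\{w\le1\}$ a convex body in $H$ and $0\in K$, i.e.\ $w=\|\cdot\|_K$, and likewise $v=\|\cdot\|_L$ with $L\subseteq H^\bot$ a convex body, $0\in L$. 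Therefore $f(x,y)=\|f\|_\infty\exp(-\max\{\|x\|_K,\|y\|_L\})=\|f\|_\infty\exp(-\|(x,y)\|_{K\times L})$, and undoing the initial translation gives $f(z)=\|f\|_\infty\exp(-\|z-z_0\|_{K\times L})$. The converse is immediate: for such $f$ the product inclusion, H\"older, and Berwald are all equalities.
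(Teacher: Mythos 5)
Your proof is correct, and it is a genuinely different route from the paper's. The paper first proves Lemma~\ref{lem:MinUpperBound} (an upper bound for $\int\min\{f/\|f\|_\infty,g/\|g\|_\infty\}$) by introducing, at each level $t$, the \emph{convex} set $C_t=\{(x,y):f(x)g(y)\ge t\|f\|_\infty\|g\|_\infty\}$ and applying the geometric Rogers--Shephard inequality \eqref{eq:volumesplit} to it; Theorem~\ref{th:upperboundexponent} then falls out of the pointwise bound $f(x,y)\le\min\{P_Hf(x),P_{H^\bot}f(y)\}$. You instead reduce to a scalar inequality $\int_0^1 a(t)b(t)\,dt\le\binom{n}{i}\int_0^1 a\,\int_0^1 b$ for the level-set volume functions, change variables $t=e^{-s}$ to turn $a,b$ into $i$-th and $(n-i)$-th powers of concave functions, and close with H\"older followed by a weighted Berwald-type inequality. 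That Berwald step is in fact available inside the paper: your ``limiting case'' claim is exactly the $m=2$, one-dimensional (i.e.\ $L=[0,\infty)$ with weight $e^{-s}$) instance of Theorem~\ref{thm:BerwaldExponential}, whose proof carries over verbatim, so you need not re-derive it via a simplex limit. Each route has its merits: the paper's argument stays entirely geometric and exploits the convexity of $C_t$ (so your remark that ``the Rogers--Shephard argument fails'' is a bit hasty — it fails on your stacked sets but succeeds on $C_t$), whereas your reduction is more modular and makes transparent why the constant $\binom ni$ emerges, by transferring the Berwald/Favard machinery that the paper deploys only for the Loomis--Whitney results in Sections~\ref{sec:FunctionalWeigthedBerwald}--\ref{sec:LW}. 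One minor gap: the paper does not state an equality case for Lemma~\ref{lem:BerwaldExponentialDensity} or Theorem~\ref{thm:BerwaldExponential}, so to complete your equality analysis you would need to extract it from that proof (equality forces $I_h\equiv I_{\bar h}$, which for a nondecreasing concave $\alpha$ forces $\alpha$ linear through the origin) — the conclusion you reach is correct, but the cited tool does not hand it to you directly.
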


%n the above inequality for the particular choice of $f=\exp(-\Vert \cdot\Vert_K)$, for some $K\in\mathcal K^n$, we obtain a weaker inequality than \eqref{eq:upperbound}, in which the volume of the section is bounded by the volume of the projection.

Several recent publications take care of translating classical results of convex geometry to log-concave functions (cf.~\cite{Col2,MMX}).
Indeed, most of our functional results here extend their geometrical counterparts through the natural injections
\[
\begin{array}{ccc}\mathcal K^n & \hookrightarrow & \mathcal F(\R^n) \\
K & \rightsquigarrow & \left\{\begin{array}{c}\chi_K\\\exp(-\Vert\cdot\Vert_K).\end{array}\right.
\end{array}
\]

Within the light of these injections, given $f=\exp(-u)\in\mathcal F(\R^n)$ and $H\in\mathcal L^n_i$,
the projection $P_Hf$ is driven from the orthogonal projection applied to the epi-graph of the convex function $u$. Namely, if $\{e_1,\dots,e_{n+1}\}$ are the canonical vectors of $\R^{n+1}$, and we let
${\rm epi}(u) = \{(x,t)\in\R^{n+1}: u(x)\leq t\}$ and $\overline{H}:=\lin\{H,e_{n+1}\}$, we have that
\[  {\rm epi}(\tilde{u})  = P_{\overline{H}} ( {\rm epi}(u) ) \subset H\times \R, \]
for some $\tilde{u}:H\to\R$ convex, then we have that $P_H f = \exp (-\tilde{u})$.

Therefore, the functional inequalities above are actually inequalities about projections and sections of a convex epi-graph, where the measure is not the usual volume (this would be infinite since the epi-graph is unbounded) but with respect to the weight function $\exp (-t)$ where $t = \iprod{x}{e_{n+1}}$ is the last coordinate. What is more important, is that the projection and the section now have a common 1-dimensional subspace, which is $\{0\}^n\times\R$.

This created a new situation, and  led us to investigate what happens to the volumetric information in the original geometric inequalities \eqref{eq:upperbound} and \eqref{eq:volumesplit}, in the more general case where the subspaces intersect (orthogonally). More precisely, we consider
 sections and
projections of $K$ for two intersecting subspaces of the form
$E\in\mathcal L^n_i$, $H\in\mathcal L^n_j$, $i+j\geq n+1$, $E^\bot\subseteq H$.

These kind of questions give rise to inequalities which somewhat resemble the classical Loomis Whitney inequality. In some literature they are called
``local Loomis-Whitney type inequalities'' (cf.~\cite[Pg.~2]{BGL}). In this regard, we show the following sharp inequalities.%, which can be considered as the
%main contribution of our paper.

\begin{thm}\label{th:localLoomisWhitneyReverse}
	Let $K\in\K^n$, $E\in\L^n_i$, $H\in\L^n_j$ be such that $i,j\in\{2,\dots,n-1\}$, $i+j\geq n+1$, and $E^{\bot}\subset H$.
    Denote $k = i+j-n$, so that $1\le k\le n-2$.
	Then for any $z\in \R^n$
	\[
	\vol_i(P_EK) \vol_j(K\cap (z+H)) \le {n-k\choose n-i}  \vol_n(K) \max_{x\in \R^n} \vol_{k} (P_E(K\cap(x+H)))    .
	\]
	Moreover, equality holds if and only if  there exist two convex bodies $K_1\subseteq E\cap H$, $K_2\subseteq E^\bot$ such that $S_E(K\cap H)= K_1+ K_2$ and for every $v\in H^\bot$, $K\cap(H+\R^+v)$ is the convex hull of $K\cap H$ and one point. % \textcolor{blue}{for every $x\in H^\bot$ and any $z\in E\cap H$ then
%$K\cap(x+z+E^\bot)$ is a translation of the same body independent of $z\in E\cap H$, and is a rescalation
%of $K\cap(z+E^\bot)$ only depending on the distance to $H$ over each ray $\R^+x$}.
Here $S_E$ denotes the symmetral  with respect to the subspace $E$, and is formally defined in Section 2.
\end{thm}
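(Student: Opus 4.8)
The strategy is to deduce the inequality from the functional Rogers--Shephard inequality, Theorem~\ref{th:splittRS}, applied in the lower-dimensional space obtained by ``quotienting out'' the common subspace $E\cap H$, by packaging $K$ into a single auxiliary log-concave function.

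\emph{Set-up.} Since $E^\perp\subset H$, the subspaces $F_1:=H^\perp$, $F_2:=E\cap H$ and $F_3:=E^\perp$ are mutually orthogonal, span $\R^n$, and satisfy $\dim F_1=n-j$, $\dim F_2=k$, $\dim F_3=n-i$, $E=F_1\oplus F_2$, $H=F_2\oplus F_3$. For $u\in F_1$ write $K_u:=\{(v,w)\in F_2\oplus F_3:(u,v,w)\in K\}\subset H$ for the slice of $K$ over $u$. Fubini's theorem gives $\vol_n(K)=\int_{F_1}\vol_j(K_u)\,du$ and $\vol_i(P_EK)=\int_{F_1}\vol_k(P_{F_2}K_u)\,du$, while $\vol_j(K\cap(z+H))=\vol_j(K_z)$ and $\vol_k(P_E(K\cap(x+H)))=\vol_k(P_{F_2}K_x)$, so that $A:=\max_x\vol_k(P_E(K\cap(x+H)))=\max_u\vol_k(P_{F_2}K_u)$; by Brunn--Minkowski, $u\mapsto\vol_k(P_{F_2}K_u)^{1/k}$ and $u\mapsto\vol_j(K_u)^{1/j}$ are concave on $P_{F_1}K$.

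\emph{Reduction to Theorem~\ref{th:splittRS}.} Fix the translate $z$ and let $\hat W\subset F_1\oplus F_3$ be a convex body, projecting onto $P_{F_1}K-z$ along $F_3$, whose $F_3$-sections satisfy $\vol_{n-i}\bigl(\hat W\cap(u+F_3)\bigr)=\vol_j(K_{u+z})/\vol_k(P_{F_2}K_{u+z})$. Define
\[
f(u,w):=\vol_k\bigl(P_{F_2}K_{u+z}\bigr)\cdot\chi_{\hat W}(u,w),\qquad (u,w)\in F_1\oplus F_3 .
\]
Then: (i) $f$ is log-concave, as the product of the log-concave function $u\mapsto\vol_k(P_{F_2}K_{u+z})$ with the characteristic function of a convex body; (ii) $\int_{F_1\oplus F_3}f=\int_{F_1}\vol_j(K_{u+z})\,du=\vol_n(K)$; (iii) $\|f\|_\infty=\max_u\vol_k(P_{F_2}K_u)=A$; (iv) $P_{F_1}f(u)=\vol_k(P_{F_2}K_{u+z})$, whence $\int_{F_1}P_{F_1}f=\vol_i(P_EK)$, and $\int_{F_3}f(0,w)\,dw=\vol_j(K_z)=\vol_j(K\cap(z+H))$. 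Applying Theorem~\ref{th:splittRS} to $f$ with the subspace $F_1\in\L^{n-k}_{n-j}$, and using $\binom{n-k}{n-j}=\binom{n-k}{n-i}$ (because $(n-k)-(n-j)=n-i$), yields exactly
\[
\vol_i(P_EK)\,\vol_j(K\cap(z+H))\ \le\ \binom{n-k}{n-i}\,\|f\|_\infty\int f\ =\ \binom{n-k}{n-i}\,A\,\vol_n(K).
\]

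\emph{Main obstacle and equality.} The delicate point is the very existence of $\hat W$, i.e.\ the concavity on $P_{F_1}K$ of
$u\mapsto\bigl(\vol_j(K_u)/\vol_k(P_{F_2}K_u)\bigr)^{1/(n-i)}$;
this is where convexity of $K$ must be exploited beyond the easy concavities above, and I would prove it by a Pr\'ekopa--Leindler/Borell-type argument after writing $\vol_j(K_u)/\vol_k(P_{F_2}K_u)$ as the average, over the $F_2$-slices of $P_EK$, of the $(n-i)$-th power of the jointly concave function $(u,v)\mapsto\vol_{n-i}(K_u\cap(v+F_3))^{1/(n-i)}$. For the equality case one transports back the equality characterisation of Theorem~\ref{th:splittRS}: equality forces $f/\|f\|_\infty$ to be a characteristic function, which in turn forces $\hat W$ to be a cylinder in the $F_3$-directions and $\vol_k(P_{F_2}K_u)$ to be constant along the fibres --- this is precisely $S_E(K\cap H)=K_1+K_2$ with $K_1\subset E\cap H$, $K_2\subset E^\perp$ --- together with the Rogers--Shephard extremal structure in the $H^\perp$-directions, namely that $K\cap(H+\R^+v)$ is the convex hull of $K\cap H$ and one point for every $v\in H^\perp$.
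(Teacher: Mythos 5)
Your reduction to Theorem~\ref{th:splittRS} is organized correctly once the auxiliary function $f$ is in hand, and the binomial bookkeeping $\binom{n-k}{n-j}=\binom{n-k}{n-i}$ checks out. The trouble is exactly where you flag it: the existence of the convex body $\hat W\subset F_1\oplus F_3$ with $\vol_{n-i}\bigl(\hat W\cap(u+F_3)\bigr)=\vol_j(K_{u+z})/\vol_k(P_{F_2}K_{u+z})$. By Brunn's theorem, such a convex $\hat W$ exists if and only if
\[
u\ \longmapsto\ R(u)^{1/(n-i)},\qquad R(u):=\frac{\vol_j(K_u)}{\vol_k(P_{F_2}K_u)},
\]
is concave on $P_{F_1}K$, and that concavity is \emph{false} in general, so no Pr\'ekopa--Leindler/Borell argument can establish it. Here is a counterexample in the smallest admissible setting with $n-i\ge2$. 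Take $n=4$, $E=\lin\{e_1,e_2\}$ ($i=2$), $H=\lin\{e_2,e_3,e_4\}$ ($j=3$); then $F_1=H^\perp=\lin\{e_1\}$, $F_2=E\cap H=\lin\{e_2\}$, $F_3=E^\perp=\lin\{e_3,e_4\}$, $k=1$, $n-i=2$. Let $B\subset F_3$ be the unit disc, set
\[
K_0:=\{(v,w)\in[0,1]\times F_3:\ \Vert w\Vert_2\le 1-v\},\qquad
K_1:=\{(v,w)\in[0,1]\times F_3:\ \Vert w\Vert_2\le v\},
\]
two opposite cones in $F_2\oplus F_3\cong\R^3$, and put $K:=\conv\bigl((\{0\}\times K_0)\cup(\{1\}\times K_1)\bigr)$, so that $K_u=(1-u)K_0+uK_1$ for $u\in[0,1]$. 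Then $P_{F_2}K_u=[0,1]$ for every $u$, so $\vol_1(P_{F_2}K_u)\equiv1$; on the other hand $\vol_3(K_0)=\vol_3(K_1)=\pi/3$ while $K_{1/2}=[0,1]\times\tfrac12B$ gives $\vol_3(K_{1/2})=\pi/4$. Thus $R(0)^{1/2}=R(1)^{1/2}=\sqrt{\pi/3}$ but $R(1/2)^{1/2}=\sqrt{\pi/4}<\sqrt{\pi/3}=\tfrac12\bigl(R(0)^{1/2}+R(1)^{1/2}\bigr)$, so $R^{1/(n-i)}$ is not concave, no convex $\hat W$ exists, and your $f$ is not log-concave. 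The argument therefore cannot be completed as written.

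For comparison, the paper sidesteps the issue by choosing a different auxiliary function. Its Lemma~\ref{RSwithIntersection} applies Theorem~\ref{th:splittRS} to $f(x)=\vol_k\bigl(K\cap(x+E\cap H)\bigr)$ on $(E\cap H)^\perp$, whose $1/k$-concavity \emph{is} a genuine consequence of Brunn's theorem, yielding the section version of the inequality; the projection $P_EK$ then enters not via an auxiliary body $\hat W$ but by applying that lemma to the Schwarz symmetral $S_EK$, which satisfies $S_EK\cap E=P_EK$ while preserving $\vol_n$, $\vol_j(\cdot\cap H)$ and the maximal $k$-section. If you want to rescue your scheme you are effectively forced back to a section-volume function plus a symmetrization step, because the ratio function $R$ simply does not have the concavity your construction requires.
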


\begin{thm}\label{th:localLoomisWhitney}
	Let $K\in\mathcal{K}^n$, $E\in\mathcal{L}^n_i$, $H\in\mathcal{L}^n_j$ be such that $i,j\in\{2,\dots,n-1\}$, $i+j\geq n+1$,
%$0\in K$,
and $E^{\bot}\subset H$. Denote $k = i+j-n$, so that $1\leq k\leq n-2$. Then
	\begin{equation}\label{dirLocLW}
	\vol_{k}(P_{E\cap H}K) \vol_n(K)\leq\frac{{i\choose k}{j\choose k}}{{n\choose k}}\vol_{i}(P_EK)\vol_{j}(P_HK).
	\end{equation}
	Equality holds if and only if there exist $K_1\subseteq H^\bot$, $K_2\subseteq E^\bot$, $x_0\in P_{E\cap H}K$ such that
	for every $v\in E\cap H$, $K\cap(x_0+(E\cap H)^\bot+\R^+v)$ is the convex hull of $x_0+(K_1+K_2)$ and a unique point.
\end{thm}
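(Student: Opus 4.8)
The plan is to slice $K$ by the affine subspaces parallel to $(E\cap H)^{\perp}$, reduce \eqref{dirLocLW} to an integral inequality over $P_{E\cap H}K$, and close that estimate with Berwald's second inequality (see Section~\ref{sec:appendixBerwald}). First I would fix coordinates: since $E^{\perp}\subset H$ one has $H^{\perp}\subset E$, $E+H=\R^n$ and $\dim(E\cap H)=i+j-n=k$, so writing $F=E\cap H$, $A=E^{\perp}$, $B=H^{\perp}$ we get an orthogonal splitting $\R^n=F\oplus A\oplus B$ with $\dim F=k$, $\dim A=a:=n-i$, $\dim B=b:=n-j$, and moreover $E=F\oplus B$, $H=F\oplus A$, so that $P_{E\cap H}=P_F$, $P_E=P_{F\oplus B}$, $P_H=P_{F\oplus A}$ and $k+b=i$, $k+a=j$, $k+a+b=n$. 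Assuming $K$ is $n$-dimensional, for $u\in\Omega:=P_FK$ set $K_u=\{(v,w)\in A\times B:(u,v,w)\in K\}$; the $u$-slice of $P_EK$ is $P_BK_u$ and that of $P_HK$ is $P_AK_u$, so by Fubini
\[
\vol_n(K)=\int_\Omega\vol_{a+b}(K_u)\,du,\qquad \vol_i(P_EK)=\int_\Omega g\,du,\qquad \vol_j(P_HK)=\int_\Omega h\,du,
\]
where $g(u):=\vol_b(P_BK_u)$ and $h(u):=\vol_a(P_AK_u)$.

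Two reductions then do the work. Since $K_u\subseteq P_AK_u\times P_BK_u$ we have the pointwise bound $\vol_{a+b}(K_u)\le g(u)h(u)$; and convexity of $K$ gives $K_{(1-\lambda)u_0+\lambda u_1}\supseteq(1-\lambda)K_{u_0}+\lambda K_{u_1}$, hence the same inclusion after applying the linear maps $P_A,P_B$, so by the Brunn--Minkowski inequality $\psi:=g^{1/b}$ and $\eta:=h^{1/a}$ are nonnegative concave functions on the convex body $\Omega\subset F$ (if either vanishes identically $K$ is degenerate). It thus suffices to bound $\vol_k(\Omega)\int_\Omega\psi^{b}\eta^{a}$: I would first use H\"older's inequality with exponents $\tfrac{a+b}{b}$ and $\tfrac{a+b}{a}$ to replace $\int_\Omega\psi^{b}\eta^{a}$ by $\bigl(\int_\Omega\psi^{a+b}\bigr)^{b/(a+b)}\bigl(\int_\Omega\eta^{a+b}\bigr)^{a/(a+b)}$, and then apply Berwald's second inequality to the concave $\psi$ with the exponents $b<a+b$ (and to $\eta$ with $a<a+b$) to trade $\int_\Omega\psi^{a+b}$ for a power of $\int_\Omega\psi^{b}$ and $\int_\Omega\eta^{a+b}$ for a power of $\int_\Omega\eta^{a}$, at the cost of an explicit ratio of binomial coefficients. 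Multiplying out, the powers of $\vol_k(\Omega)$ and of $\binom nk=\binom{k+a+b}k$ collect with total exponent $1$, and recalling $\int_\Omega\psi^{b}\,du=\vol_i(P_EK)$, $\int_\Omega\eta^{a}\,du=\vol_j(P_HK)$, $\binom{k+b}k=\binom ik$, $\binom{k+a}k=\binom jk$, one lands exactly on \eqref{dirLocLW}.

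For equality I would trace back the three steps. Equality forces simultaneously: (i) $\vol_{a+b}(K_u)=\vol_a(P_AK_u)\vol_b(P_BK_u)$ for a.e.\ $u$, i.e.\ $K_u=P_AK_u\times P_BK_u$ is a product in $A\times B$; (ii) equality in H\"older, i.e.\ $\psi$ and $\eta$ are proportional; (iii) equality in Berwald's inequality for $\psi$ (equivalently $\eta$), whose equality case forces $\psi(u)=\psi(x_0)\bigl(1-\|u-x_0\|_{\Omega-x_0}\bigr)$ for some $x_0\in\Omega$. Then $g^{1/b}$ is affine along every segment from $x_0$ to $\bd\Omega$ and vanishes at its far endpoint, so running the equality case of Brunn--Minkowski along these segments forces each $P_BK_u$ to be a homothet of $P_BK_{x_0}$ of ratio $1-\|u-x_0\|_{\Omega-x_0}$, and likewise each $P_AK_u$; together with (i) this makes $K_u$ a homothet of the product $K_{x_0}=K_1+K_2$ with $K_1:=P_BK_{x_0}\subseteq H^{\perp}$, $K_2:=P_AK_{x_0}\subseteq E^{\perp}$, shrinking linearly to a single point along each ray $x_0+\R^+v$, $v\in E\cap H$ --- which is precisely the stated condition; the converse is a direct verification. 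The hard part is exactly this last step: extracting from the equality cases of Berwald's and of Brunn--Minkowski's inequalities the rigid ``cone over a product'' shape of $K$ in the directions of $E\cap H$. Everything preceding it --- Fubini, the product inclusion, Brunn--Minkowski, H\"older --- is routine.
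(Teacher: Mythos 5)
Your proposal is correct and follows essentially the same route as the paper: slice along $(E\cap H)^\perp$, bound each slice by the product of its shadows in $E^\perp$ and $H^\perp$, observe via Brunn--Minkowski that the resulting functions are concave powers, and close with Berwald on $P_{E\cap H}K$. The only structural difference is that you inline the two-function Berwald inequality (Theorem~\ref{berwald}) as ``H\"older with exponents $\tfrac{a+b}{b},\tfrac{a+b}{a}$ followed by the one-function moment comparison of Theorem~\ref{nvariable},'' which is exactly how the paper proves Theorem~\ref{berwald} anyway, and the equality analysis (roof functions plus Brunn--Minkowski rigidity) matches the paper's.
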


In Section \ref{sec:LW} we recall the classical Loomis-Whitney type inequalities, and the known ``local'' results from
\cite{BGL} \cite{GiHaPa} and also \cite{SoZv}, and compare them with our results above. Here we only mention that, on the one hand,
Theorem \ref{th:localLoomisWhitneyReverse} improves  upon \cite[Theorem 1.3]{BGL}, in the particular case of two subspaces.
%\textcolor{red}{we are considering here}\textcolor{red}{are we sure???? In which sense???}. %(using their notation) .
On the other hand, the case $i = j$ of Theorem \ref{th:localLoomisWhitney} is included in
\cite[Theorem 1.2]{BGL}, and  for $i\neq j$  the constant in our result is slightly better (and sharp). We borrow the idea of using Berwald's inequality in the proof; a version of Theorem \ref{RSwithIntersection} with non-sharp constants appeared in \cite{GiHaPa}.
The novelty here regarding Berwald's inequality lies in the fact that we use, unlike the other authors, \cite[Satz 8]{Ber}. It seems that the literature does not cover a proof in English of that result. For the sake of completeness, we will include the proof of the result we need in the Appendix in Section \ref{sec:appendixBerwald}.
In this direction, we would also like to point out that there exists a reverse Berwald's inequality (cf.~Corollary 3 in \cite{BN}).

As explained above, moving to the functional realm allows us to extend our understanding and gain new insight into the original geometric notions. It is therefore natural to extend further, and find the
 functional analogues of Theorems \ref{th:localLoomisWhitneyReverse} and \ref{th:localLoomisWhitney}. Indeed, we are able to do this, and we prove the following two theorems to this effect.

\begin{thm}\label{th:locLWFunctProjSect}
Let $f\in\mathcal{F}(\R^n)$ and let $H\in\mathcal{L}^n_i$ and $E\in\mathcal{L}^n_j$ be such that $i,j\in\{2,\dots,n-1\}$, $i+j\geq n+1$, and $H^\bot\subset E$.  Denote $k = i+j-n$, so that $1\le k\le n-2$. Then
\[
\int_EP_Ef(x)dx\,\int_Hf(y)dy \leq {n-k\choose n-i}\max_{x_0\in H^\bot}\int_{x_0+E\cap H}P_Ef(w)dw\,\int_{\R^n}f(z)dz.
\]
\end{thm}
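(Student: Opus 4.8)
The plan is to follow the scheme of the geometric Theorem~\ref{th:localLoomisWhitneyReverse}, with the classical Rogers--Shephard inequality replaced by its functional form, Theorem~\ref{th:splittRS}, and with Berwald's inequality from Section~\ref{sec:appendixBerwald} used to recover the sharp constant.

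First I would fix the orthogonal decomposition $\R^n=E^\perp\oplus(E\cap H)\oplus H^\perp$, which is legitimate because $H^\perp\subset E$ forces $E=(E\cap H)\oplus H^\perp$, and hence also $H=(E\cap H)\oplus E^\perp$; the three summands have dimensions $n-j$, $k=i+j-n$ and $n-i$. Writing a point as $z=(a,b,c)$ with $a\in E^\perp$, $b\in E\cap H$, $c\in H^\perp$, one has $P_Ef(b,c)=\max_{a}f(a,b,c)$, so by Fubini
\[
\int_EP_Ef=\int_{H^\perp}m(c)\,dc,\qquad m(c):=\int_{E\cap H}P_Ef(b,c)\,db,
\]
where $m$, being the marginal of the log-concave function $P_Ef$ onto $H^\perp$, is log-concave; moreover $\max_{x_0\in H^\perp}\int_{x_0+E\cap H}P_Ef=\|m\|_\infty$. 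Writing $\nu(c):=\int_{H}f(\,\cdot\,,c)$ for the mass of the slice $c+H$, we also have $\int_Hf=\nu(0)$ and $\int_{\R^n}f=\int_{H^\perp}\nu$. Thus the asserted inequality is equivalent to
\[
\Bigl(\int_{H^\perp}m\Bigr)\nu(0)\ \le\ \binom{n-k}{n-i}\,\|m\|_\infty\int_{H^\perp}\nu .
\]
Since a translation of $f$ by a vector of $H^\perp$ only translates $\nu$ and leaves $m$, $\|m\|_\infty$, $\int_{H^\perp}m$ and $\int_{H^\perp}\nu$ unchanged, we may assume $\nu(0)=\|\nu\|_\infty$, so it suffices to prove $\int_{H^\perp}m\le\binom{n-k}{n-i}\,\|m\|_\infty\,\bigl(\int_{H^\perp}\nu\bigr)\big/\|\nu\|_\infty$.

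The core of the argument is then a \emph{local} functional Rogers--Shephard estimate relating $m$ and $\nu$. The guiding dimensional identity is $n-k=(n-i)+(n-j)=\dim H^\perp+\dim E^\perp=\dim\bigl((E\cap H)^\perp\bigr)$, so $\binom{n-k}{n-i}$ is exactly the Rogers--Shephard constant for the $(n-k)$-dimensional coordinate subspace $W:=(E\cap H)^\perp=E^\perp\oplus H^\perp$ equipped with the $(n-i)$-dimensional subspace $H^\perp$ (whose orthocomplement inside $W$ is $E^\perp$). Accordingly I would apply Theorem~\ref{th:splittRS} in $W$ to the log-concave marginal of $f$ onto $W$: its integral over $W$ is $\int_{\R^n}f$, and its integral over $E^\perp$ through the origin is exactly $\int_Hf=\nu(0)$, so this single application already produces the constant $\binom{n-k}{n-i}$ together with the ``section through the origin'' and ``total mass'' terms of the right-hand side.

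The main obstacle is that $\int_EP_Ef=\int_{H^\perp}m$ is built from a \emph{supremum} over $E^\perp$, whereas the term produced by Theorem~\ref{th:splittRS} applied to a marginal involves a Lebesgue integration over $E^\perp$; because $\sup$ and $\int$ do not commute, the naive comparison runs the wrong way, and a crude fix yields only a worse binomial coefficient (of the type $\binom nj$ or $\binom ni$, both of which exceed $\binom{n-k}{n-i}$). Closing this gap is the crux, and it is here that Berwald's second inequality is needed: applying it to the concave level sets of the $E^\perp$-fibres of $f$ one controls $m$ against the corresponding fibre integrals, the exponent $\dim E^\perp=n-j$ (relative to the $(n-i)$-dimensional variable $c$) being precisely what upgrades the constant to the sharp value $\binom{n-k}{n-i}$. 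Equivalently one may argue by the layer-cake formula, applying the geometric Theorem~\ref{th:localLoomisWhitneyReverse} to each super-level set $\{f\ge t\}$ and then carefully interchanging the $t$-integration with the maximum over $x_0$; in either route the delicate point is to keep the bookkeeping over the threefold decomposition tight enough that no constant is lost. (No equality case is asserted, so this last step need not be tracked sharply.)
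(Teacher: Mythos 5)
You set up the orthogonal decomposition correctly, reduce the target to the clean inequality $\bigl(\int_{H^\perp}m\bigr)\nu(0)\le\binom{n-k}{n-i}\|m\|_\infty\int_{H^\perp}\nu$, and correctly identify the obstacle: applying Theorem~\ref{th:splittRS} to the marginal $F(a,c)=\int_{E\cap H}f(a,b,c)\,db$ of $f$ onto $W=(E\cap H)^\perp$ produces $P_{H^\perp}F(c)=\max_{a}\int_{E\cap H}f$ and $\|F\|_\infty=\max_{a,c}\int_{E\cap H}f$, whereas your $m(c)$ and $\|m\|_\infty$ involve $\int_{E\cap H}\max_a f$, and the inequality $\max_a\int\le\int\max_a$ works \emph{against} you on the left side while helping on the right, so nothing can be concluded. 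However, the cure you prescribe is the wrong one. Berwald's inequality controls products of powers of concave functions against a common measure; it is not a tool for interchanging a supremum with an integral, and there is no evident way to deploy it here as you sketch. The layer-cake alternative also fails as stated: the left-hand side of Theorem~\ref{th:localLoomisWhitneyReverse} is a \emph{product} $\vol(P_EK_t)\,\vol(K_t\cap H)$ of two volumes, and $\int_0^\infty \vol(P_EK_t)\,\vol(K_t\cap H)\,dt$ does not factor into $\int_EP_Ef\cdot\int_Hf$.

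The missing idea is Schwarz symmetrization with respect to $E$. Replace $f$ by $\tilde f := S_E f$. By the properties recalled in Section~\ref{sec:Estimates by marginals}: $\tilde f|_E = P_E f$ (equation \eqref{eq:projequalssymm}), $\int_{\R^n}\tilde f = \int_{\R^n}f$ (equation \eqref{eq:integsymm}), and, since $E^\perp\subset H$ and $S_E$ preserves integrals along every fiber $x+E^\perp$ (equation \eqref{eq:integsymmsubspace}), also $\int_H\tilde f = \int_H f$. Because $\tilde f$ is even and non-increasing along each $E^\perp$-fiber, the supremum over $a\in E^\perp$ is attained at $a=0$ and commutes with the integral over $E\cap H$; consequently, for the marginal $F_{\tilde f}$ of $\tilde f$ onto $W$ one has $P_{H^\perp}F_{\tilde f}(c)=F_{\tilde f}(0,c)=m(c)$ and $\|F_{\tilde f}\|_\infty=\|m\|_\infty$, while the section and total-mass terms are unchanged. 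Thus after symmetrization all four terms produced by Theorem~\ref{th:splittRS} applied to $F_{\tilde f}$ in $W$ match exactly the four terms of your reduced inequality, with the sharp constant $\binom{n-k}{n-i}$. The paper packages this as the ``section'' version, Lemma~\ref{lem:locLWFunctRev}, followed by an application to $S_E f$ in the proof of the theorem; the content is the same. Berwald's inequality is used in this paper only for the \emph{direct} local Loomis--Whitney inequalities, Theorems~\ref{th:localLoomisWhitney} and~\ref{thm: FunctionalLW}, not for this reverse one.
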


\begin{thm}\label{thm: FunctionalLW}
Let $f\in\F(\R^n)$ and let $E\in\mathcal{L}^n_i$ and $H\in\mathcal{L}^n_j$ be such that $i,j\in\{2,\dots,n-1\}$, $i+j\geq n+1$, and $E^{\bot}\subset H$.  Denote $k = i+j-n$, so that $1\le k\le n-2$. Then
$$
\int_{E\cap H} P_{E\cap H}f(w)dw\,\int_{\R^n}f(z)dz\leq{n-k\choose n-i}\int_{E}P_Ef(x)dx\,\int_{H}P_Hf(y)dy.
$$
\end{thm}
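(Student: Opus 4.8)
The plan is to deduce this functional Loomis–Whitney inequality from its geometric counterpart, Theorem \ref{th:localLoomisWhitney}, by lifting $f$ to the epigraph of its potential, as explained in the introduction. Write $f = e^{-u}$ with $u:\R^n\to(-\infty,\infty]$ convex, and consider $\epi(u)\subset\R^{n+1}=\R^n\times\R$. Since $f$ is integrable it is bounded; after translating and multiplying by a constant (both operations leave all four integrals scaled by the same factor, hence preserve the inequality) we may assume $\|f\|_\infty = 1$, and by an approximation argument we may assume $u$ is finite on an open set and $\epi(u)$ meets every relevant behavior nicely; in fact the cleanest route is to truncate and replace $\epi(u)$ by the convex body $K_N = \epi(u)\cap\{t\le N\}$, prove the inequality for $K_N$ via Theorem \ref{th:localLoomisWhitney}, and let $N\to\infty$. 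The key dictionary, recorded in the introduction, is that for a subspace $G\in\L^n_\ell$, writing $\overline G = \lin\{G,e_{n+1}\}\in\L^{n+1}_{\ell+1}$, one has $P_{\overline G}(\epi(u)) = \epi(\widetilde u)$ where $\exp(-\widetilde u) = P_G f$, so that
\[
\int_G P_G f(x)\,dx = \int_{\overline G}\exp(-\iprod{z}{e_{n+1}})\,\chi_{P_{\overline G}(\epi(u))}(z)\,dz,
\]
and similarly $\int_{\R^n} f = \int_{\R^{n+1}}e^{-t}\chi_{\epi(u)}$; the subtle point being that the section of $\epi(u)$ by $\R^n\times\{t\}$ has the same weighted measure as the projection, because the weight $e^{-t}$ depends only on the last coordinate.

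The main step is then to run the proof of Theorem \ref{th:localLoomisWhitney} in $\R^{n+1}$ with the subspaces $\overline E = \lin\{E,e_{n+1}\}\in\L^{n+1}_{i+1}$ and $\overline H = \lin\{H,e_{n+1}\}\in\L^{n+1}_{j+1}$, keeping track of the weight $e^{-t}$. Note $\overline E^\bot = E^\bot\subset H\subset\overline H$, that $\dim\overline E + \dim\overline H = (i+1)+(j+1) = (n+1)+(k+1)$ so the new "$k$" is $k+1$, and $\overline E\cap\overline H = \lin\{E\cap H, e_{n+1}\}$. The binomial constant $\binom{(n+1)-(k+1)}{(n+1)-(i+1)} = \binom{n-k}{n-i}$ is exactly the one claimed, which is the reason the statement has this constant rather than the symmetric-looking ratio appearing in \eqref{dirLocLW}. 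Concretely, I expect the argument to go: first apply the reverse Rogers–Shephard splitting of Theorem \ref{th:localLoomisWhitneyReverse} (or rather its proof) in the $e_{n+1}$-direction to compare $\int_{\R^{n+1}}e^{-t}\chi_{\epi(u)}$ with $\int_{\overline E}e^{-t}\chi_{P_{\overline E}\epi(u)}$ and a maximal slice, and then combine with a Fubini-type bound in the complementary directions; since the weight factors through the last coordinate, which lies in all the subspaces $\overline E,\overline H,\overline E\cap\overline H$, every integral over a subspace containing $e_{n+1}$ splits cleanly as (a volume in the orthogonal complement inside $\R^n$) times $\int_{\R}e^{-t}\,dt$ on the fibers, and the $\int e^{-t}$ factors cancel between the two sides.

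The hard part will be making the weighted version of the geometric proof honest: the proof of Theorem \ref{th:localLoomisWhitney} uses Berwald's inequality and a Rogers–Shephard-type splitting, and one must check that the weight $e^{-t}$ (which is log-concave in $t$) is compatible with each of those steps — Berwald's inequality is about ratios of integrals of powers of a concave function over a convex body and does go through against a log-concave weight, but the bookkeeping of which integral picks up which power of the weight is where errors creep in. An alternative, and perhaps safer, route is to avoid re-deriving everything and instead obtain Theorem \ref{thm: FunctionalLW} purely functionally by iterating the two-subspace splitting inequality of Theorem \ref{th:splittRS} together with Theorem \ref{th:upperboundexponent}: apply the latter inside the subspace $E$ to split $P_Ef$ across $E\cap H$ and a complement, apply it again inside $H$, and then one reverse inequality (Theorem \ref{th:locLWFunctProjSect}, already proved) to close the loop, chasing the constants so that the product $\binom{i}{k}\binom{j}{k}/\binom{n}{k}$ of the geometric case collapses to $\binom{n-k}{n-i}$ after the lift. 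Either way, the crux is the constant bookkeeping; the inequality itself follows formally from results already in hand once the epigraph dictionary is set up.
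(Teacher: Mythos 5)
Your first plan is essentially the paper's route: lift $f$ to the convex set $C=\{(x,t)\in\R^n\times[0,\infty):f(x)\ge e^{-t}\Vert f\Vert_\infty\}$ (a shifted, truncated epigraph), observe that for any $G\in\L^n_\ell$ with $\overline G=\lin\{G,e_{n+1}\}$ one has $\int_{P_{\overline G}C}e^{-t}\,dx\,dt=\int_G P_Gf/\Vert f\Vert_\infty$ (and similarly for $C$ itself), and then run the Brunn--Minkowski/Loomis--Whitney slicing step against the weight $e^{-t}$, closing with a weighted Berwald inequality. You correctly flag that the crux is to prove Berwald's inequality against the log-concave weight $e^{-t}$; the paper does exactly this (Theorem~\ref{thm:BerwaldExponential} in Section~\ref{sec:FunctionalWeigthedBerwald}), proving the monotonicity of $\gamma\mapsto\bigl(\Gamma(1+\gamma)^{-1}\int_L h^\gamma e^{-t}/\int_L e^{-t}\bigr)^{1/\gamma}$ by the layer-cake-plus-single-crossing argument.

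Two inaccuracies in your write-up, though. First, your derivation of the constant is wrong: $\binom{n-k}{n-i}$ does \emph{not} come from substituting $(n+1,i+1,j+1,k+1)$ into the constant of Theorem~\ref{th:localLoomisWhitney}, which would give $\binom{i+1}{k+1}\binom{j+1}{k+1}/\binom{n+1}{k+1}$, a different number (e.g.\ $n=4,i=j=3,k=2$ gives $8/5\ne 2$). Rather, the change of constant reflects that the weighted Berwald has a different constant from the classical one: against the weight $e^{-t}$ the extremizer is the function whose superlevel sets shrink exponentially rather than linearly, so the constant is $\Gamma(1+\sigma)/\prod_i\Gamma(1+\alpha_i)$ rather than $\prod_i\binom{\alpha_i+n}{n}/\binom{\sigma+n}{n}$; with $\alpha_1=n-j$, $\alpha_2=n-i$, $\sigma=n-k$ this collapses to $(n-k)!/\bigl((n-i)!(n-j)!\bigr)=\binom{n-k}{n-i}$. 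Second, your ``concrete'' suggestion to ``first apply the reverse Rogers--Shephard splitting of Theorem~\ref{th:localLoomisWhitneyReverse}'' is not what happens and doesn't fit: that theorem is the \emph{reverse} local Loomis--Whitney and its proof uses Theorem~\ref{th:splittRS}, not Berwald; the actual proof follows the slicing inequality analogous to \eqref{LocalLWeq1} (Fubini through $\overline{E\cap H}$, then comparing the fiber to a product of two projections) and then Theorem~\ref{thm:BerwaldExponential}, with no reverse-RS step. Your sketched alternative purely-functional route (iterating \ref{th:splittRS}, \ref{th:upperboundexponent}, \ref{th:locLWFunctProjSect}) is left too undeveloped to judge, and I don't see how the constants would close.
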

If one replaces in Theorem \ref{th:locLWFunctProjSect} (resp.~Theorem \ref{thm: FunctionalLW})
$f$ by $\chi_K$ (resp.~$\exp(-\Vert\cdot\Vert_K)$), $K\in\mathcal K^n$ (resp.~with $0\in K$),  one recovers Theorem \ref{th:localLoomisWhitneyReverse}
(resp.~Theorem \ref{th:localLoomisWhitney}). In order to prove Theorem \ref{thm: FunctionalLW}, we will need to prove a suitable version of Berwald's inequality. This will be done in Section \ref{sec:FunctionalWeigthedBerwald}.%The acquainted reader will notice the use of Berwald's inequality (cf.~\cite[Satz 8]{Ber} or Theorem \ref{berwald}) to show Theorem \ref{th:localLoomisWhitney}. Analogously, we show the weighted functional counterpart of Berwald's inequality in order to derive Theorem \ref{thm: FunctionalLW}.

%\begin{thm}\label{thm:BerwaldExponential}
%Let $f\in\F(\R^n)$ and let $L$ be the convex set $L=\{(x,t)\in\R^n\times [0,\infty):f(x)\geq e^{-t}\Vert f\Vert_\infty\}$. Let $h_1,\dots,h_m:L\to[0,\infty)$ be continuous, concave, not identically-null functions, %$\alpha_1,\dots,\alpha_m>0$ and $\sigma=\alpha_1+\dots+\alpha_m$. Then,
%$$
%\frac{1}{\int_L e^{-t}dxdt}\int_L \prod_{i=1}^mh_i^{\alpha_i}(x,t)e^{-t}dxdt\leq \frac{\Gamma\left(1+\sigma\right)}{\prod_{i=1}^m\Gamma\left(1+\alpha_i\right)}\prod_{i=1}^m \frac{1}{\int_L e^{-t}dxdt}\int_L %h_i^{\alpha_i}(x,t)e^{-t}dxdt.
%$$
%\end{thm}

We return once again to the question of finding functional analogues of classical geometric inequalities, but turn our attention to two other inequalities of Rogers and Shephard.
In their paper, Rogers and Shephard linked inequality \eqref{eq:volumesplit} with their classical inequality for the difference body, and with the following generalization which applies to any two convex bodies $K$ and $L$ in $\R^n$   \cite{RS57} (here $A+B = \{ a+b:a\in A, b\in B\}$ is the Minkowski sum of two bodies).
\begin{equation}\label{eq:RogersShephard}
 \vol_n(K\cap(-L)) \vol_n(K+L)\leq{2n\choose n} \vol_n(K)\vol_n(L),
\end{equation}
where equality holds if and only if $K=-L$ is an $n$-dimensional simplex (cf.~\cite{AlJiVi}).
The special case $K = -L$ is the well known Rogers-Shephard inequality for the difference body.

We next formulate the functional version, which was derived in \cite[Theorem 2.1]{AGJV}, for this inequality, and link it to our inequality of Theorem \ref{th:splittRS}. We need to first recall the functional analogue for Minkowski sum and averaging of bodies. This again is defined using the epi-graphs of the logarithms of the functions, for which we take usual averages in $\R^{n+1}$.
More formally, given $f,g\in\F(\R^n)$, let us define a function of one more variable
\[
f\otimes g:\R^n\times[0,1]\rightarrow[0,\infty),\quad f\otimes g(z,t):=\sup_{z=(1-t)x + ty}f(x)^{1-t}g(y)^{t}.
\]
We shall consider the $t$-level of this function $f\otimes g(\cdot, t)$ as the average of $f$ and $g$ with weights $(1-t)$ and $t$. Note that if $f = \exp(-u)$ and $g = \exp (-v)$ then letting \[ {\epi}(w_t) = (1-t)\epi (u) + t \epi(v)\]
we have $f\otimes g (\cdot , t) = \exp (-w_t)$.

Recall that the classical
Pr\'ekopa-Leindler inequality \cite{Pr,Le} implies that for any $t\in[0,1]$
% $(f\otimes g)(\cdot,t)\in\F(\R^n)$  and that
\begin{equation}\label{eq:PrekopaLeindler}
\int_{\R^n}(f\otimes g)(z,t)dz\geq\left(\int_{\R^n}f(x)dx\right)^{1-t}\left(\int_{\R^n}g(y)dy\right)^t.
\end{equation}
% Moreover, if $n=1$, equality holds if and only if $g(x)=C\,f(x-x_0)$, where $C>0$ and $x_0\in\R^n$.
This is considered as the functional analogue of the fact that for $t\in[0,1]$ we have
$\vol_n((1-t)K+tL)\geq\vol_n(K)^{1-t}\vol_n(L)^t$, which is the multiplicative version of Brunn-Minkowski inequality:
\[
\vol_n(K+L)^\frac1n\geq\vol_n(K)^\frac1n+\vol_n(L)^\frac1n.
\]
Here equality holds if and only if $K$ and $L$ are homothetic to each other.%, or are contained in parallel hyperplanes.

If one uses the sum of the functions, rather than an average, one can ask
\[ \epi (w) = \epi(u) + \epi (v), \]
and then $\exp (-w) = f\star g$ is called the Asplund product of $f$ and $g$, and can be written as
\[
f\star g:\R^n\to[0,\infty),\quad f\star g(z):=\sup_{z=x+y}f(x)g(y).
\]

In \cite{AGJV} it was shown that
\begin{equation}\label{eq:FuncRS}
\Vert f*g\Vert_{\infty}\int_{\R^n}f\star g(z)dz\leq{2n\choose n}\Vert f\Vert_{\infty}\Vert g\Vert_{\infty}\int_{\R^n}f(x)dx\int_{\R^n}g(y)dy.
\end{equation}
where as usual the convolution of two functions is $f*g(z):=\int_{\R^n}f(x)g(z-x)dx$. Note that for $f = \chi_K$ and $g = \chi_T$ we have that $f\star g = \chi_{K+T}$ and $\Vert f*g\Vert_{\infty} = \max_z \vol_{n}(K \cap (z-T))$.
Moreover, equality in \eqref{eq:FuncRS} holds if and only if both functions are constant multiples of characteristic functions of an
$n$-dimensional simplex $\Delta$ and of $-\Delta$, respectively.

The union of the averages $(1-t)K + t L$ over $t\in [0,1]$ produces the convex hull of $K$ and $L$. This fact, together with the inequality \eqref{eq:volumesplit}, was used by Rogers and Shephard  in \cite[(16)]{RS58}
 to get a sharp inequality for the convex hull of two bodies with a common point. They showed that
for any $K,L\in\K^n$ with $0\in K\cap L$, one has
\begin{equation}\label{eq:RoShconvhull}
\vol_n(K\cap(-L))\vol_n(\conv(K,L)) \leq 2^n\vol_n(K)  \vol_n(L),
\end{equation}
and equality holds if and only if $K=-L$ is an $n$-dimensional simplex (see \cite{AGJV}). Inequality \eqref{eq:RoShconvhull}
was strengthened in \cite[Theorem 1.6]{AEFO} and \cite[Theorem 2.4]{AGJV} by showing that for any $K,L\in\K^n$ with $0\in K\cap L$, then
\begin{equation}\label{eq:RoShpolar}
\vol_n(\conv(K,L))\vol_n((K^\circ-L^\circ)^\circ)\leq\vol_n(K)\vol_n(L),
\end{equation}
and equality holds if and only if $K$ and $L$ are simplices, with a common vertex at the origin, and such that
the $n$ facets of $K$ and $-L$ containing the origin are contained in the same hyperplanes (cf.~\cite[Theorem 2.4]{AGJV}).

Let us recall yet a third classical inequality of Rogers and Shephard which will come up in our functional constructions: for $K,L\in\K^n$, we have
\begin{equation}\label{eq:RoShHyperplanes}
\vol_n (K\cap(-L))\vol_{n+1} (\conv\{K\times\{0\},L\times\{1\}\})\leq\frac{2^n}{n+1}\vol_n(K)  \vol_n(L),
\end{equation}
and equality holds if and only if $K=-L$ is an $n$-dimensional simplex (cf.~\cite{AEFO}).

Our functional analogue for the body $\conv\{K\times\{0\},L\times\{1\}\}$ is the functional $f\otimes g$.
The convex hull of two functions is then the projection of $f\otimes g$ on the first $n$-coordinates. In terms of epi-graphs, letting $f = \exp (-u)$ and $g = \exp (-v)$ we have that their functional convex hull $\exp(-w)$ satisfies $\epi(w) = \conv(\epi(u), \epi(v))$. Equivalently we can write $\exp(-w)$ as
\[
(f\tilde{\star}g)(z):=\sup_{z=tx+(1-t)y}f(x)^tg(y)^{1-t}=\sup_{t\in[0,1]}f\otimes g(z,t),
\]
which assures $f\tilde{\star}g\in\F(\R^n)$ and $\supp (f\tilde{\star}g)=\conv\{\supp (f),\supp (g)\}$.
In particular, $\chi_K\tilde{\star}\chi_L=\chi_{\conv(\{K, L\})}$.
Moreover, let us observe that for any $f,g,h\in\mathcal{F}(\R^n)$, we have that
\begin{equation}\label{eq:prodishull}
f(z),g(z)\leq h(z)\,\forall z\in\R^n\quad\text{ implies }\quad f\widetilde{\star}g(z)\leq h(z)\,\forall z\in\R^n.
\end{equation}

A functional analogue for inequality  \eqref{eq:RoShconvhull} and \eqref{eq:RoShpolar} in the case $L=K$ was first given by Colesanti in \cite{Col} and afterwards in \cite{AGJV,AEFO} in the general case. In fact, inequality \eqref{eq:RoShpolar} and the characterization of the equality case was obtained as a consequence of this functional inequality. Namely,
\begin{equation}\label{eq:RSFuncAGJV2}
\int_{\R^n}\sqrt{f(w)g(-w)}dw\int_{\R^n}\sqrt{f\star g(2z)}dz\leq
2^n\int_{\R^n}f(x)dx\int_{\R^n}g(y)dy,
\end{equation}
with equality if and only if $\frac{f(x)}{\Vert f\Vert_\infty}=\exp(-\Vert x-x_0\Vert_K)$ and $\frac{g(-x)}{\Vert g\Vert_\infty}=\exp(-\Vert x-x_0\Vert_{L})$, where $x_0\in\R^n$ and $K$ and $L$ are two simplices with the origin as  a common vertex and the $n$ facets containing the origin lying in the same set of $n$ hyperplanes. In Section \ref{sec:Rogers-Shephard type inequalities} we obtain inequalities \eqref{eq:FuncRS} and \eqref{eq:RSFuncAGJV2} as a direct consequence of Theorems \ref{th:splittRS} and  \ref{SplittingColesanti} respectively and show that the characterization of the equality cases in \eqref{eq:RoShpolar} also implies the characterization of the equality cases in \eqref{eq:RSFuncAGJV2}. In addition, we use Theorem \ref{th:splittRS} to obtain
a new extension of \eqref{eq:RoShpolar}.

\begin{thm}\label{thm:RSFuncconvexhullbest}%
	Let $f,g\in\F(\R^n)$ be such that $f(0)=\Vert f\Vert_{\infty}$ and $g(0)=\Vert g\Vert_{\infty}$.
	Then
	\begin{equation}\label{eq:RSFuncconvexhullbest}
	\begin{split}
	&\int_{\R^n}f\tilde{\star}g(z)dz\int_{\R^n}\sup_{0<s<1}\left\{f\left(\frac{x}{1-s}\right)^{1-s}g\left(-\frac{x}{s}\right)^s\right\}dx\leq \\
	&(2n+1){2n\choose n}
\frac{\log\frac{\Vert f\Vert_\infty}{\Vert g\Vert_\infty}}{\Vert f\Vert_\infty-\Vert g\Vert_\infty}\max\{\Vert f\Vert_\infty,\Vert g\Vert_\infty\}^2	
\int_0^1t^n(1-t)^n\left( \int_{\R^n}f(x)^tdx\int_{\R^n}g(y)^{1-t}dy\right)dt.
	\end{split}
	\end{equation}
Equality holds if and only if $\Vert f\Vert_\infty=\Vert g\Vert_\infty$,
$f(x)=\Vert f\Vert_\infty\chi_K(x)$, $g(x)=\Vert g\Vert_\infty\chi_{-L}(x)$, and $K$ and $L$
are simplices with a common vertex at the origin such that the n facets of each of them
containing the origin are contained in the same n hyperplanes.
\end{thm}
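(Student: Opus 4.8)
The plan is to deduce Theorem~\ref{thm:RSFuncconvexhullbest} from Theorem~\ref{th:splittRS} by lifting everything one dimension: we identify $\conv\{K\times\{0\},L\times\{1\}\}$-type constructions with the behavior of $f\otimes g$ on $\R^n\times[0,1]$, and then apply the splitting inequality in $\R^{n+1}$ with the distinguished $1$-dimensional subspace $H=\{0\}^n\times\R$ (so $i=1$ and $\binom{2n+1}{1}$ is replaced, after unwinding, by $(2n+1)\binom{2n}{n}$ through the intermediate product with the Rogers--Shephard constant coming from the $n$-dimensional slices). More precisely, I would set $F:\R^{n+1}\to[0,\infty)$ by first building, from $f$ and $g$ with $f(0)=\|f\|_\infty$, $g(0)=\|g\|_\infty$, the function whose epigraph is $\conv(\epi u,\epi v)$ after a shear, so that the slices of $F$ at height $t$ recover $f(\cdot/(1-t))^{1-t}$ and $g(\cdot/t)^{t}$ suitably rescaled; the projection of $F$ onto the first $n$ coordinates is $f\tilde\star g$ (using \eqref{eq:prodishull} and the definition of $f\tilde\star g$ as $\sup_{t}f\otimes g(\cdot,t)$), and the ``section'' in the complementary directions produces the $\sup_{0<s<1}\{f(x/(1-s))^{1-s}g(-x/s)^s\}$ term.

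The key steps, in order, would be: (i) normalize — reduce to $\|f\|_\infty=\|g\|_\infty=1$ is \emph{not} possible because the inequality genuinely involves $\log(\|f\|_\infty/\|g\|_\infty)/(\|f\|_\infty-\|g\|_\infty)$, so instead I would carry the two sup-norms through explicitly, noting that this logarithmic factor is exactly $\int_0^1 \|f\|_\infty^{1-t}\|g\|_\infty^{t}\,dt / (\text{something})$, i.e.\ it is the integral over $t\in[0,1]$ of the geometric interpolation of the two heights, which is precisely what appears when one integrates the slice-wise Prékopa--Leindler normalization; (ii) write $\int_{\R^n}f\tilde\star g(z)\,dz$ and relate it to $\int_0^1\big(\int_{\R^n}f\otimes g(z,t)\,dz\big)\,dt$ up to the Rogers--Shephard-type loss, via the one-dimensional instance of \eqref{eq:volumesplit}/Theorem~\ref{th:splittRS} applied in the last coordinate together with a change of variables $z=(1-t)x$, $z=ty$ that produces the weights $(1-t)^n$ and $t^n$; (iii) identify $\int_0^1 t^n(1-t)^n\big(\int f(x)^t dx\int g(y)^{1-t}dy\big)dt$ as the right-hand side after the substitution, and check the constant $(2n+1)\binom{2n}{n}$ arises as $(2n+1)$ from the one-dimensional Berwald/Beta-function normalization $\int_0^1 t^n(1-t)^n dt=1/\big((2n+1)\binom{2n}{n}\big)$ times the $\binom{2n}{n}$ coming from \eqref{eq:FuncRS}-type Rogers--Shephard loss on the $n$-dimensional slices.

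The main obstacle will be step (ii): correctly matching the ``section'' factor $\int_{\R^n}\sup_{0<s<1}\{f(x/(1-s))^{1-s}g(-x/s)^{s}\}\,dx$ with the section of the lifted body $F$ through the subspace $(E\cap H)^\perp$ or its analogue, because the sup over $s$ is a genuine projection/shadow in the last coordinate rather than a fixed-height slice, so one must apply the projection part $P_H F$ of Theorem~\ref{th:splittRS} rather than a Fubini slice, and then argue that $P_{\{0\}^n\times\R}F$ integrated against $e^{-t}$-type weight — here it is the Lebesgue weight since we are on a bounded slab $[0,1]$ — gives exactly that supremum integral. I expect this to require a careful tracking of which coordinate plays the role of $H$ versus $H^\perp$ in \eqref{eq:splittingTheorem}, and a change of variables $(x,t)\mapsto(x/(1-t),t)$ whose Jacobian contributes the $(1-t)^n$ (and symmetrically $t^n$) factors.

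For the equality characterization, I would invoke the equality case of Theorem~\ref{th:splittRS}: equality forces $F/\|F\|_\infty=\chi_C$ for a convex body $C\subset\R^{n+1}$ with the Rogers--Shephard extremal structure with respect to $\{0\}^n\times\R$, which upon unwinding the lift forces $f/\|f\|_\infty=\chi_K$, $g/\|g\|_\infty=\chi_{-L}$; the condition that $C\cap(H^\perp+\R^+v)$ be a cone over $C\cap H^\perp$ translates, slab by slab, into $K$ and $-L$ being simplices sharing the origin as a vertex with their origin-facets in the same $n$ hyperplanes, and the interpolation of heights forces $\|f\|_\infty=\|g\|_\infty$ (otherwise the logarithmic mean is a strict mean and one loses in the Prékopa--Leindler step). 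This matches the claimed characterization, completing the proof.
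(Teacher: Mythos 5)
Your high-level idea---lift, apply Theorem \ref{th:splittRS}, recover the logarithmic factor from $\int_0^1 \|f\|_\infty^{1-t}\|g\|_\infty^{t}\,dt$ and the Beta-function factor $\int_0^1 t^n(1-t)^n\,dt=\big((2n+1)\binom{2n}{n}\big)^{-1}$---is in the right spirit, but the construction you describe lives in the wrong ambient dimension, and as a result it cannot produce the section term $\int_{\R^n}\sup_{0<s<1}\{f(x/(1-s))^{1-s}g(-x/s)^{s}\}\,dx$. You propose $F:\R^{n+1}\to[0,\infty)$ whose horizontal slices recover $f\otimes g(\cdot,t)$, with $H=\{0\}^n\times\R$ as the distinguished subspace. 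But then the section of $F$ through $H$ is the one-dimensional profile $t\mapsto F(0,t)$ and the projection onto $\R^n$ is $f\tilde{\star} g$; neither of these, nor any Fubini slice of $F$, equals the $n$-dimensional integral $\int_{\R^n}\sup_{s}\{f(x/(1-s))^{1-s}g(-x/s)^{s}\}\,dx$. The reason is structural: once you form $f\otimes g(z,t)=\sup_{z=(1-t)z_1+tz_2}f(z_1)^{1-t}g(z_2)^{t}$, the two arguments of $f$ and $g$ have been summed, so one cannot recover by a single section or projection a functional that couples $f$ at $x/(1-s)$ with $g$ at $-x/s$ for the \emph{same} free variable $x\in\R^n$.

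The paper avoids this by lifting to $\R^{2n+1}$ (not $\R^{n+1}$), keeping the two arguments in separate $\R^n$ blocks: $\Psi(z_1,z_2,t)=f(z_1/(1-t))^{1-t}g(z_2/t)^{t}$. Theorem \ref{th:splittRS} is then applied \emph{twice}: first in $\R^{2n+1}$ with the hyperplane $H=e_{2n+1}^\perp$ (cost $\binom{2n+1}{2n}=2n+1$; the fiber $\int_0^1\Psi(0,0,t)\,dt$ produces the logarithmic factor exactly as you predicted, and $P_H\Psi=F\tilde\star G$ with $F(z_1,z_2)=f(z_1)\chi_{\{0\}}(z_2)$, $G(z_1,z_2)=g(z_2)\chi_{\{0\}}(z_1)$); then a second time inside $H\cong\R^{2n}$ with the diagonal subspace $E=\{(x,-x):x\in\R^n\}$ and its complement $E^\perp=\{(x,x):x\in\R^n\}$ (cost $\binom{2n}{n}$). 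It is precisely this second, diagonal application that manufactures the $\sup_s$ term, since $F\tilde\star G(x,x)=\sup_t f(x/(1-t))^{1-t}g(x/t)^{t}$ and $P_E(F\tilde\star G)(x,-x)=f\tilde\star g_-(2x)$. Your guess that the extra $\binom{2n}{n}$ comes from an \eqref{eq:FuncRS}-type Rogers--Shephard loss on slices has the right flavor but is off: it is a second direct application of Theorem \ref{th:splittRS} on $\R^{2n}$, and without the $\R^{2n+1}$ lift there is no $2n$-dimensional intermediate space in which to take the diagonal split. This is the missing idea; once adopted, your remarks on the constants and on the equality case go through essentially as the paper does them.
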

Replacing $f$ and $g$ by the characteristic functions of the two convex bodies above, the inequality \eqref{eq:RoShpolar} is recovered.
%\eqref{eq:RoShconvhull} and \cite[Theorem 1.9]{AEFO}.
Let us observe that whenever we use the expression $r(a,b):=\frac{a-b}{\log\frac{a}{b}}$, for any $a,b>0$, we assume that $r(a,a)=a$.

The functional counterpart of inequality \eqref{eq:RoShHyperplanes} is given in the next theorem.

\begin{thm}\label{thm:RSFunc(n+1)}
Let $f,g\in\mathcal{F}(\R^n)$. Then
\begin{equation}\label{eq:RSFunc(n+1)}
\begin{split}
&\int_{\R^{n+1}}f\otimes g(z,t)dzdt\int_{\R^n}\sqrt{f(x)g(-x)}dx\leq\\
&2^n{2n+1\choose n}
\max\{\Vert f\Vert_{\infty},\Vert g\Vert_{\infty}\}\int_0^1t^n(1-t)^n\left(\int_{\R^n}f(x)^tdx\int_{\R^n}g(y)^{1-t}dy\right)dt.
\end{split}
\end{equation}
Equality holds if and only if $\frac{f(x)}{\Vert f \Vert_\infty}=\chi_K(x)=\frac{g(-x)}{\Vert g\Vert_\infty}$ with $K$ being an $n$-dimensional simplex.
\end{thm}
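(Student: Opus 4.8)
The plan is to obtain Theorem~\ref{thm:RSFunc(n+1)} as a consequence of the splitting inequality of Theorem~\ref{th:splittRS} in dimension $2n+1$, mimicking the way Rogers and Shephard derived \eqref{eq:RoShHyperplanes} from \eqref{eq:volumesplit}. The function I would feed into Theorem~\ref{th:splittRS} is, up to a translation,
\[
\Psi(x,y,t):=f\!\left(\frac{x}{1-t}\right)^{1-t}g\!\left(\frac{y}{t}\right)^{t}\qquad(t\in(0,1)),
\]
extended by $0$ for $t\notin(0,1)$ (and replaced by its upper semicontinuous hull so that $\Psi\in\mathcal F(\R^{2n+1})$). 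The key structural point is that $\log\Psi(x,y,t)=(1-t)\log f\bigl(\tfrac{x}{1-t}\bigr)+t\log g\bigl(\tfrac{y}{t}\bigr)$ is a sum of two perspective functions of the concave functions $\log f$ and $\log g$, hence jointly concave on the convex slab $\{0<t<1\}$. A change of variables $x=(1-t)u$, $y=tv$ gives
\[
\int_{\R^{2n+1}}\Psi=\int_0^1 t^n(1-t)^n\Big(\int_{\R^n}f^{1-t}\Big)\Big(\int_{\R^n}g^{t}\Big)\,dt=\int_0^1 t^n(1-t)^n\Big(\int_{\R^n}f^{t}\Big)\Big(\int_{\R^n}g^{1-t}\Big)\,dt
\]
(the last equality by $t\leftrightarrow 1-t$), which is precisely the last factor on the right-hand side of \eqref{eq:RSFunc(n+1)}; also $\sup_{x,y}\Psi(x,y,t)=\|f\|_\infty^{1-t}\|g\|_\infty^{t}$, whence $\|\Psi\|_\infty=\max\{\|f\|_\infty,\|g\|_\infty\}$. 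One must of course check that this integral is finite, which holds because $\int f^{1-t}$ and $\int g^{t}$ blow up at most like $(1-t)^{-n}$ and $t^{-n}$ at the endpoints, which is absorbed by $t^n(1-t)^n$.

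Now I would apply Theorem~\ref{th:splittRS} not to $\Psi$ but to the translate $\Phi(x,y,t):=\Psi(x,y,t+\tfrac12)$, together with the subspace
\[
H:=\{(a,b,t)\in\R^n\times\R^n\times\R:\ a=b\}\in\mathcal L^{2n+1}_{n+1},\qquad H^{\perp}=\{(v,-v,0):v\in\R^n\}.
\]
The translation is the one genuine trick: the section we want sits at ``$t=\tfrac12$'' for $\Psi$, and shifting puts it at $t=0$, where it becomes a bona fide interior slice of $\Phi$ rather than lying on the boundary of the support (on which $\Psi\equiv 0$, which would make the whole argument collapse). Two short computations then identify the shadow and the section: maximising $\Phi(u+v,u-v,t)$ over $v\in\R^n$ is the same as maximising $f(x')^{1-s}g(y')^{s}$ over all $x',y'$ with $(1-s)x'+sy'=2u$, where $s=t+\tfrac12$, so $P_H\Phi(u,u,t)=(f\otimes g)(2u,t+\tfrac12)$; and $\Phi(v,-v,0)=f(2v)^{1/2}g(-2v)^{1/2}=\sqrt{f(2v)g(-2v)}$. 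Integrating over $H$ and $H^{\perp}$ with respect to the induced Lebesgue measures (whose Jacobian factors relative to the parametrisations $(u,t)\mapsto(u,u,t)$ and $v\mapsto(v,-v,0)$ are both $2^{n/2}$) and rescaling $z=2u$, $w=2v$, I obtain
\[
\int_H P_H\Phi=2^{-n/2}\int_{\R^{n+1}}(f\otimes g)(z,t)\,dz\,dt,\qquad \int_{H^{\perp}}\Phi=2^{-n/2}\int_{\R^n}\sqrt{f(w)g(-w)}\,dw.
\]
Plugging these, together with $\|\Phi\|_\infty=\max\{\|f\|_\infty,\|g\|_\infty\}$ and $\int_{\R^{2n+1}}\Phi=\int_{\R^{2n+1}}\Psi$, into $\int_H P_H\Phi\cdot\int_{H^{\perp}}\Phi\le\binom{2n+1}{n+1}\|\Phi\|_\infty\int_{\R^{2n+1}}\Phi$, and using $\binom{2n+1}{n+1}=\binom{2n+1}{n}$, the factor $2^{-n/2}\cdot 2^{-n/2}=2^{-n}$ passes to the right-hand side and yields exactly \eqref{eq:RSFunc(n+1)}. (As a sanity check, for $f=\chi_K$ and $g=\chi_L$ the function $\Psi$ is the characteristic function of $\{(x,y,t):x\in(1-t)K,\ y\in tL,\ 0\le t\le 1\}$, and one recovers \eqref{eq:RoShHyperplanes}.)

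For the equality case I would argue as follows. Since the four identifications above are equalities, equality in \eqref{eq:RSFunc(n+1)} is equivalent to equality in Theorem~\ref{th:splittRS} applied to $\Phi$, i.e.\ to $\Phi/\|\Phi\|_\infty=\chi_M$ for some convex body $M$ that is extremal in \eqref{eq:volumesplit} for $(H,H^{\perp})$. From $\Phi/\|\Phi\|_\infty=\chi_M$ alone one deduces, for every $a$ with $f(a)>0$, every $b$ with $g(b)>0$ and every $s\in(0,1)$, that $f(a)^{1-s}g(b)^{s}$ equals the constant $\|\Phi\|_\infty$; taking logarithms, $(1-s)\log f(a)+s\log g(b)$ is affine in $s$ and constant, hence $\log f(a)=\log g(b)=\log\|\Phi\|_\infty$. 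Thus $\|f\|_\infty=\|g\|_\infty=:c$ and $f=c\chi_K$, $g=c\chi_{-L}$ for convex bodies $K,L$ (bounded because $f,g$ are integrable). In that situation the inequality \eqref{eq:RSFunc(n+1)} is literally the inequality \eqref{eq:RoShHyperplanes} for $K$ and $L$, so equality holds if and only if $K=-L$ is an $n$-dimensional simplex, that is, $f/\|f\|_\infty=\chi_K=g(-\,\cdot\,)/\|g\|_\infty$ with $K$ a simplex; conversely such $f,g$ clearly produce equality. I expect the main obstacle to be precisely this equality bookkeeping, together with the foundational checks on $\Psi$: that it is genuinely log-concave across the boundary of $\{0<t<1\}$, that its integral is finite, and that its essential supremum is correctly $\max\{\|f\|_\infty,\|g\|_\infty\}$ — everything else being routine once the function $\Psi$ and the subspace $H$ have been chosen.
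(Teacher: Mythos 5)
Your proof is correct, and it follows the paper's strategy exactly: encode $(f\otimes g,\sqrt{f(\cdot)g(-\cdot)})$ as a projection and a central section of a single log-concave function on $\R^{2n+1}$, feed this into Theorem~\ref{th:splittRS}, and reduce the equality case to the geometric inequality~\eqref{eq:RoShHyperplanes}. The only real difference is the coordinatization. The paper works with $F(z_1,z_2,t)=f\bigl(\tfrac{z_1}{t}\bigr)^t g\bigl(\tfrac{z_2-z_1}{1-t}\bigr)^{1-t}$ and the \emph{coordinate} subspace $H=\lin\{e_{n+1},\dots,e_{2n+1}\}$, so that $P_HF(z_2,t)=f\otimes g(z_2,t)$ directly and the section is taken along $x_0+H^\perp$ with $x_0=(0,0,1/2)\in H$; no Jacobian correction is needed because the subspaces are coordinate ones. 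You instead use the symmetric $\Psi(x,y,t)=f\bigl(\tfrac{x}{1-t}\bigr)^{1-t}g\bigl(\tfrac{y}{t}\bigr)^t$ with the diagonal $H=\{(a,a,t)\}$ and antidiagonal $H^\perp=\{(v,-v,0)\}$, which introduces the two $2^{n/2}$ Jacobian factors that recombine to $2^{-n}$; the two constructions are related by the shear $(z_1,z_2)\mapsto(z_1,z_2-z_1)$ together with $t\leftrightarrow 1-t$, so this is a matter of presentation rather than substance. Two small things you do more explicitly than the paper and that are worth keeping: you translate the function (to $\Phi$) so that the desired section genuinely sits on $H^\perp$ through the origin, as Theorem~\ref{th:splittRS} requires, whereas the paper uses the affine slice $x_0+H^\perp$ and leaves the (harmless, since $x_0\in H$) translation implicit; and you justify log-concavity of $\Psi$ via the perspective-function argument, which is exactly the content of Lemma~\ref{lem:logConcF} in the sheared coordinates. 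Your equality-case bookkeeping (forcing $\|f\|_\infty=\|g\|_\infty$ and $f,g$ to be multiples of indicators, then invoking the equality case of~\eqref{eq:RoShHyperplanes}) matches the paper's.
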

If we substitute $f$ and $g$ by the characteristic functions of two convex bodies we recover \eqref{eq:RoShHyperplanes}.
%
%
%Analogously to the convex hull, the intersection of two log-concave functions $f,g\in\mathcal F(\R^n)$, $f=\exp(-u)$, $g=\exp(-v)$, is then
%\[h(x)=\min\{f(x),g(x)\}=\exp(-\max\{u(x),v(x)\})=e^{-w(x)}\in\mathcal F(\R^n),\]
% which in terms of epi-graphs reads as $\epi(w)=\epi(u)\cap\epi(v)$.
%

The paper is organized as follows: Section \ref{sec:Estimates by marginals} is divided in two subsections. In the first one we prove Theorems \ref{th:splittRS} and \ref{SplittingColesanti}, which are both extensions of \eqref{eq:volumesplit} and give lower estimates for the integral of a log-concave function in terms of the integral of projections and sections by orthogonal subspaces. In the second one we prove Theorems \ref{th:upperboundcharact} and \ref{th:upperboundexponent}, which give upper bounds for the integral of a log-concave function in terms of the integrals of its sections or projections.
For convenience of the reader, we have moved all the proofs for equality cases in the various theorems into one section, towards the end of the paper, Section \ref{sec:Equality cases}.
In Section \ref{sec:FunctionalWeigthedBerwald} we will prove a Berwald's inequality that is needed in the proof of Theorem \ref{thm: FunctionalLW}.
In Section \ref{sec:LW} we describe the setting of Loomis-Whitney type results. We divide this section into two subsections. The first one is devoted to prove Theorems \ref{th:localLoomisWhitneyReverse} and \ref{th:locLWFunctProjSect} which are geometric and functional reverse local Loomis-Whitney type results which will be deduced from the results in Section \ref{sec:Estimates by marginals}. The second one is devoted to prove Theorems \ref{th:localLoomisWhitney} and \ref{thm: FunctionalLW}, which are direct geometric and functional local Loomis-Whitney results.
%We then use our main theorem to improve the known results of this type: In its subsection \ref{subsec:LWreverse}
%we prove our Theorem \ref{SplittingColesanti}, and as a consequence, again using symmetrization, prove Theorem \ref{th:localLoomisWhitneyReverse}. We also produce some more theorems of a similar type. In the subsection \ref{subsec:LWdirect} we produce the proof of our Theorem \ref{dirLocLW}.
In Section \ref{sec:Rogers-Shephard type inequalities} we recover and prove some new functional Rogers-Shephard type inequalities. Section \ref{sec:Equality cases} is devoted to the study of the equality cases in the inequalities previously proven.   %Section \ref{sec:Equality cases} deals, as explained above, with the equality cases of all the theorems in this paper, each theorem with its own subsection.
Finally, Section \ref{sec:appendixBerwald} is an appendix in which we give the proof of the classical Berwald's inequality, together with the equality cases, that is used in the proof of Theorem \ref{th:localLoomisWhitney}. We include this appendix since we were not able to find a translation of this result into English in the literature.%we \textcolor{blue}{do something}.

\section{Estimates for the integral of a log-concave function by its marginals}\label{sec:Estimates by marginals}

Let us denote the Euclidean norm of $x\in\R^n$ by $\|x\|_2=\sqrt{x_1^2+\cdots+x_n^2}$
and the unit Euclidean ball by $B^n_2:=\{x\in\R^n:\|x\|_2\leq 1\}$.
For $f\in\mathcal{F}(\R^n)$ and $H\in\mathcal{L}^n_i$ we define the symmetric
function of $f$ with respect to $H$, extending the Steiner and Schwarz symmetrals studied in \cite{CoSaYe}.
We recall that for $H\in\mathcal{L}^n_i$ and a set $K$ with measurable section $K\cap(x+H^\bot)$, the symmetral of $K$ with respect to $H$ is the set defined by
\[
S_H(K):=\bigcup_{x\in H}\left\{x+\rho_x B_2^n\cap H^{\bot}:\vol(\rho_x B_2^n\cap H^{\bot})=\vol(K\cap(x+H^{\bot}))\right\}.
\]
The fact that when $K$ is convex, so is $S_HK$, follows from Brunn's concavity principle.
Notice that if $K$ is a convex body and $\dim(H)=n-1$ (resp.~$\dim(H)=1$) then $S_H(K)$ is the Steiner (resp.~Schwarz) symmetrization of $K$ with respect to $H$.

\begin{df}
 Let $f=e^{-u}\in\mathcal{F}(\R^n)$ and $H\in\mathcal{L}^n_i$.
	Denote $epi(u)=\{(x,t)\in\R^{n}\times [0,\infty):u(x)\geq t\}$ and $\overline{H}:={\textrm{span}}\{H, e_{n+1}\}\in\mathcal{L}^{n+1}_{i+1}$. Then $S_H(f)\in \mathcal{F}(\R^n)$ is defined via its epi-graph by
	%\[ \epi(S_H(f))=S_{\tilde{H}}(\epi(f)).\]
\[
\epi(-\log S_H(f))=S_{\overline{H}}(\epi(-\log f)).
\]
The fact that $S_Hf$ is log-concave follows from the fact that $S_{\overline{H}}(\epi(-\log f))$ is an epi-graph and that
%also $\epi(-\log S_H(f))=S_{\tilde{H}}(\epi(-\log f))$ and
$S_{\overline{H}}$ preserves convexity, by Brunn's concavity principle.
\end{df}

\begin{rmk}
Note that by definition, for $f=\exp(-u)\in\mathcal{F}(\R^n)$,   $H\in\mathcal{L}^n_i$,  we have
  for any $(x,y)\in H\times H^{\bot}$   that $S_H(f)(x+\bar{y})=S_H(f)(x+y)$ for every $\bar{y}$ with $\Vert\bar{y}\Vert_2=\Vert y\Vert_2$.
Furthermore, notice that for every $x\in H$ we have
\begin{equation}\label{eq:projequalssymm}
P_Hf(x)=S_Hf(x).
\end{equation}
Indeed,
\begin{equation*}
\begin{split}
S_Hf(x) & =\exp(-\inf\{t\in\R:(x,t)\in S_{\overline{H}}(epi(u))\})\\
& =\exp(-\inf\{t\in\R:(x+y,t)\in epi(u),\,\textrm{for some }y\in H^{\bot}\})\\
& =\exp(-\inf_{y\in H^{\bot}}\{u(x+y)\})=P_Hf(x).
\end{split}
\end{equation*}

\noindent We shall also use the fact that $S_H$ preserves integrals on the fibers $x_1+H^\bot$, namely
\begin{equation}\label{eq:integsymmsubspace}
\begin{split}
\int_{x_1+H^{\bot}}f(y)dy=
\int_{x_1+H^{\bot}}e^{-u(y)}dy=
\int_0^\infty\vol_{n-i}\left(\{y\in x_1+H^{\bot}:e^{-u(y)}\geq t\}\right)dt=&\\
\int_0^\infty\vol_{n-i}\left(\{y\in x_1+H^{\bot}:e^{-v(y)}\geq t\}\right)dt=
\int_{x_1+H^{\bot}}S_Hf(y)dy,&
\end{split}
\end{equation}
where $e^{-v}=S_Hf$, and
\begin{equation}\label{eq:integsymm}
\int_{\R^n}f(z)dz=\int_H\int_{x+H^{\bot}}f(y)dydx = \int_H\int_{x+H^{\bot}}S_Hf(y)dydx = \int_{\R^n} S_Hf(z)dz.
\end{equation}
\end{rmk}

\subsection{Lower bounds for the integral of $f$}

This subsection is devoted to prove Theorems \ref{th:splittRS} and \ref{SplittingColesanti}, which are both extensions of \eqref{eq:volumesplit} and give lower estimates for the integral of a log-concave function in terms of the integrals of projections and sections by orthogonal subspaces.

The following lemma is an easy consequence of a very similar lemma of Rogers and Shephard \cite[Lemma]{RS58}, which is the main ingredient in the proof of \eqref{eq:volumesplit}. Their case was the equality case in the Lemma below, namely when $y\in K$ and $x\in L$. While a simple proof using \eqref{eq:volumesplit} can be easily given, we chose to give a proof which is along the original line of proof for \eqref{eq:volumesplit} for the case in which the point $(x,y)$ considered in the statement of the theorem is possibly not contained in $L\times K$.

	\begin{lemma}\label{Ortholemma}
		Let $K\in\K^i$, $L\in\K^m$, $i,m\in\N$. Then for any $x\in \R^m$ and $y\in \R^i$ we have
		\[
		\vol_{i+m}\left( \conv\{K\times\{x\} ,\{y\} \times L\}\right)\geq \binom{i+m}{i}^{-1}\vol_{i } ( K) \vol_m(L).
		\]
		Equality holds if and only if $x\in L$ and $y\in K$, or either $K$ or $L$ has empty interior, relative to $\R^i$ or $\R^m$ respectively.
	\end{lemma}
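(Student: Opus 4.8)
The plan is to set up coordinates so that $\R^{i+m} = \R^i \times \R^m$ with the first factor containing $K$ (at ``height'' $x$ in the second factor) and the second factor containing $L$ (at ``height'' $y$ in the first factor). I would slice the convex hull $C := \conv\{K\times\{x\},\,\{y\}\times L\}$ by the affine subspaces $\{z\} \times \R^m$ for $z$ ranging over the segment $[y, K]$ — more precisely, parametrize points of $C$ as $(1-t)(u, x) + t(y, v)$ with $u \in K$, $v \in L$, $t \in [0,1]$. For fixed $t$, the ``horizontal'' first-coordinate slice at level determined by $t$ is a translate of $(1-t)K$, and the fiber in the $\R^m$ direction over a generic point of it contains a translate of $tL$ (taking $u$ to vary so that $(1-t)u + ty$ is fixed forces a relation, but the $\R^m$-fiber still contains $(1-t)x + tL \supseteq$ a translate of $tL$). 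I would then integrate: $\vol_{i+m}(C) \ge \int_0^1 \vol_i\big(\text{slice of } (1-t)K\text{-part}\big) \cdot \vol_m(tL)\, (\text{Jacobian})\, dt$, which after the change of variables reduces to $\vol_i(K)\vol_m(L) \int_0^1 (1-t)^{i-1} t^{m}\, dt$ up to a combinatorial bookkeeping factor. The Beta integral $\int_0^1 (1-t)^{i-1} t^m\, dt = \frac{(i-1)!\, m!}{(i+m)!}$, and after accounting for the extra factor of $i$ coming from the $i$-dimensional direction of the first slice one lands on $\binom{i+m}{i}^{-1}\vol_i(K)\vol_m(L)$.

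A cleaner route, since \eqref{eq:volumesplit} is available to us, is to reduce to it directly. First handle the case $x \in L$, $y \in K$: then $K \times \{x\} \subseteq K\times L$ and $\{y\}\times L \subseteq K \times L$, but more to the point $C \supseteq \conv\{K \times \{x\}, \{y\}\times L\}$ contains, for the subspace $H = \R^i \times \{0\}$, the projection $P_H C \supseteq K$ and, over the right point, a section of full $m$-dimensional size $\approx L$; applying the Rogers--Shephard inequality \eqref{eq:volumesplit} to $C$ with this $H$ gives the bound in the correct direction after checking the equality body of \eqref{eq:volumesplit} is realized here. For general $x, y$, I would observe that $C' := \conv\{K \times \{x'\}, \{y'\}\times L\}$ with $x' \in L$, $y' \in K$ is obtained from $C$ by a volume-preserving shear (affine map with determinant $1$), since moving the ``height'' $x$ to $x'$ and $y$ to $y'$ is a linear shear in the respective complementary directions; hence $\vol_{i+m}(C) = \vol_{i+m}(C')$ and the general case follows from the special one.

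The main obstacle is the \emph{equality characterization}. For the inequality itself the integral-slicing or shear-reduction argument is routine; the delicate part is showing equality forces $x \in L$ and $y \in K$ (when $K, L$ are full-dimensional). Here I would argue: if $y \notin K$ then the shear moving $y$ to some $y' \in K$ is still volume-preserving, but now $C$ is a \emph{proper} subset of $\conv\{K\times\{x\}, \{y'\}\times\partial(\ldots)\}$... — more carefully, one shows that when $y \notin K$, the convex hull ``wastes'' volume because the slices $(1-t)K$ no longer nest monotonically inside the projection of $C$, so the slicing inequality is strict. Concretely, strictness in the slicing step comes from the fact that for $t$ near $1$ the relevant $i$-dimensional slice is strictly smaller than a translate of $(1-t)K$ unless $y$ lies in $K$ (and symmetrically for $x \in L$ as $t \to 0$). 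I would make this rigorous by tracking the equality case of Fubini/Brunn in the slicing, or alternatively by invoking the known equality case of \eqref{eq:volumesplit} — equality in \eqref{eq:volumesplit} applied to $C$ with $H = \R^i\times\{0\}$ requires $C \cap (H^\bot + \R^+ v)$ to be a cone over $C \cap H^\bot$ for every $v$, and one checks this structural condition on $C$ translates exactly into $x \in L$, $y \in K$. The degenerate cases (empty interior of $K$ or $L$) are immediate since then both sides vanish.
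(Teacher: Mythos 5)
Your second, ``cleaner route'' contains a fatal error: there is \emph{no} volume-preserving affine map taking $C=\conv\{K\times\{x\},\{y\}\times L\}$ to $C'=\conv\{K\times\{x'\},\{y'\}\times L\}$ with $x'\in L$, $y'\in K$, in general. To see this abstractly, note that if such a map always existed the inequality would always be an equality, directly contradicting the equality characterization you are trying to prove (which says equality forces $x\in L$, $y\in K$). More concretely, a map sending $K\times\{x\}$ to $K\times\{x'\}$ (pointwise in the $K$-factor) must translate the second coordinate by $x'-x$ on all of $K\times\{x\}$, while simultaneously sending $\{y\}\times L$ to $\{y'\}\times L$ forces the first coordinate to translate by $y'-y$ on $\{y\}\times L$; no single affine map can do both unless $x'=x$ and $y'=y$. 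So the ``general case'' cannot be reduced to the ``special case'' this way, and your argument would prove a false statement (that the volume is always exactly $\binom{i+m}{i}^{-1}\vol_i(K)\vol_m(L)$).

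What \emph{does} work, and is in fact the ``simple proof using \eqref{eq:volumesplit}'' that the paper explicitly mentions and then chooses \emph{not} to use, is to apply \eqref{eq:volumesplit} directly to $C$ after translating so that $y=0$, with $H=\R^i\times\{0\}$. Then $P_HC\supseteq K$ and $C\cap H^\perp\supseteq L$, so $\vol_i(K)\vol_m(L)\le\vol_i(P_HC)\vol_m(C\cap H^\perp)\le\binom{i+m}{i}\vol_{i+m}(C)$, and equality requires $P_HC=K$ (hence $y\in K$) and $C\cap H^\perp=L$ (hence $x\in L$) together with the structural condition from the equality case of \eqref{eq:volumesplit}. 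Your first, slicing argument, by contrast, is left at the level of a sketch --- the ``combinatorial bookkeeping factor'' and ``extra factor of $i$'' are exactly the points where care is needed, and the fibers over $z_1$ are unions of non-nested translates of dilates of $L$ when $x\notin L$, so the reduction to a Beta integral is not as routine as you suggest. The paper itself takes an entirely different route: it uses Shephard's theorem on the convexity of the volume in a shadow system. It sets $v=(y_1-y,x-x_1)$ for interior points $y_1\in K$, $x_1\in L$, shows $f(t)=\vol_{i+m}(\conv\{K\times\{x\}-tv,\{y\}\times L+tv\})$ is convex, observes $f$ is constant (equal to the RHS) on a neighbourhood of $t=1/2$ because there the two bodies intersect, and concludes $f(0)\ge f(1/2)$ since a convex function that is constant on an interval attains its minimum there. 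This is elegant and also powers the equality analysis: strict inequality for $(x,y)\notin L\times K$ follows from monotonicity/convexity of $f$ combined with a growth argument via \eqref{eq:volumesplit}. Your overall strategy for the equality case --- invoke the equality case of \eqref{eq:volumesplit} --- is in the right spirit, but it cannot stand on the broken shear reduction and needs to be attached to a correct proof of the inequality first.
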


\begin{proof}
 Note that if $x\in L$ and $y\in K$ then we have equality by  \eqref{eq:volumesplit}.
		We use Shephard's result on the convexity of the volume of a system of moving shadows \cite{Shep}, as follows: Fix some $x_1$ in the interior of $L$ and $y_1$ in the interior of $K$, and define the vector $v= (y_1-y, x-x_1)\in \R^{i+m}$. Consider the
		function \[ f(t) = \vol_{i+m}\left( \conv\{K\times\{x\}  - tv,\{y\} \times L+tv\}\right) .\]
		By Shephard's result \cite{Shep}  we know that $f$ is convex in $t$. For $t= 1/2$ we have that
		\[ f(1/2) = \vol_{i+m}\left( \conv\left\{\left(K-\frac{y_1-y}{2}\right)\times\left\{\frac{x+x_1}{2}\right\},\left\{\frac{y_1+y}{2}\right\} \times \left(L+\frac{x-x_1}{2}\right)\right\}\right),\]
		so that in particular, since
 \[
 \left(\frac{y_1+y}{2},\frac{x+x_1}{2}\right)\in
 \left(\left(K-\frac{y_1-y}{2}\right)\times\left\{\frac{x+x_1}{2}\right\}\right)\cap\left(\left\{\frac{y_1+y}{2}\right\} \times \left(L+\frac{x-x_1}{2}\right)\right),
 \]
 we have that $f(1/2) = \binom{i+m}{i}^{-1}\vol_{i}(K) \vol_m(L)$. Moreover, this equality holds true also for $f(t)$ for $t$ at an interval around $1/2$, since the two bodies continue to intersect.
		However, a convex function can be constant only on the set where it attains its minimum, which concludes the proof of the inequality.
The proof for the equality case is given in Subsection \ref{subsect:equality in lemma Ortholemma} below. 	
\end{proof}

\begin{lemma}\label{lem:MinLowerBound}
Let  $f\in\F(\R^i)$, $g\in\F(\R^m)$,  and let $A$ and $B$  be real numbers such that $A\geq\Vert f\Vert_\infty$ and $B\geq\Vert g\Vert_\infty$. Then
\[
\int_{\R^{n+m}}\min\left\{\frac{f(x)}{A},\frac{g(y)}{B}\right\}dxdy\geq
\int_{\R^n}\frac{f(x)}{A}dx\int_{\R^m}\frac{g(y)}{B}dy.
\]
Equality holds if and only if and only if $\frac{f(x)}{A}=\frac{f(x)}{\Vert f\Vert_\infty}=\chi_K(x)$ or $\frac{g(y)}{B}=\frac{g(y)}{\Vert f\Vert_\infty}=\chi_L(y)$ for some convex body $K\subseteq\R^i$ or $L\subseteq\R^m$.
\end{lemma}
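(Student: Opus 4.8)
The plan is to deduce the inequality from the elementary pointwise bound $\min\{a,b\}\ge ab$, valid for all $a,b\in[0,1]$ (immediate: if $a\le b$ then $ab\le a=\min\{a,b\}$). First I would note that since $A\ge\Vert f\Vert_\infty$ and $B\ge\Vert g\Vert_\infty$ we have $0\le f(x)/A\le 1$ for every $x\in\R^i$ and $0\le g(y)/B\le 1$ for every $y\in\R^m$. Applying the bound with $a=f(x)/A$, $b=g(y)/B$ and integrating over $\R^i\times\R^m$ — the integrands are nonnegative, so Tonelli's theorem both justifies the iteration and splits the right-hand side — gives
\[
\int_{\R^{i+m}}\min\left\{\frac{f(x)}{A},\frac{g(y)}{B}\right\}dxdy\ \ge\ \int_{\R^i}\frac{f(x)}{A}dx\int_{\R^m}\frac{g(y)}{B}dy,
\]
which is the asserted inequality. (One may assume $\int f>0$ and $\int g>0$, as both sides vanish otherwise.) The same estimate also follows from Fubini together with Chebyshev's correlation inequality for the two non-increasing distribution functions $t\mapsto\vol_i(\{f/A\ge t\})$ and $t\mapsto\vol_m(\{g/B\ge t\})$ on $[0,1]$, but the route above makes the equality discussion cleaner.

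For the equality case I would first record when the scalar inequality is tight: for $a,b\in[0,1]$ with $a\le b$, $\min\{a,b\}=ab$ means $a(1-b)=0$, i.e.\ $a=0$ or $b=1$; in particular the inequality is \emph{strict} whenever $0<a<1$ and $0<b<1$. Hence equality in the integral inequality forces $\min\{f(x)/A,g(y)/B\}=\frac{f(x)}{A}\frac{g(y)}{B}$ for a.e.\ $(x,y)$. If the sets $\{x:0<f(x)/A<1\}$ and $\{y:0<g(y)/B<1\}$ both had positive measure their product would too, and the integrand identity would fail on it — a contradiction. Therefore $f(x)/A\in\{0,1\}$ for a.e.\ $x$, or $g(y)/B\in\{0,1\}$ for a.e.\ $y$.

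Consider the first alternative. Then $f$ takes only the values $A$ and $0$ almost everywhere, and (assuming $f\not\equiv 0$) attains $A$ on a set of positive measure; since an integrable log-concave function is continuous on the relative interior of its support and attains its supremum there, this forces $\Vert f\Vert_\infty=A$. Moreover its top level set $\{f=\Vert f\Vert_\infty\}=\{f\ge\Vert f\Vert_\infty\}$ is convex, being a sublevel set of the convex function $-\log f$; calling it $K$ we get $f/\Vert f\Vert_\infty=\chi_K$. The second alternative is symmetric and gives $g/\Vert g\Vert_\infty=\chi_L$ with $L\subseteq\R^m$ convex. Conversely, if $A=\Vert f\Vert_\infty$ and $f/\Vert f\Vert_\infty=\chi_K$, then for $x\in K$ we have $\min\{1,g(y)/B\}=g(y)/B$ (using $g/B\le 1$), and for $x\notin K$ both $\min\{0,g(y)/B\}$ and $0\cdot g(y)/B$ vanish, so the integrand bound is an equality pointwise and hence so is its integral; the case $g/\Vert g\Vert_\infty=\chi_L$ is identical.

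The argument is short, and the only point requiring care — which I would regard as the crux — is the last reduction: passing from the purely measure-theoretic statement ``$f/A\in\{0,1\}$ almost everywhere'' to the geometric identity ``$f/\Vert f\Vert_\infty=\chi_K$ for a convex body $K$''. This is exactly where the structure of log-concave functions enters (convexity of level sets and continuity on the interior of the support), and where one must read all equalities in the standard sense for log-concave functions, i.e.\ up to the boundary of the support.
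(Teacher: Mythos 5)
Your proposal is correct and follows essentially the same approach as the paper: both arguments rest on the pointwise bound $\min\{a,b\}\ge ab$ for $a,b\in[0,1]$, and both handle equality by using continuity of log-concave functions on the interior of their supports to rule out the case where both $f/A$ and $g/B$ take values strictly between $0$ and $1$ on sets of positive measure. You spell out a few steps the paper leaves implicit (the convexity of the top level set, and the fact that equality forces $A=\Vert f\Vert_\infty$), but the underlying argument is the same.
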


\begin{proof}
The inequality is trivial since, as both $\frac{f(x)}{A}$ and $\frac{g(y)}{B}$ belong to the interval $[0,1]$, their minimum is greater than or equal to their product. It is clear that if $\frac{f(x)}{A}$ or $\frac{g(y)}{B}$ is a characteristic function, then there is equality. Assume now that there is equality in this inequality and assume that $\frac{g(y)}{B}$ is not a characteristic function. Then, since log-concave functions are continuous in the interior of their supports, there exists $y_0\in \R^m$ and a neighborhood $U$ of $y_0$ such that $0<\frac{g(y)}{B}<1$ for every $y\in U$. Consequently, for almost every $x\in\R^i$, we have that $\frac{f(x)}{A}=0,1$ and then $\frac{f(x)}{A}$ is a characteristic function.
\end{proof}

The next lemma is an extension of Lemma \ref{Ortholemma} to log-concave functions. We are given two log-concave functions, one on $\R^i$ and another on $\R^m$, which are then considered as orthogonal subspaces of $\R^{i+m}$. We build from them the ``convex hull'' function and estimate its integral from below by the integrals of $f$ and $g$.

\begin{lemma}\label{orthoRS}
Let $f\in\mathcal{F}(\R^i)$ and $g\in\mathcal{F}(\R^m)$, $i,m\in\N$. Let us define for any $z_1\in\R^i$ and $z_2\in\R^m$ $F(z_1,z_2):=f(z_1)\chi_{\{0\}}(z_2)$
and $G(z_1,z_2):=g(z_2)\chi_{\{0\}}(z_1)$.
Then
\begin{equation}\label{eq:ProjSect}
\max\{\Vert f\Vert_\infty, \Vert g\Vert_\infty\}\int_{\R^{i+m}}F\tilde{\star}G(z)dz\geq{m+i\choose i}^{-1}
\int_{\R^i}f(x)dx\int_{\R^m}g(y)dy.
\end{equation}
Equality holds if and only if $\frac{f}{\Vert f\Vert_\infty}=\chi_K$ and $\frac{g}{\Vert g\Vert_\infty}=\chi_L$ for some convex bodies $K\in\mathcal{K}^i$, $L\in\mathcal{K}^m$, with $0\in K$ and $0\in L$, and such that $\Vert f\Vert_{\infty}=\Vert g\Vert_{\infty}$.
\end{lemma}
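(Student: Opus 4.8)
The plan is to reduce the statement, via the layer-cake formula, to the geometric inequality of Lemma~\ref{Ortholemma} applied level by level, combined with the correlation-type estimate of Lemma~\ref{lem:MinLowerBound}. Set $M:=\max\{\|f\|_\infty,\|g\|_\infty\}$. The one genuinely geometric ingredient is the superlevel-set containment
\[
\{z\in\R^{i+m}:F\tilde{\star}G(z)\ge\tau\}\ \supseteq\ \conv\bigl(\{f\ge\tau\}\times\{0\},\{0\}\times\{g\ge\tau\}\bigr),\qquad \tau>0,
\]
which I would check directly from the definitions: if $z_1\in t\{f\ge\tau\}$ and $z_2\in(1-t)\{g\ge\tau\}$ for some $t\in(0,1)$, then taking $x=(z_1/t,0)$ and $w=(0,z_2/(1-t))$ in the supremum defining $F\tilde{\star}G$ gives $F(x)^tG(w)^{1-t}=f(z_1/t)^tg(z_2/(1-t))^{1-t}\ge\tau^t\tau^{1-t}=\tau$; and since $\{f\ge\tau\}$ and $\{g\ge\tau\}$ are convex, the convex hull on the right equals $\bigcup_{t\in[0,1]}\bigl(t\{f\ge\tau\}\bigr)\times\bigl((1-t)\{g\ge\tau\}\bigr)$, the cases $t\in\{0,1\}$ contributing only lower-dimensional pieces, irrelevant for volume. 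I may assume $f$ and $g$ have full-dimensional support (otherwise $\int_{\R^i}f\int_{\R^m}g=0$ and there is nothing to prove), so that for a.e.\ $\tau$ below the respective sup-norms these level sets are genuine convex bodies.

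Then I would integrate. By the layer-cake formula and the containment,
\[
\int_{\R^{i+m}}F\tilde{\star}G(z)\,dz=\int_0^\infty\vol_{i+m}\bigl(\{F\tilde{\star}G\ge\tau\}\bigr)\,d\tau\ \ge\ \int_0^\infty\vol_{i+m}\Bigl(\conv\bigl(\{f\ge\tau\}\times\{0\},\{0\}\times\{g\ge\tau\}\bigr)\Bigr)\,d\tau,
\]
and Lemma~\ref{Ortholemma} (applied with the two auxiliary points equal to the origins of $\R^m$ and $\R^i$; for those $\tau$ at which one level set is empty the corresponding factor vanishes and the estimate is trivial) bounds the integrand below by $\binom{i+m}{i}^{-1}\vol_i(\{f\ge\tau\})\vol_m(\{g\ge\tau\})$. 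Next, since $\{(x,y):\min\{f(x),g(y)\}\ge\tau\}=\{f\ge\tau\}\times\{g\ge\tau\}$, the layer-cake formula once more and Lemma~\ref{lem:MinLowerBound} with $A=B=M$ (using $\min\{f,g\}=M\min\{f/M,g/M\}$) give
\[
\int_0^\infty\vol_i(\{f\ge\tau\})\vol_m(\{g\ge\tau\})\,d\tau=\int_{\R^{i+m}}\min\{f(x),g(y)\}\,dx\,dy\ \ge\ \frac1M\int_{\R^i}f\int_{\R^m}g.
\]
Chaining the last three displays yields $M\int_{\R^{i+m}}F\tilde{\star}G\ge\binom{i+m}{i}^{-1}\int_{\R^i}f\int_{\R^m}g$, which is \eqref{eq:ProjSect}.

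For the equality characterization, the ``if'' direction I would verify by a direct computation: if $f=M\chi_K$ and $g=M\chi_L$ with $0\in K$, $0\in L$ and $\|f\|_\infty=\|g\|_\infty=M$, then $F=M\chi_{K\times\{0\}}$, $G=M\chi_{\{0\}\times L}$, hence $F\tilde{\star}G=M\chi_{\conv(K\times\{0\},\{0\}\times L)}$, and the equality case of Lemma~\ref{Ortholemma} (valid since $0\in K$, $0\in L$) gives $\int F\tilde{\star}G=M\binom{i+m}{i}^{-1}\vol_i(K)\vol_m(L)$, so that $M\int F\tilde{\star}G=\binom{i+m}{i}^{-1}\int f\int g$. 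For the converse I would trace equality back through the chain: equality in the last display of the previous paragraph forces, by Lemma~\ref{lem:MinLowerBound}, that one of $f,g$ --- say $f$ --- equals $\|f\|_\infty\chi_K$; equality in the Lemma~\ref{Ortholemma} step forces $0\in\{f\ge\tau\}$ and $0\in\{g\ge\tau\}$ for a.e.\ $\tau$, i.e.\ $f(0)=\|f\|_\infty$ and $g(0)=\|g\|_\infty$; and equality in the level-set containment, analyzed through its $z_1$-slices using the explicit form of $F\tilde{\star}G$ when $f=\|f\|_\infty\chi_K$, forces first $\|g\|_\infty=\|f\|_\infty$ and then that $g$ takes only the values $0$ and $\|g\|_\infty$, i.e.\ $g=\|g\|_\infty\chi_L$ with $L=\supp g$ convex and $0\in L$. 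I expect this converse to be the main obstacle: the two cited lemmas only yield that \emph{one} of the functions is a scaled indicator, and upgrading this to \emph{both} functions while matching the sup-norms requires squeezing the extra information out of the slicewise behaviour of the superlevel-set containment. This converse I would carry out in Section~\ref{sec:Equality cases}.
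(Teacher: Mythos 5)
Your proposal is correct and follows essentially the same route as the paper: layer-cake decomposition, superlevel-set containment reduced to Lemma~\ref{Ortholemma}, then Lemma~\ref{lem:MinLowerBound} to conclude, with the equality case traced back through each step. The only cosmetic difference is that you obtain the superlevel-set inclusion directly from the product form $f(z_1/t)^t g(z_2/(1-t))^{1-t}\ge\tau$, whereas the paper first bounds $F\tilde{\star}G$ below by $\sup_\theta\min\{f(z_1/(1-\theta)),g(z_2/\theta)\}$ and then reads off the same convex-hull level set.
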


\begin{proof}
Notice that  by definition of the operation $\tilde{\star}$ and of $F$ and $G$, we have that
\[ F\tilde{\star} G (z_1, z_2) =  \sup_{\theta \in (0,1)} f\left(\frac{z_1}{1-\theta}\right)^{1-\theta} g\left(\frac{z_2}{\theta}\right)^\theta \ge \sup_{\theta \in (0,1)} \min \left\{ f\left(\frac{z_1}{1-\theta}\right),  g\left(\frac{z_2}{\theta}\right)\right\}. \]
In particular, setting $A:=\Vert F\tilde{\star} G\Vert_\infty = \max\{\Vert f\Vert_\infty, \Vert g\Vert_\infty\}$,
we compute
\begin{equation*}
\begin{split}
&\frac1{A}\int_{\R^{i+m}} F\tilde{\star}G(z)dz \\
&=\int_0^1 \vol_n\left(\left\{(z_1,z_2):\sup_{0<\theta<1}f\left(\frac{z_1}{\theta}\right)^{\theta}
g\left(\frac{z_1}{1-\theta}\right)^{1-\theta}\geq tA \right\}\right)dt\\
& \stackrel{(a)}{\geq}\int_0^1 \vol_n\left(\left\{(z_1,z_2):\sup_{0<\theta<1}\min \left\{f\left(\frac{z_1}{1-\theta}\right),  g\left(\frac{z_2}{\theta}\right)\right\}\geq tA\right\}\right)dt\\
& =\int_0^1\vol_n\left( \conv\left(\left\{(x,0)\in\R^n:f(x)\geq tA\right\}\cup\left\{(0,y)\in\R^n:g(y)\geq tA\right\}\right)\right)dt \\
& \stackrel{(b)}{\geq}{i+m\choose i}^{-1}\int_0^1\vol_i\left(\left\{x\in\R^i:f(x)\geq tA\right\}\right)\vol_m\left(\left\{y\in\R^m:g(y)\geq tA\right\}\right)dt \\
& ={i+m\choose i}^{-1}\int_0^1\left[\int_{\R^i}\chi_{\{f(x)\geq tA\}}(x)dx\int_{\R^m}\chi_{\{g(y)\geq tA\}}(y)dy\right]dt \\
& ={i+m\choose i}^{-1}\int_{\R^i}\int_{\R^m}\min\left\{\frac{f(x)}{A},\frac{g(y)}{A}\right\}dydx\\
& \stackrel{(c)}{\geq}
{i+m\choose i}^{-1}\int_{\R^i}\frac{f(x)}{A}dx\int_{\R^m}\frac{g(y)}{A}dy,
\end{split}
\end{equation*}
where (a) follows from the trivial inclusion between the sets, (b) follows from Lemma \ref{Ortholemma} and (c) follows from Lemma \ref{lem:MinLowerBound}.

The proof of the equality case will appear in Subsection \ref{subsub:orthoRS}.
\end{proof}

Before using Lemma \ref{orthoRS} to prove Theorem \ref{th:splittRS}, let us show an analogous result to Lemma \ref{orthoRS}, when considering the function $F\otimes G(\cdot,t)$ for a fixed $t$ instead of $F\tilde{\star}G$, which is obtained taking the supremum in $t$.
%This is somewhat connected with a \emph{linearized} Pr\'ekopa-Leindler inequality (cf.~\eqref{eq:PrekopaLeindler})
%for $f$ and $g$ log-concave and defined in orthogonal subspaces (cf.~\cite[Theorem 1]{CoSaYe}).\textcolor{red}{Is it??? This is not linearized. We should justify more this statement}
\begin{lemma}\label{lem:linPL}
Let $f\in\F(\R^i)$, $g\in\F(\R^m)$.
Let us define for any $z_1\in\R^i$ and $z_2\in\R^m$,  $F(z_1,z_2):=f(z_1)\chi_{\{0\}}(z_2)$ and $G(z_1,z_2):=g(z_2)\chi_{\{0\}}(z_1)$,
then for every $t\in[0,1]$
\[
\Vert f\Vert_\infty^{t}\Vert g\Vert_\infty^{1-t}\int_{\R^{i+m}}F\otimes G(z,t)dz \ge  t^{m}(1-t)^i\int_{\R^i}f (x)dx\int_{\R^m}g (y)dy.
\]
\end{lemma}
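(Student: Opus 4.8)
The plan is to avoid any inequality machinery altogether and simply compute $F\otimes G(\cdot,t)$ by hand; note in particular that Pr\'ekopa--Leindler \eqref{eq:PrekopaLeindler} is useless here, since $F$ is supported on the lower-dimensional set $\R^i\times\{0\}$ and hence $\int_{\R^{i+m}}F\,dz=0$, so that bound degenerates to the trivial $\int F\otimes G(\cdot,t)\ge 0$. The proof will instead rest on an explicit identity plus two one-line pointwise estimates.

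\emph{Step 1 (explicit form of $F\otimes G(\cdot,t)$).} By the definition of $\otimes$, $F\otimes G(z_1,z_2,t)=\sup F(w_1)^{1-t}G(w_2)^{t}$ over $(z_1,z_2)=(1-t)w_1+tw_2$. Since $F(w_1)>0$ forces $w_1=(a,0)$ with $a\in\R^i$, and then $F(w_1)=f(a)$, and likewise $G(w_2)>0$ forces $w_2=(0,b)$ with $b\in\R^m$ and $G(w_2)=g(b)$, the only admissible decomposition with a nonzero contribution has $z_1=(1-t)a$, $z_2=tb$. Hence, for $t\in(0,1)$,
\[
F\otimes G(z_1,z_2,t)=f\!\left(\tfrac{z_1}{1-t}\right)^{1-t}g\!\left(\tfrac{z_2}{t}\right)^{t}.
\]
For $t\in\{0,1\}$ the right-hand side of the asserted inequality vanishes (recall $i,m\ge 1$), so there is nothing to prove, and we may assume $t\in(0,1)$.

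\emph{Step 2 (integrate and rescale).} By Fubini and the substitutions $z_1=(1-t)x$ on $\R^i$ and $z_2=ty$ on $\R^m$ (Jacobians $(1-t)^i$ and $t^m$),
\[
\int_{\R^{i+m}}F\otimes G(z,t)\,dz=(1-t)^i\, t^m\int_{\R^i}f(x)^{1-t}dx\int_{\R^m}g(y)^{t}dy.
\]

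\emph{Step 3 (remove the exponents).} Since $0\le f(x)\le\Vert f\Vert_\infty$, one has $f(x)^{1-t}=f(x)\,f(x)^{-t}\ge\Vert f\Vert_\infty^{-t}f(x)$ (trivial when $f(x)=0$), so $\Vert f\Vert_\infty^{t}\int_{\R^i}f^{1-t}\ge\int_{\R^i}f$, and symmetrically $\Vert g\Vert_\infty^{1-t}\int_{\R^m}g^{t}\ge\int_{\R^m}g$. Multiplying the identity of Step 2 by $\Vert f\Vert_\infty^{t}\Vert g\Vert_\infty^{1-t}$ and inserting these two bounds yields the claim. There is no real obstacle in this argument; the only places requiring a moment's care are getting the scaling factors in Step 1 right and disposing of the degenerate endpoints $t\in\{0,1\}$.
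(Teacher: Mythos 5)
Your proof is correct and is essentially the same as the paper's: both compute $F\otimes G(\cdot,t)$ explicitly as $f(z_1/(1-t))^{1-t}g(z_2/t)^t$, integrate via Fubini and the change of variables picking up the Jacobian factor $t^m(1-t)^i$, and then use the pointwise bound $f^{1-t}\ge\Vert f\Vert_\infty^{-t}f$ (and its analogue for $g$) to pass from $\int f^{1-t}\int g^t$ to $\int f\int g$. Your write-up simply supplies a few details the paper leaves implicit — the derivation of the explicit formula from the definition of $\otimes$, the disposal of the endpoints $t\in\{0,1\}$, and the (correct) aside that Pr\'ekopa--Leindler is vacuous here because $F$ and $G$ have lower-dimensional support.
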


\begin{proof}
Notice that for every $t\in[0,1]$ and every $z_1\in\R^i$, $z_2\in\R^m$
\[ F\otimes G (z_1, z_2,t) =   f\left(\frac{z_1}{1-t}\right)^{1-t} g\left(\frac{z_2}{t}\right)^t. \]
In particular, $
\Vert F\otimes G(\cdot,t)\Vert_\infty =\Vert f\Vert_\infty^{1-t}\Vert g\Vert_\infty^{t}.
$

Therefore
\begin{eqnarray*}
	 \int_{\R^{i+m}} F\otimes G(z,t)dz& = &
	\int_{\R^{i}}
	 f\left(\frac{z_1}{1-t}\right)^{1-t}dz_1 \int_{\R^m}g\left(\frac{z_2}{t}\right)^t dz_2\\
	 & = & t^{m}(1-t)^i\int_{\R^i}f(x)^{1-t}dx\int_{\R^m}g(y)^t dy\\
	 & \ge & \Vert f\Vert_\infty^{1-t}\Vert g\Vert_\infty^{t} t^{m}(1-t)^i\int_{\R^i}\frac{f(x)}{\Vert f\Vert_\infty}dx\int_{\R^m}\frac{g(y)}{\Vert g\Vert_\infty}dy.
 \end{eqnarray*}

\end{proof}

\begin{rmk}\label{rmk:worsebound}
Notice that from the latter inequality one can easily deduce that
\[
\max\{\Vert f\Vert_\infty,\Vert g\Vert_\infty\}\int_{\R^{i+m}}F\tilde{\star}G(z)dz \geq
\frac{i^im^m}{(i+m)^{i+m}}\int_{\R^i}f(x)dx\int_{\R^m}g(y)dy,
\]
which is slightly worse than inequality \eqref{eq:ProjSect} in Lemma \ref{orthoRS}.
Indeed
\begin{equation*}
\begin{split}
 \max\{\Vert f\Vert_\infty,\Vert g\Vert_\infty\}\int_{\R^{i+m}}F\tilde{\star}G(z)dz& =\max\{\Vert f\Vert_\infty,\Vert g\Vert_\infty\}\int_{\R^{i+m}}\sup_{t\in[0,1]}F\otimes G(z,t)dz\\
& \geq \sup_{t\in[0,1]} \left( \Vert f\Vert_\infty^{t}\Vert g\Vert_\infty^{1-t}  \int_{\R^{i+m}}F\otimes G(z,t)dz\right) \\
& \geq \sup_{t\in[0,1]}  t^i(1-t)^{m} \int_{\R^i}f(x)dx\int_{\R^m}g(y)dy\\
& = \frac{i^im^m}{(i+m)^{i+m}}\int_{\R^i}f(x)dx\int_{\R^m}g(y)dy.
\end{split}
\end{equation*}
\end{rmk}

We are now in a position to prove our main Theorem \ref{th:splittRS}, which also serves as the main tool for the proof of the reverse local Loomis-Whitney inequality as well as the proof of other Rogers-Shephard type inequalities.

\begin{proof}[Proof of Theorem \ref{th:splittRS}]
Let us assume, without loss of generality %that  $x_0=0$ (otherwise, consider a suitable translation of $f$) and
 that $H=\textrm{span}\{e_1,\dots, e_i\}$ and $H^\perp=\textrm{span}\{e_{i+1},\dots,e_n\}$ (otherwise consider $f\circ U$ for a suitable $U\in O(n)$). We shall
consider the symmetral $S_H f$ restricted to $H$ and to $H^\perp$ respectively, and use these two as the functions to which
Lemma \ref{orthoRS} is applied. More precisely, for $(x,y)\in H\times H^{\bot}=\R^i\times \R^{n-i}$ define
$F(x,y):=S_Hf(x)\chi_{\{0\}}(y)$ and $G(x,y):=S_Hf(y)\chi_{\{0\}}(x)$ (where we have identified $x = (x,0)$ and $y = (0,y)$).
Then clearly $\max\{\Vert S_Hf|_H\Vert_\infty, \Vert S_Hf|_{H^\bot}\Vert_\infty\} = \|S_Hf\|_\infty = \|f\|_\infty$.
By Lemma \ref{orthoRS} we thus know that
\[ \|f\|_\infty \int_{\R^{n}}F\tilde{\star}G(z)dz\geq{n\choose i}^{-1}
\int_{H}S_Hf(x)dx\int_{H^\bot}S_Hf(y)dy. \]
Since both $F$ and $G$ are bounded from above by $S_Hf$ we have by \eqref{eq:prodishull} that
$F\tilde{\star}G \le S_H f$ on all of $\R^n$. Thus, using \eqref{eq:integsymm} we see that
\begin{eqnarray*}
\Vert f\Vert_{\infty} \int_{\R^n}f(z)dz & = & \Vert f\Vert_{\infty} \int_{\R^n}S_Hf(z)dz\\
&{\geq}& \Vert f\Vert_{\infty}  \int_{\R^n}F\tilde{\star}G(z)dz\\
&{\geq}& {n\choose i}^{-1} \int_{H}S_Hf(x)dx \int_{H^\bot }S_Hf(y)dy.
\end{eqnarray*}
Finally, using \eqref{eq:integsymmsubspace} we can see that $\int_{H^\bot }S_Hf(y)dy = \int_{H^\bot}f(y)dy$ and
\eqref{eq:projequalssymm} to notice that the restriction to $H$ of $S_Hf$ equals $P_Hf$, concluding
\[
	\Vert f\Vert_{\infty} \int_{\R^n}f(z)dz
\ge {n\choose i}^{-1} \int_{H}P_Hf(x)dx \int_{H^\bot }f(y)dy,
\]
as claimed. The equality case will be proven in Subsection \ref{subsub:splittRS}.
\end{proof}

The next lemma is the first step for the proof of Theorem \ref{SplittingColesanti} before symmetrizing $f$ in order to obtain the projection of $f$ onto $H$ instead of its section.

%\textcolor{red}{Before proving it, we would like to do a little discussion about the extremal functions.
%It is well-known that for any $K\in\K^n$ with $0\in\inter(K)$ then $\exp(-\Vert\cdot\Vert_K)\in\mathcal F(\R^n)$.
%In this paper, we exceptionally allow $K$ such that $0\in\bd(K)$. In such case, we define
%$\exp(-\Vert 0\Vert_K):=1$ (i.e.~$\Vert 0\Vert_K:=0$), and in general one has to understand it so that
%$\Vert x\Vert_K:=\lim_{i\rightarrow\infty}\Vert x\Vert_{K_i}$, for some sequence $(K_i)_i\subset\K^n$
%with $0\in\cap_{i=1}^\infty \inter(K_i)$ and such that $\lim_{i\rightarrow\infty}K_i=K$.}
%\textcolor{blue}{After this note, we should recheck all results in which we assume $0\in\inter(K)$, since
%we can include (in general) the case of $0\in\bd(K)$ too!!}

\begin{lemma}\label{th:SplittColesLambda}
Let $f\in\mathcal{F}(\R^n)$ be such that $f(0)=\Vert f\Vert_{\infty}$, $\lambda\in(0,1)$, and $H\in\mathcal{L}^n_i$. Then
\begin{equation}\label{splittingColLambda}
\int_{\R^n}f(z)dz \ge (1-\lambda)^i\lambda^{n-i} \int_{H} f(x)^{1-\lambda}dx \int_{H^\perp} f(y)^{\lambda}dy.
\end{equation}
Moreover, equality holds if and only if for every $z=(x,y)\in H\times H^\perp$, $\frac{f(z)}{\Vert f\Vert_\infty}=\,\exp(-\Vert x\Vert_{K}-\Vert y\Vert_{L})$ for some $K\subseteq H$ and $L\subset H^\bot$ such that $0\in K\times L$.
\end{lemma}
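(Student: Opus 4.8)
The plan is to deduce \eqref{splittingColLambda} from a single direct application of log-concavity along the orthogonal splitting $\R^n=H\oplus H^\perp$; no symmetrization is needed at this stage (that is postponed to the proof of Theorem~\ref{SplittingColesanti}). Writing each $z\in\R^n$ uniquely as $z=x+y$ with $x\in H$ and $y\in H^\perp$, I would start from the convex-combination identity $x+y=(1-\lambda)\tfrac{x}{1-\lambda}+\lambda\tfrac{y}{\lambda}$, whose endpoints $\tfrac{x}{1-\lambda}$ and $\tfrac{y}{\lambda}$ lie in $H$ and $H^\perp$ respectively. Log-concavity of $f$ then gives the pointwise bound
\[
f(x+y)\ \ge\ f\!\left(\tfrac{x}{1-\lambda}\right)^{1-\lambda}f\!\left(\tfrac{y}{\lambda}\right)^{\lambda}\qquad\text{for all }x\in H,\ y\in H^\perp.
\]

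Next I would integrate this over $\R^n$ and use Fubini for the decomposition $\R^n=H\oplus H^\perp$; since the right-hand side is a product of a function of $x$ alone and a function of $y$ alone, it factors (Tonelli) as $\int_H f(\tfrac{x}{1-\lambda})^{1-\lambda}\,dx\cdot\int_{H^\perp}f(\tfrac{y}{\lambda})^{\lambda}\,dy$. The linear changes of variables $x\mapsto(1-\lambda)x$ on $H\cong\R^i$ and $y\mapsto\lambda y$ on $H^\perp\cong\R^{n-i}$ produce the Jacobian factors $(1-\lambda)^i$ and $\lambda^{n-i}$, giving exactly \eqref{splittingColLambda}. This also shows automatically that both marginal integrals on the right are finite (the left side being finite) and positive (as $f(0)=\Vert f\Vert_\infty>0$), so nothing further is needed for the inequality; in particular the hypothesis $f(0)=\Vert f\Vert_\infty$ plays no role here.

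That hypothesis does enter the equality analysis. Equality in \eqref{splittingColLambda} forces equality a.e.\ in the pointwise bound, hence, by continuity of log-concave functions on the interior of their support, $f(x+y)=f(\tfrac{x}{1-\lambda})^{1-\lambda}f(\tfrac{y}{\lambda})^{\lambda}$ throughout that open set. Specializing to $y=0$ and using $f(0)=\Vert f\Vert_\infty$ gives $f(x)=\Vert f\Vert_\infty^{\lambda}f(\tfrac{x}{1-\lambda})^{1-\lambda}$ on $H$; putting $\phi:=-\log(f|_H/\Vert f\Vert_\infty)$, this reads $\phi((1-\lambda)x)=(1-\lambda)\phi(x)$ for a nonnegative convex $\phi$ with $\phi(0)=0$. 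A standard homogenization argument (convexity plus $\phi(0)=0$ give $\phi(\mu x)\le\mu\phi(x)$ for $\mu\in[0,1]$, and iterating $\phi(rx)=r\phi(x)$ with $r=1-\lambda$ and combining yields $\phi(\mu x)\ge\mu\phi(x)$ as well) shows $\phi$ is positively $1$-homogeneous, hence the gauge $\Vert\cdot\Vert_K$ of a convex set $K=\{\phi\le1\}\subseteq H$ with $0\in K$, and $K$ is a body by integrability of $f$. Symmetrically $-\log(f|_{H^\perp}/\Vert f\Vert_\infty)=\Vert\cdot\Vert_L$ for a convex body $L\subseteq H^\perp$ with $0\in L$. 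Plugging these back into the pointwise identity gives
\[
\frac{f(z)}{\Vert f\Vert_\infty}=\exp\bigl(-\Vert x\Vert_K-\Vert y\Vert_L\bigr)\qquad\text{for }z=(x,y)\in H\times H^\perp,
\]
and the converse implication is a direct check. In keeping with the rest of the paper, I would defer the careful write-up of this paragraph to Section~\ref{sec:Equality cases}.

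The inequality itself presents no real obstacle --- it is essentially three lines. The only delicate points are in the equality case: upgrading ``a.e.'' to ``everywhere'' on the interior of the support so that the measure-zero slices $y=0$ and $x=0$ may be used, and the homogenization step identifying a convex $\phi\ge0$ with $\phi(0)=0$ and $\phi((1-\lambda)\cdot)=(1-\lambda)\phi(\cdot)$ as a gauge; both are routine but should be handled with a little care.
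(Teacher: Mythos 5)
Your proof is correct and follows essentially the same route as the paper's: for the inequality, the same convex-combination split $z=(1-\lambda)\frac{x}{1-\lambda}+\lambda\frac{y}{\lambda}$ along $H\oplus H^\perp$, log-concavity, Fubini, and the Jacobian substitutions; for equality, the same specialization to the slices $y=0$ and $x=0$, positive $1$-homogeneity of $u=-\log(f/\Vert f\Vert_\infty)$ on each slice (you spell out the homogenization step that the paper only asserts), and substitution back into the pointwise identity.
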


\begin{proof}
For any $z=(z_1,z_2)\in H\times H^\bot$, the log-concavity of $f$ implies that
\begin{equation*}
\begin{split}
f(z) & =f\left((1-\lambda)\frac{z_1}{1-\lambda},\lambda\frac{z_2}{\lambda}\right)=f\left((1-\lambda)\left(\frac{z_1}{1-\lambda},0\right)+\lambda\left(0,\frac{z_2}{\lambda}\right)\right)\\
& \geq f\left(\frac{z_1}{1-\lambda},0\right)^{1-\lambda}f\left(0,\frac{z_2}{\lambda}\right)^\lambda,
\end{split}
\end{equation*}
and thus
\[
\begin{split}
\int_{\R^n}f(z)dz & \geq\int_{\R^n}f\left(\frac{z_1}{1-\lambda},0\right)^{1-\lambda}f\left(0,\frac{z_2}{\lambda}\right)^\lambda dz_1dz_2 \\
& =\int_H f\left(\frac{z_1}{1-\lambda},0\right)^{1-\lambda}dz_1 \int_{H^\bot} f\left(0,\frac{z_2}{\lambda}\right)^\lambda dz_2\\
& =(1-\lambda)^i\lambda^{n-i}\int_Hf(x)^{1-\lambda}dx\int_{H^\bot}f(y)^\lambda dy.
\end{split}
\]

The proof of the equality case is done in Subsection \ref{subsub:lemasplittcoleslambda}.
\end{proof}

\begin{proof}[Proof of Theorem \ref{SplittingColesanti}]
Let us define $\tilde{f}:=S_Hf$. By Lemma \ref{th:SplittColesLambda} we have that
\[
\begin{split}
\int_{\R^n}f(z)dz & =\int_{\R^n}\tilde{f}(z)dz\geq(1-\lambda)^i\lambda^{n-i} \int_H\tilde{f}(x)^{1-\lambda}dx\int_{H^\bot}\tilde{f}(y)^\lambda dy\\
& =(1-\lambda)^i\lambda^{n-i} \int_HP_Hf(x)^{1-\lambda}dx\int_{H^\bot}f(y)^\lambda dy,
\end{split}
\]
as desired.

The proof of the equality case can be found in Subsection \ref{subsub:thsplittcoleslambda}.
\end{proof}

\begin{rmk}
In Theorem \ref{SplittingColesanti} with the function $f(z)=\exp(-\Vert z\Vert_K)$, for some $K\in\K^n$ with $0\in K$, we
immediately obtain that
\[
\begin{split}
n!\vol_n(K) & =\int_{\R^n}f(z)dz \geq (1-\lambda)^i\lambda^{n-i}\int_H P_Hf(x)^{1-\lambda}dx\int_{H^\bot}f(y)^\lambda dy\\
&= (1-\lambda)^i\lambda^{n-i} \int_H e^{-\Vert x\Vert_{\frac{P_HK}{1-\lambda}}}dx \int_{H^\bot} e^{-\Vert y\Vert_{\frac{K\cap H^{\bot}}{\lambda}}}dy\\
&=(1-\lambda)^i\lambda^{n-i} i!\vol_i\left(\frac{P_HK}{1-\lambda}\right)(n-i)!\vol_{n-i}\left(\frac{K\cap H^\bot}{\lambda}\right)\\
&=i!(n-i)!\vol_i(P_HK)\vol_{n-i}(K\cap H^\bot),
\end{split}
\]
hence implying \eqref{eq:volumesplit}.
\end{rmk}

\subsection{Upper bounds for the integral of $f$}

This section is devoted to prove Theorems \ref{th:upperboundcharact} and \ref{th:upperboundexponent}, which give upper bounds for the integral of a log-concave function in terms of the integrals of its sections or projections.
We start by showing the proof of Theorem \ref{th:upperboundcharact}, which is, like in the case of \eqref{eq:upperbound}, a direct application of Fubini's theorem.

\begin{proof}[Proof of Theorem \ref{th:upperboundcharact}]
\begin{equation*}
	\begin{split}
	\int_{\R^n}f(z)dz&=\int_{H}\int_{x_0+H^{\bot}}f(y)dydx_0\\
	&\leq\int_H\int_{x_0+H^{\bot}}f(y)^{1-\lambda}\max_{x\in x_0+H^{\bot}}f(x)^{\lambda}dydx_0\\
	&\leq\int_H\max_{x\in x_0+H^{\bot}} f(x)^{1-\lambda}dx_0\max_{x_0\in H}\int_{x_0+H^{\bot}}f(y)^{\lambda}dy\\
	&=\int_HP_Hf^{1-\lambda}(x_0)dx_0\max_{x_0\in H}\int_{x_0+H^{\bot}}f|_{x_0+H^{\bot}}^{\lambda}(y)dy.
	\end{split}
	\end{equation*}
\end{proof}

The next lemma consists of a reverse inequality to Lemma \ref{lem:MinLowerBound}.
\begin{lemma}\label{lem:MinUpperBound}
Let $f\in\mathcal F(\R^n)$ and $g\in\mathcal F(\R^m)$.
Then
\[
\int_{\R^{n+m}}\min\left\{\frac{f(x)}{\Vert f\Vert_\infty},\frac{g(y)}{\Vert g\Vert_\infty}\right\}dxdy\leq
{n+m\choose n}\int_{\R^n}\frac{f(x)}{\Vert f\Vert_\infty}dx\int_{\R^m}\frac{g(y)}{\Vert g\Vert_\infty}dy.
\]
Equality holds if and only if $\frac{f(x)}{\Vert f\Vert_\infty}=\exp(-\Vert x-x_0\Vert_K)$ and $\frac{g(y)}{\Vert g\Vert_\infty}=\exp(-\Vert y-y_0\Vert_L)$ for $x_0\in\R^n$, $y_0\in\R^m$, and some convex bodies $K\subseteq\R^n$ and $L\subseteq\R^m$ with $0\in K$ and $0\in L$.
\end{lemma}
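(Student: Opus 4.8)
The plan is to reduce the statement to a one–dimensional inequality about the volumes of the superlevel sets of $f$ and $g$, and then settle that inequality by Chebyshev's correlation inequality. First I would normalize: both sides are unchanged when $f$ is replaced by $\lambda f$ and $g$ by $\mu g$ for positive constants $\lambda,\mu$, so assume $\Vert f\Vert_\infty=\Vert g\Vert_\infty=1$ and write $f=e^{-u}$, $g=e^{-v}$ with $u,v$ convex and $\min u=\min v=0$ (the minima are attained because $f,g$ are integrable and log-concave). Put $A(s):=\vol_n(\{u\le s\})$ and $B(s):=\vol_m(\{v\le s\})$ for $s\ge 0$. By the layer-cake formula and the change of variables $t=e^{-s}$,
\[
\int_{\R^{n+m}}\min\{f(x),g(y)\}\,dx\,dy=\int_0^1\vol_n(\{f\ge t\})\vol_m(\{g\ge t\})\,dt=\int_0^\infty A(s)B(s)e^{-s}\,ds,
\]
and likewise $\int_{\R^n}f=\int_0^\infty A(s)e^{-s}\,ds$ and $\int_{\R^m}g=\int_0^\infty B(s)e^{-s}\,ds$. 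Thus the lemma is equivalent to
\[
\int_0^\infty A(s)B(s)e^{-s}\,ds\le\binom{n+m}{n}\int_0^\infty A(s)e^{-s}\,ds\ \int_0^\infty B(s)e^{-s}\,ds .
\]

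The structural input is that, by the Brunn--Minkowski inequality applied to the convex sets $\{u\le s\}$ (using $\lambda\{u\le s_1\}+(1-\lambda)\{u\le s_2\}\subseteq\{u\le\lambda s_1+(1-\lambda)s_2\}$), the function $A^{1/n}$ is concave and nonnegative on $[0,\infty)$, which forces $s\mapsto A(s)/s^n$ to be nonincreasing on $(0,\infty)$; similarly $s\mapsto B(s)/s^m$ is nonincreasing, while $B$ itself is nondecreasing. Now I would apply Chebyshev's correlation inequality for oppositely monotone functions twice. First, with respect to the probability measure $d\nu_1=\tfrac{s^n e^{-s}}{n!}\,ds$ on $(0,\infty)$, the functions $s\mapsto A(s)/s^n$ (nonincreasing) and $s\mapsto B(s)$ (nondecreasing) are oppositely monotone, so
\[
\int_0^\infty A(s)B(s)e^{-s}\,ds\le\frac{1}{n!}\int_0^\infty A(s)e^{-s}\,ds\ \int_0^\infty B(s)s^n e^{-s}\,ds .
\]
Second, with respect to $d\nu_2=\tfrac{s^m e^{-s}}{m!}\,ds$, the functions $s\mapsto B(s)/s^m$ (nonincreasing) and $s\mapsto s^n$ (nondecreasing) are oppositely monotone, so
\[
\int_0^\infty B(s)s^n e^{-s}\,ds\le\frac{(n+m)!}{m!}\int_0^\infty B(s)e^{-s}\,ds .
\]
Multiplying the two bounds yields $\int_0^\infty ABe^{-s}\le\frac{(n+m)!}{n!\,m!}\int_0^\infty Ae^{-s}\int_0^\infty Be^{-s}=\binom{n+m}{n}\int_0^\infty Ae^{-s}\int_0^\infty Be^{-s}$, which is exactly what was needed; the gamma integrals $\int_0^\infty s^pe^{-s}\,ds=p!$ make the constant come out as the binomial coefficient.

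For the equality case I would run the two Chebyshev steps backwards. Since $s\mapsto s^n$ is strictly increasing for $n\ge1$, equality in the second step forces $B(s)/s^m$ to be $\nu_2$-a.e.\ constant, i.e.\ $B(s)=c\,s^m$; then $B$ is strictly increasing, so equality in the first step forces $A(s)/s^n$ to be constant, i.e.\ $A(s)=c'\,s^n$. Now $A(s)=\vol_n(\{u\le s\})=c's^n$ together with the equality condition in Brunn--Minkowski (homothety of the level sets, with the common center pinned down by nestedness and by $u$ attaining its minimum at a single point) identifies $u$ as a translated gauge, $u(x)=\Vert x-x_0\Vert_K$ with $K\in\K^n$, $0\in K$; symmetrically $v(y)=\Vert y-y_0\Vert_L$. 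Conversely these functions give equality by the computation above. The step I expect to be the main obstacle is the equality analysis — in particular passing rigorously from $A(s)=c's^n$ and the homothety statement to the conclusion that $u$ is a translated gauge — whereas the inequality itself is short once the layer-cake reduction and the monotonicity of $A(s)/s^n$ are in place.
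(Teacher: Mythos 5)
Your proof is correct, and it takes a genuinely different route from the paper's. After the shared layer-cake reduction, the paper works in dimension $n+m$: it introduces the convex superlevel sets $C_t=\{(x,y):f(x)g(y)\ge t\Vert f\Vert_\infty\Vert g\Vert_\infty\}$, observes that $P_HC_t=K_t\times\{0\}$ and $\max_{\bar x}\vol_m(C_t\cap(\bar x+H^\bot))=\vol_m(L_t)$, and then applies the classical Rogers--Shephard inequality \eqref{eq:volumesplit} to each $C_t$ to get $\vol_{n+m}(C_t)\ge\binom{n+m}{n}^{-1}\vol_n(K_t)\vol_m(L_t)$, finishing by $\int_0^1\vol_{n+m}(C_t)\,dt=\int\int\frac{f(x)g(y)}{\Vert f\Vert_\infty\Vert g\Vert_\infty}\,dx\,dy$. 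You instead reduce to a one-dimensional statement about $A(s)=\vol_n(\{u\le s\})$ and $B(s)=\vol_m(\{v\le s\})$, extract the key structural fact that $A^{1/n}$ is concave (so $A(s)/s^n$ is nonincreasing), and then run Chebyshev's correlation inequality twice against the probability densities $s^ne^{-s}/n!$ and $s^me^{-s}/m!$, with the binomial coefficient emerging as a ratio of Gamma integrals. This is more elementary — it does not invoke \eqref{eq:volumesplit} as a black box — and it makes the $\tfrac1n$-concavity of the level-set volume the visible engine; it is also very much in the spirit of Berwald's inequality, which the authors use elsewhere and reprove in the appendix. What the paper's route buys is thematic: in a paper about functional Rogers--Shephard inequalities the reduction to \eqref{eq:volumesplit} makes the kinship explicit, and the equality case then feeds directly into the R--S equality analysis of Subsection 6.1.5, where the homothety of the $K_t$'s and $L_t$'s is traced out in full detail. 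Your equality treatment is sketchier there but sound: since $\nu_1,\nu_2$ have full support on $(0,\infty)$, equality in each Chebyshev step with one factor non-constant forces the other to be constant, giving $B(s)=cs^m$ and then $A(s)=c's^n$, and the remaining step — passing from $A(s)=c's^n$ and the Brunn--Minkowski equality case to $u(x)=\Vert x-x_0\Vert_K$ — is, as you note yourself, the genuinely delicate part for both arguments. One small thing worth making explicit if you write this up: $A(0)=0$ forces the nested homothetic level sets to shrink to a single point $x_0$, which then pins down the center of all the homotheties.
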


\begin{proof}
Denoting by
\[
K_t:=\{x\in \R^n:f(x)\geq t\Vert f\Vert_\infty\}\quad\text{and}\quad L_t:=\{y\in \R^m:g(y)\geq t\Vert g\Vert_\infty\}
\]
then
\[
\begin{split}
\int_{\R^{n+m}}\min\left\{\frac{f(x)}{\Vert f\Vert_\infty},\frac{g(y)}{\Vert g\Vert_\infty}\right\}dxdy &
=\int_0^1\int_{\R^n}\int_{\R^m}\chi_{K_t}(x)\chi_{L_t}(y)dydxdt \\
& =\int_0^1\vol_n(K_t)\vol_{m}(L_t)dt.
\end{split}
\]
On the other hand, denoting by
\[
C_t:=\{(x,y)\in \R^n\times \R^m:f(x)g(y)\geq t\Vert f\Vert_\infty\Vert g\Vert_\infty\},
\]
and $H=\lin\{e_1,\dots,e_n\}$ we observe that
\[
\begin{split}
P_HC_t & =\{(x,0)\in \R^{n+m}:\max_{y\in \R^m}f(x)g(y)\geq t\Vert f\Vert_\infty\Vert g\Vert_\infty\} \\
& =\{(x,0)\in \R^{n+m}:f(x)\geq t\Vert f\Vert_\infty\}=K_t\times\{0\}^{m}
\end{split}
\]
and that for any $\bar{x}\in K_t$
\[
\begin{split}
C_t\cap(\bar{x}+H^\bot) & =\{(\bar{x},y)\in \R^{n+m}: f(\bar{x})g(y)\geq t\Vert f\Vert_\infty\Vert g\Vert_\infty\}\\
& =\{\bar{x}\}\times L_{t\frac{\Vert f\Vert_\infty}{f(\bar{x})}}.
\end{split}
\]
By Rogers-Shephard inequality \eqref{eq:volumesplit} then
\[
\begin{split}
\vol_{n+m}(C_t) & \geq{n+m\choose n}^{-1}\vol_n(P_HC_t)\max_{\bar{x}\in \R^n}\vol_{m}(C_t\cap(\bar{x}+H^\bot)) \\
& ={n+m\choose n}^{-1}\vol_n(K_t\times\{0\}^{m})\max_{\bar{x}\in \R^n}\vol_{m}(\{\bar{x}\}\times L_{t\frac{\Vert f\Vert_\infty}{f(\bar{x})}})\\
& ={n+m\choose n}^{-1}\vol_n(K_t)\vol_{m}(L_t).
\end{split}
\]
Hence we conclude that
\[
\begin{split}
\int_{\R^{n+m}}\min\left\{\frac{f(x)}{\Vert f\Vert_\infty},\frac{g(y)}{\Vert g\Vert_\infty}\right\}dxdy &
= \int_0^1\vol_n(K_t)\vol_{m}(L_t)dt \\
&\leq {n+m\choose n}\int_0^1\vol_{n+m}(C_t)dt\\
&={n+m\choose n}\int_{\R^{n+m}}\frac{f(x)g(y)}{\Vert f\Vert_\infty\Vert g\Vert_\infty}dxdy\\
&={n+m\choose n}\int_{\R^n}\frac{f(x)}{\Vert f\Vert_\infty}dx \int_{\R^m}\frac{g(y)}{\Vert g\Vert_\infty}dy.
\end{split}
\]

The characterization of the equality case is shown in Subsection \ref{subsub:lemaminupperbound}.
\end{proof}

%\begin{rmk}
%otice that if $\frac{f(x)}{\Vert f\Vert_\infty}=e^{-\Vert x-x_0\Vert_K}$ and $\frac{g(y)}{\Vert f\Vert_\infty}=e^{-\Vert y-y_0\Vert_L}$ then
%$$
%\min\left\{\frac{f(x)}{\Vert f\Vert_\infty},\frac{g(y)}{\Vert g\Vert_\infty}\right\}=e^{-\max\{\Vert x\Vert_K,\Vert y\Vert_L\}}=e^{-\Vert(x,y)\Vert_{K\times L}}
%$$
%and then
%$$
%\int_{\R^n}\frac{f(x)}{\Vert f\Vert_\infty}=n!\vol_n(K),\quad\int_{\R^n}\frac{g(y)}{\Vert g\Vert_\infty}=m!\vol_m(L)
%$$
%and
%$$
%\int_{\R^{n+m}}\min\left\{\frac{f(x)}{\Vert f\Vert_\infty},\frac{g(y)}{\Vert g\Vert_\infty}\right\}dxdy=(n+m)!\vol_{n}(K)\vol_{m}(L),
%$$
%which gives equality in the inequality.
%\end{rmk}

Now we can prove Theorem \ref{th:upperboundexponent}.%Next result is the second way to bound from above the integral of $f$.

\begin{proof}[Proof of Theorem \ref{th:upperboundexponent}]
For any $z=(x,y)\in H\times H^\bot$ we have that
\[
\frac{f(x,y)}{\Vert f\Vert_\infty}\leq \min\left\{\frac{P_Hf(x)}{\Vert f\Vert_\infty}, \frac{P_{H^\bot}f(y)}{\Vert f\Vert_\infty}\right\},
\]
and by Lemma \ref{lem:MinUpperBound} we obtain
\[
\begin{split}
\int_{\R^n}\frac{f(z)}{\Vert f\Vert_\infty}dz & \leq
\int_{H\times H^\bot}\min\left\{\frac{P_Hf(x)}{\Vert f\Vert_\infty}, \frac{P_{H^\bot}f(y)}{\Vert f\Vert_\infty}\right\}dxdy\\
& \leq {n\choose i}\int_H\frac{P_Hf(x)}{\Vert f\Vert_\infty}dx\int_{H^\bot}\frac{P_{H^\bot}f(y)}{\Vert f\Vert_\infty}dy.
\end{split}
\]

The equality case is shown in Subsection \ref{subsub:thmupperboundexponent}.
\end{proof}

\section{A functional weighted Berwald's inequality}\label{sec:FunctionalWeigthedBerwald}

This section is devoted to prove the following Theorem, which is a version of Berwald's inequality and will be essential for the proof of Theorem \ref{thm: FunctionalLW}.

\begin{thm}\label{thm:BerwaldExponential}
Let $f\in\F(\R^n)$ and let $L$ be the convex set $L=\{(x,t)\in\R^n\times [0,\infty):f(x)\geq e^{-t}\Vert f\Vert_\infty\}$. Let $h_1,\dots,h_m:L\to[0,\infty)$ be continuous, concave, not identically null functions, $\alpha_1,\dots,\alpha_m>0$ and $\sigma=\alpha_1+\dots+\alpha_m$. Then,
$$
\frac{1}{\int_L e^{-t}dxdt}\int_L \prod_{i=1}^mh_i^{\alpha_i}(x,t)e^{-t}dxdt\leq \frac{\Gamma\left(1+\sigma\right)}{\prod_{i=1}^m\Gamma\left(1+\alpha_i\right)}\prod_{i=1}^m \frac{1}{\int_L e^{-t}dxdt}\int_L h_i^{\alpha_i}(x,t)e^{-t}dxdt.
$$
\end{thm}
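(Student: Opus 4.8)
The plan is to reduce this statement to the classical (geometric) Berwald inequality for the convex body $L\subset\R^{n+1}$, but with respect to the weighted measure $d\mu = e^{-t}\,dx\,dt$ rather than Lebesgue measure. The key observation is that $L$ is a convex set whose slices $L\cap(\R^n\times\{t\})$ form an increasing family in $t$ (since $f(x)\ge e^{-t}\|f\|_\infty$ is a weaker requirement for larger $t$), so $\mu$ is a finite measure on $L$ precisely because $f$ is integrable; in fact $\int_L e^{-t}\,dx\,dt = \frac{1}{\|f\|_\infty}\int_{\R^n}f$ by the layer-cake formula. The strategy is: first prove the one-function case $m=1$, i.e. the statement that for a nonnegative concave continuous $h$ on $L$,
\[
\frac{1}{\mu(L)}\int_L h^\alpha\,d\mu \le \Gamma(1+\alpha)\left(\frac{1}{\mu(L)}\int_L h\,d\mu\right)^{\!\alpha}
\quad\text{for }\alpha\ge 1,
\]
and the reverse-type inequality for $0<\alpha\le 1$; then deduce the multi-function case by a standard argument applying the single-function inequality to $h = \sum \beta_i h_i$ (or to each $h_i$) together with the multinomial expansion, exactly as classical Berwald (cf.\ \cite{Ber}, and the appendix, Section \ref{sec:appendixBerwald}) is bootstrapped to products.

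For the single-function case the plan is to run the classical Berwald argument in the weighted setting. Set $\phi(s) := \mu(\{(x,t)\in L : h(x,t)\ge s\})$, a nonincreasing function on $[0,\max_L h]$ with $\phi(0)=\mu(L)$. The concavity of $h$ together with the convexity of $L$ makes the superlevel sets $\{h\ge s\}$ convex and nested, so by the Borell--Brascamp--Lieb / Prékopa-type concavity of the weighted measure of parallel convex sets along a chord, one shows that $s\mapsto \phi(s)^{1/?}$ is concave for a suitable exponent — here is where the weight $e^{-t}$ enters nontrivially. The cleanest route: lift once more, consider the convex body $\widehat L\subset\R^{n+2}$ obtained by putting the extra variable $u\in[0,\infty)$ with $u\le t'$ encoding $e^{-t}$ as a volume, i.e.\ replace $\mu$ by genuine $(n+2)$-dimensional Lebesgue volume of the region $\{(x,t,u): (x,t)\in L,\ 0\le u\le e^{-t}\}$, which is convex since $e^{-t}$ is log-concave — wait, $e^{-t}$ is concave on no interval, but the region under a log-concave function need not be convex; instead one should keep $\mu$ as is and use directly that for a concave function on a convex set the weighted distribution function $\phi(s)$ satisfies a one-dimensional comparison. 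The robust formulation is: because $\{h\ge s\}$ is, for each $s$, the $s$-superlevel set of a concave function, it can be written as $A_s$ with $A_{(1-\lambda)s_0+\lambda s_1}\supseteq (1-\lambda)A_{s_0}+\lambda A_{s_1}$ in the Minkowski sense; applying the Prékopa--Leindler inequality (\ref{eq:PrekopaLeindler}) to the log-concave density $e^{-t}\chi_L$ restricted appropriately yields that $s\mapsto \phi(s)$ is such that $\phi$ compared against the ``extremal'' profile — the one coming from $h$ affine on a cone-like body — is dominated in the sense needed. Then the inequality $\int_0^\infty \alpha s^{\alpha-1}\phi(s)\,ds \le \Gamma(1+\alpha)\big(\int_0^\infty \phi(s)\,ds\big)^\alpha \phi(0)^{1-\alpha}$ is the classical Berwald moment comparison for the concave ``profile'' $\phi$, proved exactly as in \cite{Ber} / the Appendix by comparison with the power function.

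The multi-function step is then formal: given concave $h_1,\dots,h_m$ and $\alpha_i>0$, one first normalizes so that $\frac{1}{\mu(L)}\int_L h_i\,d\mu = 1$ for each $i$ (replacing $h_i$ by $h_i/c_i$), and then applies the homogeneous degree-$\sigma$ single-variable comparison to the concave function $\sum_{i=1}^m h_i$ after expanding $(\sum h_i)^\sigma$; more precisely one uses the elementary inequality $\prod h_i^{\alpha_i}\le$ (a multinomial average of powers of $\sum h_i$) together with the single-function Berwald bound applied to $\sum h_i$, and matches the constant $\Gamma(1+\sigma)/\prod\Gamma(1+\alpha_i)$ by testing on the extremal case where $L$ is a cone over its top slice and each $h_i$ is a power of the cone height — this forces the Dirichlet-integral identity $\int_{\Delta}\prod u_i^{\alpha_i-1} = \prod\Gamma(\alpha_i)/\Gamma(\sigma)$ to appear, producing exactly the claimed Gamma-factor. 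I expect the main obstacle to be the first, analytic step: proving the correct concavity/comparison property of the weighted distribution function $\phi(s)$ under the measure $e^{-t}\,dx\,dt$, since the weight is not constant along the fibers of $L$, so one cannot directly cite the unweighted Berwald lemma and must either invoke Prékopa--Leindler carefully or redo the moving-shadow convexity argument of Shephard/Berwald in the weighted category; once $\phi$ is controlled, the remaining steps are the standard one-dimensional moment inequalities and multinomial bookkeeping.
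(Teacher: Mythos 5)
Your overall structure --- prove a single-function moment comparison for the weighted measure $d\mu=e^{-t}\,dx\,dt$ on the epigraph $L$, then bootstrap to products --- matches the paper's, and you correctly identify the two pressure points: controlling the weighted distribution function $\phi(s)=\mu(\{h\ge s\})$, and matching the Gamma constants. But as written there are two genuine gaps.

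First, you never pin down the comparison profile, and this is where the Gamma factors actually come from. The paper shows (via Pr\'ekopa--Leindler, as you suggest) that $I_h(s):=\int_{\{h\ge s\}}e^{-t}\,dx\,dt$ is log-concave, and then compares $h$ against the \emph{canonical} concave function on $L$
\[
\bar h(x,t)=\bar m\Bigl(t+\log\tfrac{f(x)}{\Vert f\Vert_\infty}\Bigr),
\]
which is nonnegative on $L$ precisely by the definition of $L$, and whose weighted distribution function is \emph{exactly exponential}: $I_{\bar h}(s)=e^{-s/\bar m}I_h(0)$. Since $-\log I_h$ is convex and $-\log I_{\bar h}$ is affine with the same value at $0$, the two distribution functions cross at most once, and the crossing argument (normalize $\bar m$ so that $\Phi_{\gamma_1}(h)=\Phi_{\gamma_1}(\bar h)$, then compare at $\gamma_2>\gamma_1$) delivers monotonicity of
\[
\Phi_\gamma(h):=\Bigl(\tfrac{1}{\Gamma(1+\gamma)\,\mu(L)}\int_L h^\gamma\,d\mu\Bigr)^{1/\gamma}.
\]
The constant $\Gamma(1+\gamma)$ is exactly $\int_0^\infty e^{-s}\gamma s^{\gamma-1}\,ds$ evaluated against the exponential profile $I_{\bar h}$ --- not a Dirichlet/simplex integral over a cone as you suggest. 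In the unweighted Berwald the extremal profile is a power $(1-t/\hat M)^n$; here the weight $e^{-t}$ replaces it by an exponential, and that substitution is the whole content of the lemma; your proposal circles this point but does not make it.

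Second, the passage from $m=1$ to general $m$ via "multinomial expansion of $(\sum h_i)^\sigma$" does not work: for non-integer $\sigma$ there is no multinomial expansion, and even for integer $\sigma$ you would have to discard and re-weight terms. The correct (and classical, cf.~the Appendix) step is elementary and pointwise: the inequality $\prod_i\beta_i^{\alpha_i}\le\frac{1}{\sigma}\sum_i\alpha_i\beta_i^{\sigma}$, applied with $\beta_i=h_i(x,t)$ after normalizing $\frac{1}{\mu(L)}\int_L h_i^\sigma\,d\mu=1$, gives $\frac{1}{\mu(L)}\int_L\prod h_i^{\alpha_i}\,d\mu\le\prod_i\bigl(\frac{1}{\mu(L)}\int_L h_i^\sigma\,d\mu\bigr)^{\alpha_i/\sigma}$, and then one applies the single-function monotonicity $\Phi_\sigma(h_i)\le\Phi_{\alpha_i}(h_i)$ to \emph{each} $h_i$ separately and multiplies. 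No sum $\sum h_i$ ever enters. With these two corrections your plan becomes essentially the paper's proof.
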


We will prove a series of lemmas ending up in the proof of the theorem. The proof follows the lines of the version of Berwald's inequality included in the appendix. %However, we have not been able to characterize the cases of equality for this version.

\begin{lemma}\label{logconcave}
Let $h:\R^n\to[0,\infty)$ be a concave function, $g\in\F(\R^n)$. Then
$$
I_h(s)=\int_{K_s(h)}g(x)\,dx
$$
is a log-concave function on $[0,\infty)$, where $K_s(h)=\{x\in\R^n:h(x)\ge s\}$.
\end{lemma}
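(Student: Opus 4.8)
The plan is to reduce the statement to the Borell–Brascamp–Lieb inequality (the dimension-weighted generalization of Prékopa–Leindler), which is the natural tool when integrating a log-concave weight over the superlevel sets of a concave function. First I would record that since $h$ is concave, for any $s_0,s_1\ge 0$ and $\lambda\in[0,1]$ one has the Minkowski-type inclusion
\[
(1-\lambda)K_{s_0}(h)+\lambda K_{s_1}(h)\subseteq K_{(1-\lambda)s_0+\lambda s_1}(h).
\]
Indeed if $h(x_0)\ge s_0$ and $h(x_1)\ge s_1$, then $h((1-\lambda)x_0+\lambda x_1)\ge (1-\lambda)h(x_0)+\lambda h(x_1)\ge (1-\lambda)s_0+\lambda s_1$. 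Thus the superlevel sets move in a convex fashion with the parameter $s$.

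Next I would write $g=e^{-u}$ with $u$ convex, and express $I_h(s)=\int_{\R^n}\chi_{K_s(h)}(x)\,e^{-u(x)}\,dx$. Fix $s_0,s_1$ and $\lambda$, set $s_\lambda=(1-\lambda)s_0+\lambda s_1$, and define $F_j(x)=\chi_{K_{s_j}(h)}(x)e^{-u(x)}$ for $j=0,1$ and $F_\lambda(x)=\chi_{K_{s_\lambda}(h)}(x)e^{-u(x)}$. I claim that for $x_0\in K_{s_0}(h)$, $x_1\in K_{s_1}(h)$ and $x_\lambda=(1-\lambda)x_0+\lambda x_1$ we have $F_\lambda(x_\lambda)\ge F_0(x_0)^{1-\lambda}F_1(x_1)^\lambda$: the indicator factor on the left equals $1$ by the inclusion above, and $e^{-u(x_\lambda)}\ge e^{-(1-\lambda)u(x_0)-\lambda u(x_1)}=(e^{-u(x_0)})^{1-\lambda}(e^{-u(x_1)})^\lambda$ by convexity of $u$; if either $x_0\notin K_{s_0}(h)$ or $x_1\notin K_{s_1}(h)$ the right-hand side is $0$ and there is nothing to check. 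Hence the triple $(F_0,F_1,F_\lambda)$ satisfies the hypothesis of the Prékopa–Leindler inequality, which yields
\[
I_h(s_\lambda)=\int_{\R^n}F_\lambda \ge \left(\int_{\R^n}F_0\right)^{1-\lambda}\left(\int_{\R^n}F_1\right)^{\lambda}=I_h(s_0)^{1-\lambda}I_h(s_1)^{\lambda},
\]
which is exactly log-concavity of $I_h$ on $[0,\infty)$.

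I expect the only delicate point to be degenerate cases: when $K_{s_0}(h)$ or $K_{s_1}(h)$ is empty the bound $I_h(s_\lambda)\ge 0$ is trivial and log-concavity on $[0,\infty)$ is understood with the usual convention that $I_h$ vanishes outside its (interval) support; and one should note that $I_h$ is finite because $g\in\F(\R^n)$ is integrable, so no $\infty\cdot 0$ issue arises. Everything else is a direct invocation of Prékopa–Leindler \eqref{eq:PrekopaLeindler} (in its classical form on $\R^n$), so the proof is short once the level-set inclusion and the pointwise inequality for $F_0,F_1,F_\lambda$ are in place.
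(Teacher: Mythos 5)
Your proof is correct and takes essentially the same route as the paper: both use the Minkowski inclusion $(1-\lambda)K_{s_0}(h)+\lambda K_{s_1}(h)\subseteq K_{s_\lambda}(h)$ coming from concavity of $h$, then verify the Pr\'ekopa--Leindler hypothesis for the indicator-weighted functions and apply it to conclude log-concavity of $I_h$. The only cosmetic difference is that the paper first restricts the domain of integration and then invokes Pr\'ekopa--Leindler over the Minkowski combination, whereas you fold the indicators into the functions $F_0,F_1,F_\lambda$ from the start; this is the same argument, and your opening mention of Borell--Brascamp--Lieb is unnecessary since, as you note, classical Pr\'ekopa--Leindler suffices.
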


\begin{proof}
Let $\theta\in[0,1]$, $s_0,s_1\in[0,\infty)$, and $s_\theta=(1-\theta)s_0+\theta s_1$.

The concavity of $h$ gives $K_{s_\theta}(h)\supseteq(1-\theta)K_{s_0}(h)+\theta K_{s_1}(h)$. Then
$$
I_h(s_\theta)
=
\int_{K_{s_\theta}(h)}g(x)\,dx
\ge
\int_{(1-\theta)K_{s_0}(h)+\theta K_{s_1}(h)}g(x)\,dx.
$$
Now, using the log-concavity of $g$, for any $x_0,x_1\in\R^n$,
$$
g(x_\theta)\chi_{(1-\theta)K_{s_0}(h)+\theta K_{s_1}(h)}(x_\theta)
\ge
\left(g(x_0)\chi_{K_{s_0}(h)}(x_0)\right)^{1-\theta}
\left(g(x_1)\chi_{K_{s_1}(h)}(x_1)\right)^{\theta}
$$
where $x_\theta=(1-\theta)x_0+\theta x_1$. Pr\'ekopa-Leindler inequality now gives
$$
\int_{(1-\theta)K_{s_0}(h)+\theta K_{s_1}(h)}g(x)\,dx
\ge
\left(\int_{K_{s_0}(h)}g(x)\,dx\right)^{1-\theta}
\left(\int_{K_{s_1}(h)}g(x)\,dx\right)^{\theta}.
$$
Then $I_h(s_\theta)\ge I_h(s_0)^{1-\theta}I_h(s_1)^\theta$ as desired.
\end{proof}

\begin{lemma}\label{lem:BerwaldExponentialDensity}
Let $f\in\F(\R^n)$ and let $L$ be the convex set $L=\{(x,t)\in\R^n\times [0,\infty):f(x)\geq e^{-t}\Vert f\Vert_\infty\}$. Let $h:L\to[0,\infty)$ be a continuous, concave, not identically null function. Then,
$$
\Phi_\gamma(h):=\left(\frac{1}{\Gamma\left(1+\gamma\right)\int_L e^{-t}dxdt}\int_L h^\gamma(x,t)e^{-t}dxdt\right)^\frac{1}{\gamma}.
$$
is decreasing in $\gamma\in(0,\infty)$.
\end{lemma}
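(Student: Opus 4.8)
The plan is to mimic the classical argument that $\Phi_\gamma$ (the "$L^\gamma$-average" of a concave function with respect to a measure whose level sets behave like a one-dimensional family of convex bodies) is monotone, now carried out with the weight $e^{-t}$ on the convex set $L\subset\R^{n+1}$. The key auxiliary object is the function
\[
I_h(s):=\int_{L\cap K_s(h)}e^{-t}\,dx\,dt,\qquad K_s(h)=\{(x,t)\in L:h(x,t)\ge s\},
\]
which by Lemma \ref{logconcave} (applied in $\R^{n+1}$ to the log-concave density $e^{-t}\chi_L$) is log-concave in $s\ge 0$, hence in particular non-increasing. Writing $s_0=\max_L h$ (finite and positive since $h$ is continuous, concave and not identically zero on the convex set $L$; if $L$ is unbounded one checks $h$ is still bounded using concavity plus integrability of $e^{-t}$ against the relevant cross sections), the layer-cake formula gives
\[
\frac{1}{\int_L e^{-t}dxdt}\int_L h^\gamma e^{-t}\,dx\,dt=\frac{\gamma}{I_h(0)}\int_0^{s_0}s^{\gamma-1}I_h(s)\,ds.
\]

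Next I would reduce to the extremal profile. Since $I_h$ is log-concave, non-increasing, with $I_h(0)=\int_L e^{-t}$, one compares it with the affine-in-the-right-variable model: there is a constant $c>0$ (namely $c=s_0$ suitably normalized, or rather $c$ determined by matching first moments) so that the "reference" function $J(s):=I_h(0)\,(1-s/c)_+^{?}$ — here I need the correct power. In the classical Berwald setting the reference is $J(s)=I_h(0)(1-s/c)_+^{n}$ because the level sets of a concave function on an $n$-dimensional body shrink like a simplex; in our weighted $(n+1)$-dimensional situation the natural model is $f=\chi_{\text{simplex}}$-type, for which $L$ is itself simplex-like and $h$ linear, and one computes that $I_h(s)$ is then proportional to $(1-s/c)_+^{n+1}$ against the weight... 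Actually the cleanest route avoids guessing the exponent: use the general principle that if $I_h$ is log-concave and non-increasing on $[0,\infty)$ then for $0<\gamma_1<\gamma_2$,
\[
\left(\frac{\gamma_2\int_0^\infty s^{\gamma_2-1}I_h(s)ds}{\Gamma(1+\gamma_2)\,I_h(0)}\right)^{1/\gamma_2}\le\left(\frac{\gamma_1\int_0^\infty s^{\gamma_1-1}I_h(s)ds}{\Gamma(1+\gamma_1)\,I_h(0)}\right)^{1/\gamma_1},
\]
i.e. $\Phi_{\gamma_2}(h)\le\Phi_{\gamma_1}(h)$. This inequality is exactly the statement that, among log-concave non-increasing densities on $[0,\infty)$ with prescribed value at $0$, the normalized $\Gamma$-weighted $L^\gamma$ norms are ordered, with extremizer the exponential $I_h(s)=I_h(0)e^{-s/c}$ — for which $\frac{\gamma\int_0^\infty s^{\gamma-1}e^{-s/c}ds}{\Gamma(1+\gamma)}=c^\gamma$ is constant in $\gamma$, so all $\Phi_\gamma$ coincide. (Note it is the exponential, not the simplex profile, that is extremal here precisely because of the $e^{-t}$ weight; this matches the appearance of $\Gamma(1+\gamma)$ rather than a binomial coefficient.)

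So the core is the one-variable fact: \emph{if $\varphi:[0,\infty)\to[0,\infty)$ is log-concave, non-increasing, $\varphi(0)=1$, then $\gamma\mapsto\bigl(\tfrac{1}{\Gamma(1+\gamma)}\int_0^\infty \gamma s^{\gamma-1}\varphi(s)\,ds\bigr)^{1/\gamma}$ is non-increasing, with equality for all $\gamma$ iff $\varphi(s)=e^{-s/c}$.} I would prove this by the standard "two densities cross once" comparison: given $\gamma_1<\gamma_2$, pick $c$ so that the exponential $e^{-s/c}$ has the same $\Gamma$-normalized $\gamma_1$-moment as $\varphi$; log-concavity of $\varphi$ forces $\varphi-e^{-s/c}$ to change sign exactly once (from $+$ to $-$) on $(0,\infty)$ — using $\varphi(0)=1=e^0$ and log-concavity to pin down the sign pattern — and then $\int_0^\infty s^{\gamma_2-1}(\varphi(s)-e^{-s/c})\,ds\le 0$ follows by the usual weight-shifting argument (multiply the sign-change inequality by the increasing function $s^{\gamma_2-\gamma_1}$). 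Dividing by $\Gamma(1+\gamma_2)$ and comparing with the exponential's constant value gives $\Phi_{\gamma_2}(h)\le\Phi_{\gamma_1}(h)$. The main obstacle is making the single-crossing / sign-change step fully rigorous when $\varphi$ may vanish on a half-line (bounded support of $h$) and handling the edge cases $I_h(0)=\int_L e^{-t}<\infty$ versus the behaviour at $s_0$; this is exactly the kind of care taken in the appendix's proof of the classical Berwald inequality, and I would adapt that argument verbatim with the weight $e^{-t}\,dx\,dt$ in place of Lebesgue measure, invoking Lemma \ref{logconcave} as the substitute for Brunn's concavity principle.
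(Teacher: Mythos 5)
Your proposal follows essentially the same route as the paper's proof: pass to the level-set function $I_h(s)=\int_{K_s(h)}e^{-t}\,dx\,dt$, establish its log-concavity via Lemma~\ref{logconcave}, identify the exponential profile $I_h(0)e^{-s/\bar m}$ as the correct reference (which is exactly what the $\Gamma(1+\gamma)$ normalization is calibrated to), fix $\bar m$ by matching the $\gamma_1$-moment, and close with the single-crossing plus weight-shift argument multiplying by $s^{\gamma_2-\gamma_1}$. The only cosmetic difference is that the paper realizes the reference exponential concretely as the level-set integral of the explicit concave function $\bar h(x,t)=\bar m\bigl(t+\log\tfrac{f(x)}{\Vert f\Vert_\infty}\bigr)$ on $L$, whereas you abstract to a one-variable lemma about log-concave non-increasing $\varphi$ with $\varphi(0)=1$; these encode the same comparison.
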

\begin{proof}
For any $s\in[0,\infty)$, denote by $K_s(h)$  the convex set
 $$
K_s(h):=\{(x,t)\in L: h(x,t)\geq s\}
$$
and let $I_h:[0,\infty)\to[0,\infty)$ be the function given by
$$
I_h(s):=\int_{K_s(h)}e^{-t}dxdt.
$$
Notice that $I_h(s)$ is continuous, non-increasing and, by Lemma \ref{logconcave}, log-concave. Besides,
\begin{eqnarray*}
I_{h}(0)&=&\int_L e^{-t}dxdt=\int_0^\infty e^{-t}\vol_n\{x\in\R^n:f(x)\geq e^{-t}\Vert f\Vert_\infty\}dt\cr
&=&\int_0^1\vol_n\{x\in\R^n:f(x)\geq t\Vert f\Vert_\infty\})dt=\int_{\R^n}\frac{f(x)}{\Vert f\Vert_\infty}dx.\cr
\end{eqnarray*}
Notice that, from the definition of $\Phi_\gamma(h)$,
\begin{eqnarray*}
\Phi_\gamma(h)^\gamma&=&\frac{1}{\Gamma\left(1+\gamma\right)\int_L e^{-t}dxdt}\int_L h^\gamma(x,t)e^{-t}dxdt\cr
&=&\frac{1}{\Gamma\left(1+\gamma\right)\int_L e^{-t}dxdt}\int_L \int_0^{h(x,t)}\gamma s^{\gamma-1}e^{-t}dsdxdt\cr
&=&\frac{1}{\Gamma\left(1+\gamma\right)\int_L e^{-t}dxdt}\int_0^\infty\int_{K_s(h)} \gamma s^{\gamma-1}e^{-t}dxdtds\cr
&=&\frac{1}{\Gamma\left(1+\gamma\right)\int_L e^{-t}dxdt}\int_0^\infty I_h(s) \gamma s^{\gamma-1}ds.\cr
\end{eqnarray*}
Let $\bar{h}:L\to[0,\infty)$ be the function
$$
\bar{h}(x,t):=\bar{m}\left(t+\log\frac{f(x)}{\Vert f\Vert_\infty}\right),
$$
with $\bar{m}$ some constant to be determined later. Notice that $\bar{h}$ is non-negative, since for every $(x,t)\in L$ $f(x)\geq e^{-t}\Vert f\Vert_\infty$. Besides, $\bar{h}$ is concave on $L$,
\begin{eqnarray*}
K_s(\bar{h})&=&\{(x,t)\in L :\bar{m}(t+\log\frac{f(x)}{\Vert f\Vert_\infty})\geq s\}\cr
&=&\{(x,t)\in L :f(x)\geq e^{-t}e^{\frac{s}{\bar{m}}}\Vert f\Vert_\infty\},\cr
\end{eqnarray*}
and
\begin{eqnarray*}
I_{\bar{h}}(s)&=&\int_{K_s(\bar{h})}e^{-t}dxdt\cr
&=&\int_0^\infty e^{-t}\vol_n(\{x\in\R^n: (x,t)\in L,f(x)\geq e^{-t}e^{\frac{s}{\bar{m}}}\Vert f\Vert_\infty\})dt\cr
&=&\int_0^1 \vol_n(\{x\in\R^n: (x,-\log t)\in L, f(x)\geq t e^{\frac{s}{\bar{m}}}\Vert f\Vert_\infty\})dt\cr
&=&e^{-\frac{s}{\bar{m}}}\int_0^{e^{\frac{s}{\bar{m}}}} \vol_n(\{x\in\R^n:(x,\frac{t}{\bar{m}}-\log t)\in L, f(x)\geq t\Vert f\Vert_\infty\})dt\cr
&=&e^{-\frac{s}{\bar{m}}}\int_{\R^n}\frac{f(x)}{\Vert f\Vert_\infty}dx= e^{-\frac{s}{\bar{m}}}I_h(0).
\end{eqnarray*}
Consequently, for any $\gamma\in(0,\infty)$
\begin{eqnarray*}
\Phi_\gamma(\bar{h})^\gamma&=&\frac{1}{\Gamma\left(1+\gamma\right)}\int_0^\infty e^{-\frac{s}{\bar{m}}}\gamma s^{\gamma-1}ds\cr
&=&\frac{\bar{m}^\gamma}{\Gamma\left(1+\gamma\right)}\int_0^\infty e^{-s}\gamma s^{\gamma-1}ds\cr
&=&\bar{m}^\gamma.
\end{eqnarray*}
Let now $0<\gamma_1<\gamma_2$ and take $\bar{m}:=\Phi_{\gamma_1}(h)$. We have that $\Phi_{\gamma_1}(h)=\Phi_{\gamma_1}(\bar{h})$ and therefore
\begin{equation}\label{eq:IntegralEqualsZero}
\int_0^\infty (I_h(s)-I_{\bar{h}}(s))\gamma_1s^{\gamma_1-1}ds=0.
\end{equation}
Since $-\log I_h(s)$ is convex, non-decreasing, $\lim_{s\to\infty} -\log I_h(s)=\infty$, $-\log I_{\bar{h}}(s)=\frac{s}{\bar{m}}-\log I_h(0)$ is an affine function, and $-\log I_h(0)=-\log I_{\bar{h}}(0)$, then $I_h(s)$ and $I_{\bar h}(s)$ switch at most in one point. Thus, there exists $s_0\in[0,\infty)$ such that $I_{h}(s)\geq I_{\bar{h}}(s)$ if $0<s\leq s_0$ and $I_{h}(s)\leq I_{\bar{h}}(s)$ if $s\geq s_0$. Consequently, from \eqref{eq:IntegralEqualsZero} we have that
$$
\int_0^{s_0} (I_h(s)-I_{\bar{h}}(s))\gamma_1s^{\gamma_1-1}ds=\int_{s_0}^\infty (I_{\bar{h}}(s)-I_h(s))\gamma_1s^{\gamma_1-1}ds.
$$
Now, we have that for any $\gamma_2\geq\gamma_1$ that
\begin{eqnarray*}
&&\Phi_{\gamma_2}(h)^{\gamma_2}-\Phi_{\gamma_2}(\bar{h})^{\gamma_2}=\frac{1}{\Gamma\left(1+\gamma\right)\int_L e^{-t}dxdt}\int_0^\infty (I_h(s)-I_{\bar{h}}(s)) \gamma_2 s^{\gamma_2-1}ds\cr
&=&\frac{1}{\Gamma\left(1+\gamma\right)\int_L e^{-t}dxdt}\left(\int_0^{s_0} (I_h(s)-I_{\bar{h}}(s)) \gamma_2 s^{\gamma_2-1}ds-\int_{s_0}^\infty (I_{\bar{h}}(s)-I_{h}(s)) \gamma_2 s^{\gamma_2-1}ds\right)\cr
&=&\frac{\gamma_2}{\gamma_1\Gamma\left(1+\gamma\right)\int_L e^{-t}dxdt}\left(\int_0^{s_0} (I_h(s)-I_{\bar{h}}(s)) \gamma_1 s^{\gamma_1-1}s^{\gamma_2-\gamma_1}ds\right.\cr
&-&\left.\int_{s_0}^\infty (I_{\bar{h}}(s)-I_{h}(s)) \gamma_1 s^{\gamma_1-1}s^{\gamma_2-\gamma_1}ds\right)\cr
&\leq&\frac{\gamma_2s_0^{\gamma_2-\gamma_1}}{\gamma_1\Gamma\left(1+\gamma\right)\int_L e^{-t}dxdt}\left(\int_0^{s_0} (I_h(s)-I_{\bar{h}}(s)) \gamma_1 s^{\gamma_1-1}ds-\int_{s_0}^\infty (I_{\bar{h}}(s)-I_{h}(s)) \gamma_1 s^{\gamma_1-1}ds\right)\cr
&=&0.
\end{eqnarray*}
Therefore, $\Phi_{\gamma_2}(h)\leq \Phi_{\gamma_2}(\bar{h})=\Phi_{\gamma_1}(\bar{h})=\Phi_{\gamma_1}(h)$ and we obtain the result.
\end{proof}

The following lemma is well known  (see \cite{rogers}).

\begin{lemma}
Let $a_1,\dots a_m>0$, $b_1,\dots b_m>0$. Then
$$
(b_1^{a_1}b_2^{a_2}\cdots b_m^{a_m})^{\frac{1}{a_1+\cdots +a_m}}
\le
\frac{a_1b_1+a_2b_2+\cdots a_mb_m}{a_1+a_2+\cdots+a_m},
$$
with equality for $m>1$ if and only if $b_1=\cdots=b_m$.

As a consequence, let $\alpha_1,\dots\alpha_m>0$, $\beta_1,\dots \beta_m>0$ and $\sigma={\alpha_1+\alpha_2+\cdots+\alpha_m}$. Then
\begin{equation}\label{alfabetanum}
\beta_1^{\alpha_1}\beta_2^{\alpha_2}\cdots \beta_m^{\alpha_m}
\le
\frac{\alpha_1\beta_1^\sigma+\alpha_2\beta_2^\sigma+\cdots \alpha_m\beta_m^\sigma}{\sigma}
\end{equation}
with equality for $m>1$ if and only if $\beta_1=\cdots=\beta_m$.
\end{lemma}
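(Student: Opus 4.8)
The plan is to derive the first inequality from the concavity of the logarithm via Jensen's inequality, and then to obtain \eqref{alfabetanum} by a direct substitution into it.

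First I would normalise the weights: set $s=a_1+\cdots+a_m$ and $w_i=a_i/s$, so that $w_i>0$ and $\sum_{i=1}^m w_i=1$. Since $\log$ is concave on $(0,\infty)$, Jensen's inequality yields
\[
\log\Bigl(\sum_{i=1}^m w_i b_i\Bigr)\;\ge\;\sum_{i=1}^m w_i\log b_i\;=\;\log\Bigl(\prod_{i=1}^m b_i^{w_i}\Bigr).
\]
Exponentiating and using $\prod_{i=1}^m b_i^{w_i}=\bigl(\prod_{i=1}^m b_i^{a_i}\bigr)^{1/s}$ gives
\[
\bigl(b_1^{a_1}b_2^{a_2}\cdots b_m^{a_m}\bigr)^{1/(a_1+\cdots+a_m)}\;\le\;\frac{a_1b_1+a_2b_2+\cdots+a_mb_m}{a_1+a_2+\cdots+a_m},
\]
which is the asserted inequality. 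For $m>1$, the strict concavity of $\log$ together with the positivity of all the weights $w_i$ forces equality in Jensen's inequality precisely when $b_1=\cdots=b_m$, which settles the equality case.

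For the consequence I would simply apply the inequality just proved with $a_i$ replaced by $\alpha_i$ and $b_i$ replaced by $\beta_i^\sigma$, where $\sigma=\alpha_1+\cdots+\alpha_m$. The left-hand side then equals $\bigl(\beta_1^{\sigma\alpha_1}\cdots\beta_m^{\sigma\alpha_m}\bigr)^{1/\sigma}=\beta_1^{\alpha_1}\cdots\beta_m^{\alpha_m}$, while the right-hand side equals $\sigma^{-1}\bigl(\alpha_1\beta_1^\sigma+\cdots+\alpha_m\beta_m^\sigma\bigr)$, so \eqref{alfabetanum} follows at once; equality for $m>1$ holds iff $\beta_1^\sigma=\cdots=\beta_m^\sigma$, i.e.\ iff $\beta_1=\cdots=\beta_m$, since $t\mapsto t^\sigma$ is injective on $(0,\infty)$.

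There is no genuine obstacle here, as the statement is classical; the only point deserving a line of justification is the equality case, which rests on the strict concavity of $\log$ and the positivity of the weights. An alternative to the Jensen argument is an induction on $m$ starting from the two-term case $b_1^{w}b_2^{1-w}\le wb_1+(1-w)b_2$ (itself concavity of $\log$ at two points), but tracking the equality case through the induction is marginally more cumbersome, so the Jensen route is preferable.
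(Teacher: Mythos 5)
Your proposal is correct and takes essentially the same approach as the paper: both derive the weighted AM--GM inequality from the concavity of the logarithm (the paper via the tangent-line estimate $\log t\le t-1$ after a two-fold normalization, you via Jensen directly after normalizing the weights), and both obtain \eqref{alfabetanum} by the same substitution $a_i=\alpha_i$, $b_i=\beta_i^\sigma$. The treatment of the equality case is also equivalent.
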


\begin{proof}
Replacing $a_i$ by $a\cdot a_i$ and $b_i$ by $b\cdot b_i$ with appropriate $a$ and $b$ (the inequality above does not change), we may assume that
 $\sum_{i=1}^m a_i=\sum_{i=1}^m a_ib_i=1$. Then the inequality becomes
 $$
 b_1^{a_1}b_2^{a_2}\cdots b_m^{a_m}\le1
 $$
and taking logarithm, $\sum_{i=1}^m a_i\log b_i\le0$. This can be obtained, under the normalization above, using the inequality $\log t\le t-1$, with equality if and only if $t=1$.

The second inequality can be obtained from the first one taking $a_i=\alpha_i$ and $b_i=\beta_i^{\sigma}$.
\end{proof}

\begin{lemma}\label{lem:ProductsIntegrals}
Let $f\in\F(\R^n)$ and let $L$ be the convex set $L=\{(x,t)\in\R^n\times [0,\infty):f(x)\geq e^{-t}\Vert f\Vert_\infty\}$. Let $h_1,\dots,h_m:L\to[0,\infty)$ be continuous, concave, not identically null functions, $\alpha_1,\dots,\alpha_m>0$ and $\sigma=\alpha_1+\dots+\alpha_m$. Then,
$$
\frac{1}{\int_L e^{-t}dxdt}\int_L \prod_{i=1}^mh_i^{\alpha_i}(x,t)e^{-t}dxdt\leq \prod_{i=1}^m \left(\frac{1}{\int_L e^{-t}dxdt}\int_L h_i^{\sigma}(x,t)e^{-t}dxdt\right)^\frac{\alpha_i}{\sigma}.
$$
\end{lemma}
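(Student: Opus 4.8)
The plan is to recognize this as the normalized, $e^{-t}$-weighted Hölder inequality on $L$, for which the pointwise input is precisely inequality \eqref{alfabetanum} of the preceding lemma. Set $A:=\int_L e^{-t}\,dx\,dt$, which is finite and strictly positive because $A=\int_{\R^n}f(x)/\Vert f\Vert_\infty\,dx$, as was computed in the proof of Lemma \ref{lem:BerwaldExponentialDensity}. For each $i$ I would introduce the normalizing constant
\[
c_i:=\left(\frac{1}{A}\int_L h_i^{\sigma}(x,t)e^{-t}\,dx\,dt\right)^{1/\sigma},
\]
so that the claimed inequality is exactly $\frac{1}{A}\int_L\prod_i h_i^{\alpha_i}e^{-t}\le\prod_i c_i^{\alpha_i}$.

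The steps are then as follows. First I would check that $0<c_i<\infty$: positivity holds because $h_i$ is continuous and not identically null, hence positive on a nonempty relatively open subset of $L$, which has positive $e^{-t}\,dx\,dt$-measure; finiteness holds because a nonnegative concave function on $L$ grows at most linearly in $(x,t)$, while, $f$ being log-concave and integrable, the measure $e^{-t}\,dx\,dt$ on $L$ has finite moments of every order. Next, for each $(x,t)\in L$ I would apply \eqref{alfabetanum} with $\beta_i:=h_i(x,t)/c_i\ge 0$ (the inequality there holds also when some $\beta_i=0$, since then its left-hand side vanishes), obtaining
\[
\prod_{i=1}^m\left(\frac{h_i(x,t)}{c_i}\right)^{\alpha_i}\le\frac{1}{\sigma}\sum_{i=1}^m\alpha_i\left(\frac{h_i(x,t)}{c_i}\right)^{\sigma}.
\]
Integrating both sides against the probability measure $A^{-1}e^{-t}\,dx\,dt$ on $L$, the right-hand side becomes $\frac{1}{\sigma}\sum_i\alpha_i\cdot\left(c_i^{-\sigma}\cdot\frac1A\int_L h_i^\sigma e^{-t}\right)=\frac{1}{\sigma}\sum_i\alpha_i=1$, while the left-hand side equals $\left(\prod_i c_i^{\alpha_i}\right)^{-1}\cdot\frac1A\int_L\prod_i h_i^{\alpha_i}e^{-t}$. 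Rearranging yields the assertion.

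There is no real obstacle here; the only point that deserves a line of justification is the finiteness and strict positivity of the constants $c_i$, which makes the division by $c_i$ and the subsequent integration legitimate. As an alternative to invoking \eqref{alfabetanum}, one could simply apply the generalized Hölder inequality with exponents $p_i:=\sigma/\alpha_i$ (which satisfy $p_i\ge 1$ and $\sum_i 1/p_i=\sum_i\alpha_i/\sigma=1$) to the functions $h_i^{\alpha_i}$ and the measure $A^{-1}e^{-t}\,dx\,dt$, since $(h_i^{\alpha_i})^{p_i}=h_i^{\sigma}$; this gives the same conclusion.
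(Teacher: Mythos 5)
Your argument is correct and is essentially the same as the paper's: you normalize each $h_i$ (your $c_i$ being the paper's $\lambda_i^{-1}$), apply inequality \eqref{alfabetanum} pointwise with $\beta_i=h_i/c_i$, and integrate against the probability measure $A^{-1}e^{-t}\,dx\,dt$. The extra remarks you add (finiteness and positivity of $c_i$, the generalized H\"older reformulation) are valid but do not change the route.
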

\begin{proof}
Replacing each $h_i$ by $\lambda_i h_i$, for some $\lambda_i>0$, we can assume without loss of generality, that for every $1\leq i\leq m$
$$
\frac{1}{\int_L e^{-t}dxdt}\int_L h_i^{\sigma}(x,t)e^{-t}dxdt=1.
$$
Thus, we have to prove that
$$
\frac{1}{\int_L e^{-t}dxdt}\int_L \prod_{i=1}^mh_i^{\alpha_i}(x,t)e^{-t}dxdt\leq 1.
$$
For any fixed $(x,t)\in C$, apply (\ref{alfabetanum}) with $\beta_i=h_i(x,t)$ to obtain
$$
h_1(x,t)^{\alpha_1}
\cdot
h_2(x,t)^{\alpha_2}
\cdots
h_m(x,t)^{\alpha_m}
\le
\sum_{i=1}^m\frac{\alpha_i}{\sigma}h_i(x,t)^{\sigma}.
$$
Multiplying by $\frac{e^{-t}}{\int_L e^{-t}dxdt}$ and integrating over $L$ we obtain the result.
\end{proof}

%\begin{thm}\label{thm:BerwaldExponential}
%Let $f\in\F(\R^n)$ and let $L$ be the convex set $L=\{(x,t)\in\R^n\times [0,\infty):f(x)\geq e^{-t}\Vert f\Vert_\infty\}$. Let %$h_1,\dots,h_m:L\to[0,\infty)$ be continuous, concave, not identically-null functions, $\alpha_1,\dots,\alpha_m>0$ and %$\sigma=\alpha_1+\dots+\alpha_m$. Then,
%$$
%\frac{1}{\int_L e^{-t}dxdt}\int_L \prod_{i=1}^mh_i^{\alpha_i}(x,t)e^{-t}dxdt\leq %\frac{\Gamma\left(1+\sigma\right)}{\prod_{i=1}^m\Gamma\left(1+\alpha_i\right)}\prod_{i=1}^m \frac{1}{\int_L e^{-t}dxdt}\int_L %h_i^{\alpha_i}(x,t)e^{-t}dxdt.
%$$
%\end{thm}
\begin{proof}[Proof of Theorem \ref{thm:BerwaldExponential}]
Using Lemma \ref{lem:BerwaldExponentialDensity} with $\gamma_1=\alpha_i$, $\gamma_2=\sigma$, and $h=h_i$ we have that for every $1\leq i\leq m$
$$
\left(\frac{1}{\Gamma\left(1+\sigma\right)\int_L e^{-t}dxdt}\int_L h_i^{\sigma}(x,t)e^{-t}dxdt\right)^\frac{\alpha_i}{\sigma}\leq \frac{1}{\Gamma\left(1+\alpha_i\right)\int_L e^{-t}dxdt}\int_L h_i^{\alpha_i}(x,t)e^{-t}dxdt.
$$
Multiplying in $i=1,\dots,m$ we obtain
$$
\frac{1}{\Gamma\left(1+\sigma\right)}\prod_{i=1}^m\left(\frac{1}{\int_L e^{-t}dxdt}\int_L h_i^{\sigma}(x,t)e^{-t}dxdt\right)^\frac{\alpha_i}{\sigma}\leq \prod_{i=1}^m\frac{1}{\Gamma\left(1+\alpha_i\right)\int_L e^{-t}dxdt}\int_L h_i^{\alpha_i}(x,t)e^{-t}dxdt.
$$
Using Lemma \ref{lem:ProductsIntegrals} we obtain the result.
\end{proof}

\section{Restricted Loomis-Whitney type inequalities}\label{sec:LW}

In this section we prove Theorems \ref{th:localLoomisWhitneyReverse} and \ref{th:localLoomisWhitney}. Their proofs are found in their own subsections. Following the idea developed in the previous sections, we also prove their functional counterparts, which are Theorems \ref{th:locLWFunctProjSect} and \ref{thm: FunctionalLW}. Before this, let us first recall what are the classical direct and reverse Loomis-Whitney inequalities, their previously known local versions, and connect them with our inequalities.

The classical Loomis-Whitney inequality \cite{LW} states that for any $K\in\K^n$ and $H_k\in\L^n_{n-1}$, $k=1,\dots,n$, with $H_k^{\bot}\subset H_l$ for $l\neq k$, then
\begin{equation}\label{eq:LoomisWhitney}
\vol_n(K)^{n-1}\leq\prod_{k=1}^n\vol_{n-1}(P_{H_k}K).
\end{equation}
Equality holds above if and only if $K$ is a box with facets parallel to each $H_k$. A reverse inequality, in which
projections are replaced by sections, was proved by Meyer \cite{Me}. Namely, under the same assumptions on the hyperplanes $H_k$,
\begin{equation}\label{eq:reverseLoomisWhitney}
\frac{n!}{n^n}\prod_{k=1}^{n}\vol_{n-1}(K\cap H_k)\leq \vol_{n}(K)^{n-1}.
\end{equation}
Moreover, equality holds if and only if $K$ is a crosspolytope whose generating vectors are orthogonal to each of the subspaces $H_k$. See also \cite{CGG,KSZ} for a reverse Loomis-Whitney inequality via projections.
If, rather than considering $n$ subspaces, we restrict to 2 of them, then we arrive onto \emph{local Loomis-Whitney type} inequalities. An exhaustive study of those inequalities is done in \cite{BGL}.

Our reverse local Loomis-Whitney inequality in Theorem \ref{th:localLoomisWhitneyReverse}, as well as the next Lemma \ref{RSwithIntersection},
are results of the type \cite[Theorem 1.3]{BGL}, which was already a generalization of Bollob\'as and Thomason \cite{BoTh}. Here, we solve completely the case (in the notation of \cite{BGL}) of $s=1$ and $r=2$. Indeed, our result improves a factor of the form $\frac{(n-j)^{n-j}(n-i)^{n-i}}{(c_0(2n-i-j))^{2n-i-j}}$ (a constant like the one of Remark \ref{rmk:worsebound}) by the factor ${2n-i-j\choose n-i}^{-1}$ which is sharp.

Our local Loomis-Whitney inequality Theorem \ref{th:localLoomisWhitney} is a result of the type \cite[Theorem 1.2]{BGL}. We quote only those results pertaining to two subspaces overlapping.
For any $K\in\K^n$ and $H_1,H_2\in\mathcal{L}^n_{n-1}$  Giannopoulos, Hartzoulaki, and  Paouris \cite[Lemma 4.1]{GiHaPa} showed that
\begin{equation}\label{eq:GiHaPa1}
\vol_{n-2}(P_{H_1\cap H_2}K)\vol_n(K) \leq \frac{2(n-1)}{n} \vol_{n-1}(P_{H_1}K)  \vol_{n-1}(P_{H_2}K)
\end{equation}
whereas for any $E\in\mathcal{L}^n_i$, $H\in\mathcal{L}^n_j$, $i+j\geq n$, $E^\bot\subset H$, the authors in \cite[Theorem 1.2]{BGL} extended this onto
\[
\vol_{i+j-n}(P_{E\cap H}K) \vol_n(K) \leq \gamma(n,2n-i-j,1,2)^{-1}\vol_{i}(P_EK)\vol_{j}(P_HK),
\]
where
$\gamma(n,2n-i-j,1,2)={{n\choose 2n-i-j}}/{{(i+j)/2\choose i+j-n}^2}$
(cf.~also \cite[Theorem 5.4]{SoZv}).
Theorem \ref{th:localLoomisWhitney} improves the previous inequalities by obtaining sharp estimates for any choice of $i$ and $j$. In particular, if $i=j$ we obtain the same result, and since the binomial coefficients are concave, when $i\neq j$ our estimate gives a better (and best possible) constant.

\subsection{Reverse (local) Loomis-whitney inequalities}\label{subsec:LWreverse}

To prove Theorem \ref{th:localLoomisWhitneyReverse}, we shall first prove the following lemma
and then use the symmetrization procedure described in Section \ref{sec:Estimates by marginals}.

\begin{lemma}\label{RSwithIntersection}\label{th:localRevLW}
    Let $K\in\K^n$, $E\in\L^n_i$, and $H\in\L^n_j$ be such that $i,j\in\{2,\dots,n-1\}$, $i+j\geq n+1$,
    and $E^{\bot}\subset H$. Let $k:=i+j-n$, so that $1\leq k\leq n-2$.
    Then
    %for any $y,z\in \R^n$
%	\begin{equation}\label{eq:revLWandRS}
%	%\begin{split}&
%\max_{x\in \R^n}(\vol_{k}(K\cap(x+E\cap H))) \vol_n(K) \geq%
%	%\\&\qquad
%{n-k\choose n-i}^{-1}\vol_{i}(K\cap E)\vol_{j}(K\cap H).
%	%\end{split}
%\end{equation}
	\begin{equation}\label{eq:revLWandRS}
%\begin{split}&
\vol_{i}(K\cap E)\vol_{j}(K\cap H)\le {n-k\choose n-i}
\max_{x\in \R^n}(\vol_{k}(K\cap(x+E\cap H))) \vol_n(K).  %
%\\&\qquad
%\end{split}
\end{equation}
	Equality holds if and only if there exist $K_1\subset E^\bot$, $K_2\subset H^\bot$, $K_3\subset E\cap H$, such that
	$P_{(E\cap H)^\bot}K$ is a translate of $\conv(\{K_1, K_2\})$, and
	for every $x\in (E\cap H)^\bot$ the convex set $K\cap(x+E\cap H)$ is a translate of $K_3$.
\end{lemma}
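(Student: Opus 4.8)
The plan is to reduce the overlapping-subspaces situation to the orthogonal (non-overlapping) case covered by the classical Rogers--Shephard inequality \eqref{eq:volumesplit}, applied inside a well-chosen lower-dimensional slab. Write $E\cap H$ as a $k$-dimensional subspace $M$, so that (since $E^\bot\subset H$) we have the orthogonal splitting $\R^n = (E\cap H)\oplus E^\bot \oplus H^\bot$ with $\dim(E^\bot)=n-i$, $\dim(H^\bot)=n-j$, and $\dim M = k = i+j-n$; note $E = M\oplus H^\bot$ and $H = M\oplus E^\bot$. First I would fix the point $x_0\in M^\bot$ realizing $\max_x\vol_k(K\cap(x+M))$; call this maximal section $K_0\subseteq x_0+M$ and abbreviate $a:=\vol_k(K_0)$. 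The idea is that $K\cap E$ fibers over $M$ (projection in $E$) with fibers inside $H^\bot$, and $K\cap H$ fibers over $M$ with fibers inside $E^\bot$; the point $x_0$ bounds the ``base'' simultaneously for both, because the $M$-marginal of each of $K\cap E$, $K\cap H$ is contained in $P_M K$, which in turn has $k$-measure controlled by $a$ — actually one needs a sharper comparison.

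The key step is the following Rogers--Shephard-type estimate in the product $M\oplus H^\bot\oplus E^\bot = \R^n$: for the convex body $K$, apply \eqref{eq:volumesplit} in two stages. First, viewing $\R^n = E\oplus E^\bot$ with $\dim E = i$, \eqref{eq:volumesplit} gives $\vol_i(P_EK)\,\vol_{n-i}(K\cap E^\bot)\le\binom{n}{i}\vol_n(K)$; this is not quite what we want since the left side has a projection and our target has the section $\vol_i(K\cap E)$. Instead I would work with the slab $W := P_{M^\bot}^{-1}(\text{pt})$ — more precisely, the approach that fits the stated equality case (involving $\conv$ of $K_1\subseteq E^\bot$ and $K_2\subseteq H^\bot$, and translates of a fixed $K_3$ along $M$-fibers) is: consider the ``base body'' $B := P_M(K\cap E)\cup\text{(stuff)}$ and observe that $K\cap(x+M)$ for $x$ in the relevant base is a translate of a fixed $K_3$ in the equality case. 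Concretely, I would slice $K$ by the $(2n-i-j+k)=n$... — let me instead slice by the affine subspace $x_0 + (M\oplus E^\bot\oplus H^\bot)$ restricted appropriately and invoke \eqref{eq:volumesplit} for the subspace $\widetilde E := M\oplus H^\bot$ of dimension $i$ inside the ambient $\R^n$, whose orthogonal complement is $E^\bot$: this yields $\vol_i(P_{\widetilde E}K)\vol_{n-i}(K\cap E^\bot)\le\binom{n}{i}\vol_n(K)$, and then a second application of Rogers--Shephard \emph{within} the subspace $\widetilde E = M\oplus H^\bot$ to the body $K\cap E$ (noting $K\cap E \subseteq E = \widetilde E$ up to the identification $E\cong\widetilde E$), splitting $E = M\oplus H^\bot$, gives $\vol_k(P_M(K\cap E))\,\vol_{n-j}\big((K\cap E)\cap H^\bot\big)\le\binom{i}{k}\vol_i(K\cap E)$, and symmetrically $\vol_k(P_M(K\cap H))\,\vol_{n-i}\big((K\cap H)\cap E^\bot\big)\le\binom{j}{k}\vol_j(K\cap H)$. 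Combining these with the trivial Fubini bound $\vol_n(K)\ge \vol_k(K_0)\cdot(\text{something})$ is the route; the binomial bookkeeping $\binom{i}{k}\binom{j}{k}/\binom{n-k}{n-i}$ must collapse to $\binom{n-k}{n-i}$, which is the arithmetic sanity check to perform.

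The cleanest correct route, and the one I would actually write, is this: set $M = E\cap H$, $k=\dim M$, and note $E^\bot, H^\bot, M$ are mutually orthogonal with $E^\bot\oplus H^\bot\oplus M$ of dimension $(n-i)+(n-j)+k = n$. Consider the convex body $C := K\cap E \subseteq E$, and the convex body $D := K\cap H\subseteq H$. Both $C$ and $D$ contain a common section over $M$ near $x_0$; more importantly, the product-type body whose volume we compare is $\conv\{(K\cap E)\times\{0\},\{0\}\times(K\cap H)\}$ living in $E^\bot\oplus M\oplus H^\bot$ — this is exactly the shape appearing in Lemma \ref{Ortholemma} and in the equality analysis. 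So: project $K$ onto $M^\bot = E^\bot\oplus H^\bot$, let the fiber over the generic base point be controlled by $\vol_k(K_0)$ via convexity of $K$, reducing the inequality to the statement that for a convex body $K\subseteq \R^n$ with all $M$-fibers of $k$-volume $\le a$, one has $\vol_i(K\cap E)\vol_j(K\cap H)\le\binom{n-k}{n-i}a\,\vol_n(K)$; and this last inequality is precisely Lemma \ref{Ortholemma} (with $\R^i$ there being $E^\bot$, $\R^m$ being $H^\bot$, after dividing out the $M$-direction which contributes the factor $a$) combined with Fubini. The equality case then transcribes directly from the equality case of Lemma \ref{Ortholemma} (forcing the fibers $K\cap(x+M)$ to all be translates of a fixed $K_3$, and $P_{M^\bot}K$ to be a translate of $\conv\{K_1,K_2\}$ with $K_1\subseteq E^\bot$, $K_2\subseteq H^\bot$), as announced.

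The main obstacle is making the ``dividing out the $M$-direction'' step rigorous: $K$ need not be a product $K_3\times(\text{base})$, so one cannot literally factor the volume. The honest argument is to run the moving-shadows/convexity argument of Lemma \ref{Ortholemma} directly in $\R^n$ with the $M$-fibers present, i.e., prove a parametrized version: for each base point $w$ in $P_{M^\bot}K$ the section $K\cap(w+M)$ has volume $\le a$, and the convex-hull body that sits inside $K$ (up to the Rogers--Shephard slack $\binom{n-k}{n-i}$) has volume at least $\binom{n-k}{n-i}^{-1}a^{-1}$ times... — this requires the Berwald/Borell-type concavity of fiber volumes along the base to keep the estimate sharp. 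I expect that to be where the real work, and the precise equality characterization, lives; modulo that, everything else is Fubini and binomial identities.
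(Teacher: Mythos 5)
Your proposal circles the right idea but never closes it, and you say so yourself. The final paragraph concedes that the ``dividing out the $M$-direction'' step is not rigorous because $K$ is not a product, and that a ``parametrized'' version of Lemma \ref{Ortholemma} using ``Berwald/Borell-type concavity of fiber volumes'' is what is really needed. That is exactly where the gap is, and the missing move is a specific one: rather than trying to run a parametrized geometric argument with Lemma \ref{Ortholemma}, you should pass to the functional statement Theorem \ref{th:splittRS}, which is precisely the parametrized form you are looking for. Concretely, set $F:=E\cap H$ and define $f\colon F^\bot\to[0,\infty)$ by $f(x):=\vol_k\bigl(K\cap(x+F)\bigr)$. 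Brunn's concavity principle makes $f$ $\tfrac1k$-concave, hence log-concave, on $F^\bot=E^\bot\oplus H^\bot$ (dimension $n-k$). Applying Theorem \ref{th:splittRS} to $f$ with the orthogonal split of $F^\bot$ into $H^\bot$ (dimension $n-j$) and $E^\bot$ (dimension $n-i$) gives
\[
\|f\|_\infty\int_{F^\bot}f \;\ge\; {n-k\choose n-j}^{-1}\int_{H^\bot}P_{H^\bot}f\,\int_{E^\bot}f,
\]
and one then identifies $\|f\|_\infty=\max_x\vol_k(K\cap(x+F))$, $\int_{F^\bot}f=\vol_n(K)$, $\int_{E^\bot}f=\vol_j(K\cap H)$ (Fubini, since $E^\bot\oplus F=H$), and $\int_{H^\bot}P_{H^\bot}f\ge\int_{H^\bot}f=\vol_i(K\cap E)$. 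The binomial factor is ${n-k\choose n-j}={n-k\choose n-i}$. This single application replaces all of your intermediate two-stage Rogers--Shephard attempts (whose bookkeeping, e.g.\ the claim that ${i\choose k}{j\choose k}\big/{n\choose k}$ ``must collapse'' to ${n-k\choose n-i}$, does not actually hold) and renders the ``parametrized Lemma \ref{Ortholemma}'' unnecessary.

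Two further concrete issues. First, in your reduction you write ``with $\R^i$ there being $E^\bot$'', but $\dim E^\bot=n-i$, not $i$; the indices in your transcription of Lemma \ref{Ortholemma} are off (though the resulting binomial ${(n-i)+(n-j)\choose n-i}={n-k\choose n-i}$ happens to come out right). Second, and more seriously, the estimate $\int_{H^\bot}P_{H^\bot}f\ge\vol_i(K\cap E)$ is the step that converts the projection of $f$ into a genuine section of $K$, and this step does not appear anywhere in your outline; without it you never produce $\vol_i(K\cap E)$ on the left-hand side. The equality analysis then falls out of the equality cases of Theorem \ref{th:splittRS} applied to $f$, not from Lemma \ref{Ortholemma} as you suggest.
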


\begin{proof}
Letting $F = E\cap H$, we have that $\dim (F) = j+i-n =k$, and $\R^n = F\oplus E^\bot\oplus  H^\bot$. %We need to prove that %for any $y$,
Let us consider
\[
f:F^{\bot}\rightarrow[0,\infty],\quad f(x):= \vol_{k}(K\cap(x+F)).
\]
Brunn's concavity theorem implies that $f$ is $\frac{1}{k}$-concave, and hence, in particular, log-concave.
We can thus apply Theorem \ref{th:splittRS} to the two orthogonal subspaces spanning $F^\bot$, which are $E^\perp$ and $H^\perp$.
We get that
\[ 	\Vert f\Vert_{\infty} \int_{F^\bot}f(z)dz
\ge {n-k\choose n-j}^{-1} \int_{H^\bot}P_{H^\perp}f(x)dx \int_{E^\bot }f(y)dy.  \]
Note that
\begin{eqnarray*}
\|f\|_\infty &=& \max_{x\in F^{\bot}}\vol_{k}(K\cap(x+F)),\qquad
P_{H^{\bot}}f(x)  =  \max_{y\in E^\bot} \vol_k(K\cap(x+y+F)),\\
\int_{E^\bot }f(y)dy &=& %\int_{E^\bot }\vol_{k}(K\cap(x+F)) dx =
\vol_{j}(K\cap H),\qquad {\rm and}  \qquad
\int_{F^\bot}f(z)dz   =   \vol_n(K).
\end{eqnarray*}
 Therefore, our inequality reads
\[ 	\max_{z\in \R^n}\vol_{k}(K\cap(z+F)) \vol_n(K)
\ge {n-k\choose n-j}^{-1} \int_{H^\bot} \max_{y\in E^\bot} \vol_k(K\cap(x+y+F)) dx \vol_{j}(K\cap H).  \]
Finally, we use the inequality, for %any fixed
$0\in E^\bot$,
\[ \int_{H^\bot} \max_{y\in E^\bot} \vol_k(K\cap(x+y+F)) dx \ge \int_{H^\bot}   \vol_k(K\cap(x+F))  = \vol_i(K\cap E),\]
which plugging back into the inequality gives our claim:
\[ 	\max_{z\in \R^n}\vol_{k}(K\cap(z+F)) \vol_n(K)
\ge {n-k\choose n-j}^{-1}  \vol_i(K\cap E)   \vol_{j}(K\cap H).  \]
The equality case is treated in Subsection \ref{subsec:Equality in RSwithprojandintersection}.
\end{proof}

Once we have an inequality with respect to sections, we can apply it to a symmetrization of a given function, and get an inequality involving projections.

\begin{proof}[Proof of Theorem \ref{th:localLoomisWhitneyReverse}]
Given a body $K$, denote $\widetilde K:=S_EK$, so that $\vol_n(K) = \vol_n(\widetilde K)$. Denote as before $F = E\cap H$ and $\dim F = k = i+j-n$.
Apply Lemma \ref{RSwithIntersection} to $\widetilde K$ to get that
\[ 	\max_{x\in \R^n}\vol_{k}(\widetilde K\cap(x+F)) \vol_n(K)
\ge {n-k\choose n-j}^{-1}  \vol_i(\widetilde K \cap E)  \vol_{j}(\widetilde K \cap H).  \]
Clearly, $\widetilde K \cap E = P_E K$ by the definition of the symmetrization of a body. On the other hand, as $E^\perp \subset H$ and the volumes along fibers $x+E^\perp$ are preserved by symmetrization, so is the volume along $H$, and we have that
\[ \vol_{j}(\widetilde K \cap H) = \vol_{j}(K \cap H).\]
Finally, as $F\subset E$ we see that
  \[ \max_{x\in \R^n} \vol_k ( \widetilde K\cap(x+F))  =  \max_{x\in F^\bot\cap E} \vol_k ( P_E K\cap(x+F))  \]
  and $F^\bot\cap E = H^\bot$. Thus, our main inequality can be written as
  \[ 	\max_{x\in \R^n}\vol_{k}(P_E K\cap(x+F)) \vol_n(K)
  \ge {n-k\choose n-j}^{-1}  \vol_i(P_EK) \vol_{j}(K \cap H),   \]
  which is the statement of Theorem \ref{th:localLoomisWhitneyReverse}, after we note that $P_E K\cap(x+F) = P_E(K\cap H)$.

  The equality case is treated in Subsection \ref{subsec:Equality in RSwithprojandintersection}.
\end{proof}

\begin{rmk}
Let us observe that the Hanner polytope
\[
K_{n,i,j}:=\underset{k=1,\dots,2n-i-j}{\conv}(\pm e_k)+\sum^n_{k=2n-i-j+1}[-e_k,e_k]
\]
together with $E:=\lin\{e_1,\dots,e_{n-j},e_{2n-i-j+1},\dots,e_n\}$ and $H:=\lin\{e_{n-j+1},\dots,e_n\}$, whenever $i+j\geq n+1$,
attains equality in Theorem \ref{th:localLoomisWhitneyReverse} and Lemma \ref{th:localRevLW}.
\end{rmk}

The next result is a functional version of Lemma \ref{RSwithIntersection}.
\begin{lemma}\label{lem:locLWFunctRev}
Let $f\in\mathcal{F}(\R^n)$ and let $H\in\mathcal{L}^n_i$ and $E\in\mathcal{L}^n_j$ be such that $i,j\in\{2,\dots,n-1\}$, $i+j\geq n+1$, and $E^{\bot}\subset H$. Let $k:=i+j-n$, and hence $1\leq k\leq n-2$. Then
\[
\int_{E}f(x)dx\,\int_{H}f(y)dy \leq {n-k\choose n-i}\max_{x\in(E\cap H)^\bot}\int_{x+E\cap H}f(w)dw\int_{\R^n}f(z)dz.
\]
%If $i+j=n+1$, equality holds if and only if there exist convex bodies $K_1\in H^\bot$, $K_2\in E^\bot$,
%g\in\mathcal F(\R)$, $L\in\mathcal L^n_{n-1}$ such that
%$E\cap H\nsubseteq L$, and for every $x\in (E\cap H)^\bot$ there exists $y_x\in L$ such that
%$P_{(E\cap H)^\bot}y_x=x$ with
%\[
%f|_{x+E\cap H}(z)=g(z+y_x-x)\chi_{K}(x),
%\]
%where $K=\conv(K_1, K_2)$.
\end{lemma}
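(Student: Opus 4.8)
The plan is to mimic the proof of Lemma \ref{RSwithIntersection}, but one level up: instead of the section-volume function $x\mapsto\vol_k(K\cap(x+F))$, which is what one feeds into Theorem \ref{th:splittRS} in the geometric case, I would feed the marginal of $f$ along $F:=E\cap H$.

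First I would fix the linear algebra. Since $E^\bot\subset H$ we have $E+H=\R^n$, hence $\dim(E\cap H)=i+j-n=k$; moreover $E^\bot\subset H$ also forces $E^\bot\perp H^\bot$ and $H^\bot\subset E$, so that $\R^n=F\oplus E^\bot\oplus H^\bot$ is an orthogonal splitting with $E=F\oplus H^\bot$ and $H=F\oplus E^\bot$. In particular $F^\bot=E^\bot\oplus H^\bot$ has dimension $n-k$, with $\dim E^\bot=n-j$ and $\dim H^\bot=n-i$. Next I would set $g:F^\bot\to[0,\infty)$, $g(x):=\int_{x+F}f(w)\,dw$. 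By Pr\'ekopa's theorem (the Pr\'ekopa-Leindler inequality applied to the log-concave function $(w,x)\mapsto f(w+x)$ on $F\oplus F^\bot$) the marginal $g$ is log-concave, and since $\int_{F^\bot}g=\int_{\R^n}f<\infty$ we get $g\in\mathcal{F}(F^\bot)$.

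The core of the argument is then a single application of Theorem \ref{th:splittRS}, in the ambient space $F^\bot$ (of dimension $n-k$) to the subspace $H^\bot$ (of dimension $n-i$), whose orthogonal complement inside $F^\bot$ is $E^\bot$. This yields
\[
\int_{H^\bot}P_{H^\bot}g(x)\,dx\;\int_{E^\bot}g(y)\,dy\;\le\;\binom{n-k}{n-i}\,\Vert g\Vert_\infty\int_{F^\bot}g(z)\,dz.
\]
It remains to translate each factor. By Fubini along the orthogonal splittings one has $\int_{E^\bot}g=\int_{F\oplus E^\bot}f=\int_{H}f$, $\int_{F^\bot}g=\int_{\R^n}f$, and $\Vert g\Vert_\infty=\max_{x\in(E\cap H)^\bot}\int_{x+E\cap H}f(w)\,dw$. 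For the last factor, for $x\in H^\bot$ the vector $0\in E^\bot$ is an admissible competitor in the definition of the shadow $P_{H^\bot}g$, so $P_{H^\bot}g(x)\ge g(x)$ and hence $\int_{H^\bot}P_{H^\bot}g\ge\int_{H^\bot}g=\int_{F\oplus H^\bot}f=\int_{E}f$. Chaining these (in)equalities gives exactly the stated bound.

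There is essentially no analytic difficulty here; the two points that need care are the dimension bookkeeping — one should feed $H^\bot$ (not $E^\bot$) to Theorem \ref{th:splittRS} so that the constant comes out as $\binom{n-k}{n-i}$ directly, or else use $E^\bot$ and invoke $\binom{n-k}{n-j}=\binom{n-k}{n-i}$, valid because $j-k=n-i$ — and verifying that the marginal $g$ genuinely lies in $\mathcal{F}(F^\bot)$, which is immediate from Pr\'ekopa-Leindler together with the integrability of $f$. Specializing $f=\chi_K$ recovers Lemma \ref{RSwithIntersection} (after the harmless relabelling $E\leftrightarrow H$, $i\leftrightarrow j$), which is a useful consistency check.
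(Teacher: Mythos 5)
Your proof is correct and follows essentially the same approach as the paper's: you form the marginal $g(x)=\int_{x+E\cap H}f$ on $(E\cap H)^\bot$, observe it is log-concave by Pr\'ekopa, apply Theorem \ref{th:splittRS} in the ambient space $(E\cap H)^\bot$ with subspace $H^\bot$, and then use $P_{H^\bot}g\ge g$ together with the Fubini identifications $\int_{E^\bot}g=\int_H f$, $\int_{H^\bot}g=\int_E f$, $\int_{(E\cap H)^\bot}g=\int_{\R^n}f$, $\Vert g\Vert_\infty=\max_{x}\int_{x+E\cap H}f$. The paper's argument is exactly this, with your $g$ named $F$.
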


\begin{proof}
Let us define the function
\[
F:(E\cap H)^\bot\rightarrow[0,\infty),\quad F(x):=\int_{x+E\cap H}f(y)dy.
\]
By Pr\'ekopa-Leindler inequality, $F$ is a log-concave function.
Theorem \ref{th:splittRS} implies that
\[
\begin{split}
{n-k\choose n-i} \max_{x\in(E\cap H)^\bot} & \int_{x+E\cap H}f(w)dw\,\int_{\R^n}f(z)dz
={n-k\choose n-i}\Vert F\Vert_{\infty}\int_{(E\cap H)^\bot}F(z)dz\\
&\geq\int_{H^\bot}P_{H^\bot}F(x)dx\,\int_{E^\bot}F(y)dy\\
&=\int_{H^\bot}\max_{x_0\in E^\bot}F(x+x_0)dx\int_{E^\bot}F(y)dy\\
&\geq\int_{H^\bot}F(x)dx\int_{E^\bot}F(y)dy\\
&=\int_{H^\bot}\int_{x+E\cap H}f(\bar{x})d\bar{x}dx\int_{E^\bot}\int_{x+E\cap H}f(\bar{y})d\bar{y}dy\\
&=\int_{E}f(x)dx\,\int_{H}f(y)dy.
\end{split}
\]
\end{proof}

\begin{proof}[Proof of Theorem \ref{th:locLWFunctProjSect}]
Let us define the symmetral of $f$ with respect to $E$ by $\tilde{f}:=S_Ef$. Using \eqref{eq:projequalssymm}, \eqref{eq:integsymmsubspace},
and \eqref{eq:integsymm} we have that
\[
\tilde{f}(x)=P_Ef(x)\quad\forall \,x\in E, \quad \int_H\tilde{f}(y)dy=\int_Hf(y)dy,\quad\text{and}\quad \int_{\R^n}\tilde{f}(z)dz=\int_{\R^n}f(z)dz.
\]
Moreover, since $\tilde{f}(x)\geq\tilde{f}(x+y)=\tilde{f}(x-y)$ for every $(x,y)\in E\times E^\bot$, then
\[
\max_{x_0\in(E\cap H)^\bot}\int_{x+E\cap H}\tilde{f}(w)dw=\max_{x_0\in H^\bot}\int_{x+E\cap H}\tilde{f}(w)dw=\max_{x\in H^\bot}\int_{x+E\cap H}P_Ef(w)dw.
\]
All this together with Lemma \ref{lem:locLWFunctRev} imply that
\[
\begin{split}
\int_EP_Ef(x)dx\int_Hf(y)dy =\int_E\tilde{f}(x)dx\int_H\tilde{f}(y)dy & \leq {n-k\choose n-i}\max_{x_0\in (E\cap H)^\bot}\int_{x+E\cap H}\tilde{f}(w)dw\int_{\R^n}\tilde{f}(z)dz\\
& ={n-k\choose n-i}\max_{x_0\in H^\bot}\int_{x+E\cap H}P_Ef(w)dw\int_{\R^n}f(z)dz.
\end{split}
\]
\end{proof}

\subsection{Direct (local) Loomis-Whitney inequalities}\label{subsec:LWdirect}

We start proving Theorem \ref{th:localLoomisWhitney}. We follow the ideas of
Giannopoulos et.~al.~in \cite[Lemma 4.1]{GiHaPa},  making use of a classic result by Berwald \cite[Satz 8]{Ber}. For the sake of completeness and since we were not able to find an English translation of this result, we will write a complete proof of it, together with its equality cases (cf.~Appendix \ref{sec:appendixBerwald}, Theorem \ref{berwald}).

\begin{proof}[Proof of Theorem \ref{th:localLoomisWhitney}]
On the one hand, observe that for every $x\in P_{E\cap H}K$
\begin{equation}\label{LocalLWeq1}
\begin{split}
&\vol_{n-k}(K\cap(x+(E\cap H)^\bot))\\
&\leq \vol_{n-j}(P_{x+H^\bot}(K\cap(x+(E\cap H)^\bot)))\,\vol_{n-i}(P_{x+E^\bot}(K\cap(x+(E\cap H)^\bot)))\\
&=\vol_{n-j}((P_EK)\cap(x+H^\bot))\,\vol_{n-i}((P_HK)\cap(x+E^\bot)).
\end{split}
\end{equation}
Hence
\begin{equation*}\label{LocalLWeq2}
\begin{split}
\vol_n(K) & =\int_{P_{E\cap H}K}\vol_{n-k}(K\cap(x+(E\cap H)^{\bot}))dx \\
& \leq \int_{P_{E\cap H}K}\vol_{n-j}((P_EK)\cap(x+H^{\bot}))\vol_{n-i}((P_HK)\cap(x+E^{\bot}))dx \\
& =\int_{P_{E\cap H}K}\left(\vol_{n-j}((P_EK)\cap(x+H^{\bot}))^{\frac{1}{n-j}}\right)^{n-j}\left(\vol_{n-i}((P_HK)\cap(x+E^{\bot}))^{\frac{1}{n-i}}\right)^{n-i}dx.
\end{split}
\end{equation*}
Let us define $f_1,f_2:P_{E\cap H}K\rightarrow[0,\infty)$ by
\[
f_1(x):=\vol_{n-j}((P_EK)\cap(x+H^{\bot}))^{\frac{1}{n-j}}\quad \text{and} \quad f_2(x):=\vol_{n-i}((P_HK)\cap(x+E^{\bot}))^{\frac{1}{n-i}},
\]
respectively. By Brunn-Minkowski theorem, $f_1$ and $f_2$ are concave functions. Berwald's Theorem (which is stated and proven in our appendix as Theorem \ref{berwald})
applied to $f_1,f_2$ with $\alpha_1=n-j$ and $\alpha_2=n-i$ (recalling that we set $k=i+j-n$) implies that
\begin{equation}\label{LocalLWeq2}
\begin{split}
\frac{1}{\vol_k(P_{E\cap H}K)} & \int_{P_{E\cap H}K}f_1(x)^{n-j}f_2(x)^{n-i}dx \\
&\leq \frac{{i\choose k}{j\choose k}}{{n\choose k}} \frac{1}{\vol_k(P_{E\cap H}K)^2}\int_{P_{E\cap H}K}f_1(x)^{n-j}dx\int_{P_{E\cap H}K}f_2(x)^{n-i}dx \\
& =\frac{{i\choose k}{j\choose k}}{{n\choose k}}\frac{1}{\vol_k(P_{E\cap H}K)^2} \vol_i(P_EK) \vol_j(P_HK).
\end{split}
\end{equation}
The above inequality together with \eqref{LocalLWeq2} implies
\[
\vol_n(K)\leq \frac{{i\choose k}{j\choose k}}{{n\choose k}} \frac{1}{\vol_k(P_{E\cap H}K)}\vol_i(P_EK)\vol_j(P_HK),
\]
which shows \eqref{dirLocLW}.
The equality case is treated in Section \ref{subsec:equality in directLW}.
\end{proof}

\begin{rmk}
Let us observe that the Hanner polytope
\[
K_{n,i,j}:=\conv\left\{\underset{k=1,\dots,n-j}{\conv}(\pm e_k)+\underset{k=n-j+1,\dots,2n-i-j}{\conv}(\pm e_k),\sum^n_{k=2n-i-j+1}[-e_k,e_k]\right\}
\]
together with $E:=\lin\{e_1,\dots,e_{n-j},e_{2n-i-j+1},\dots,e_n\}$ and $H:=\lin\{e_{n-j+1},\dots,e_n\}$, whenever $i+j\geq n+1$,
attains equality in Theorem \ref{th:localLoomisWhitney}.
\end{rmk}

Actually, we can prove the following more general result. It goes in the direction of \cite{BGL}  involving more than 2 subspaces into the game, but with some extra conditions.

\begin{thm}
Let $K\in\K^n$, $E_j\in\mathcal{L}^n_{i_j}$, $j=1,\dots,m$, such that $\R^n=E_1 \oplus \cdots \oplus E_m$. Then
\[
\vol_{i_1}(P_{E_1}K)^{m-2}\vol_n(K)\leq\frac{\prod_{j=2}^m{i_1+i_j\choose i_1}}{{n\choose i_1}}\prod_{j=2}^m\vol_{i_j}(P_{\lin(\{E_1,E_j\})K}).
\]
\end{thm}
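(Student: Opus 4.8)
The plan is to derive this by iterating Theorem~\ref{th:localLoomisWhitney}, peeling off one summand $E_j$ at a time. Throughout I read $\vol_{i_j}(P_{\lin\{E_1,E_j\}}K)$ as $\vol_{i_1+i_j}(P_{E_1\oplus E_j}K)$, since $\dim\lin\{E_1,E_j\}=i_1+i_j$, and I take the $E_j$ to be pairwise orthogonal, as the orthogonal projections and the reduction below implicitly require. If $m=2$ there is nothing to prove: then $E_1\oplus E_2=\R^n$, $\binom{i_1+i_2}{i_1}=\binom{n}{i_1}$, and both sides equal $\vol_n(K)$; so assume $m\ge3$. For $l=2,\dots,m$ put $V_l:=E_1\oplus E_l\oplus E_{l+1}\oplus\cdots\oplus E_m$ and $d_l:=\dim V_l=i_1+i_l+\cdots+i_m$, so that $V_2=\R^n$, $V_m=E_1\oplus E_m$, and $V_{l+1}\subsetneq V_l$.

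For each $l\in\{2,\dots,m-1\}$, apply Theorem~\ref{th:localLoomisWhitney} inside the Euclidean space $V_l$ to the convex body $P_{V_l}K\subset V_l$, with the two subspaces $E:=V_{l+1}$ and $H:=E_1\oplus E_l$. Here the orthogonal complement of $E$ inside $V_l$ is $E_l\subset H$, the intersection $E\cap H$ equals $E_1$, and the parameter $k=\dim E+\dim H-d_l$ equals $i_1$; the dimensional hypotheses $\dim E,\dim H\in\{2,\dots,d_l-1\}$ and $1\le k\le d_l-2$ hold because each of $\dim E_1,\dim E_l,\dim(E_{l+1}\oplus\cdots\oplus E_m)$ is positive (a one-line check with these three dimensions). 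Since orthogonal projections onto \emph{nested} subspaces compose, $P_{E_1}P_{V_l}K=P_{E_1}K$, $P_{E_1\oplus E_l}P_{V_l}K=P_{E_1\oplus E_l}K$, and $P_{V_{l+1}}P_{V_l}K=P_{V_{l+1}}K$, so Theorem~\ref{th:localLoomisWhitney} gives
\[
\vol_{i_1}(P_{E_1}K)\,\vol_{d_l}(P_{V_l}K)\le\frac{\binom{i_1+i_l}{i_1}\binom{d_{l+1}}{i_1}}{\binom{d_l}{i_1}}\,\vol_{i_1+i_l}(P_{E_1\oplus E_l}K)\,\vol_{d_{l+1}}(P_{V_{l+1}}K).
\]

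Finally, write $V:=\vol_{i_1}(P_{E_1}K)$ (which is positive, as $K$ is a body) and $a_l:=\vol_{d_l}(P_{V_l}K)$, so the previous display reads $V a_l\le c_l\,a_{l+1}$ with $c_l:=\binom{i_1+i_l}{i_1}\binom{d_{l+1}}{i_1}\binom{d_l}{i_1}^{-1}\vol_{i_1+i_l}(P_{E_1\oplus E_l}K)$. Multiplying these $m-2$ inequalities over $l=2,\dots,m-1$ and cancelling the common factors $a_3,\dots,a_{m-1}$ leaves $V^{m-2}a_2\le a_m\prod_{l=2}^{m-1}c_l$; since $a_2=\vol_n(K)$, $a_m=\vol_{i_1+i_m}(P_{E_1\oplus E_m}K)$, and the ratios $\binom{d_{l+1}}{i_1}/\binom{d_l}{i_1}$ telescope to $\binom{d_m}{i_1}/\binom{d_2}{i_1}=\binom{i_1+i_m}{i_1}/\binom{n}{i_1}$, collecting the surviving factors yields precisely
\[
\vol_{i_1}(P_{E_1}K)^{m-2}\vol_n(K)\le\frac{\prod_{j=2}^m\binom{i_1+i_j}{i_1}}{\binom{n}{i_1}}\prod_{j=2}^m\vol_{i_1+i_j}(P_{E_1\oplus E_j}K),
\]
as claimed. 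I do not expect a genuine obstacle here: Theorem~\ref{th:localLoomisWhitney} supplies all the content, and what remains is the bookkeeping of the telescoping product together with the verification that each invocation stays within its stated dimensional range — both routine.
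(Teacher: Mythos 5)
The paper states this theorem without supplying a proof, so there is no house argument to compare against; I have checked your argument on its own merits and it is correct. The iterative scheme is the natural one, and given that the authors immediately follow the theorem with the remark that the $m=3$ case recovers Theorem~\ref{th:localLoomisWhitney}, it is almost certainly what they had in mind. Your preliminary interpretive choices are both forced and right: the subscript in $\vol_{i_j}(P_{\lin\{E_1,E_j\}}K)$ is a typo for $i_1+i_j$ (the parenthesis is also misplaced), and the direct sum $\R^n=E_1\oplus\cdots\oplus E_m$ must be taken orthogonal, since otherwise $E_l$ need not equal the orthogonal complement of $V_{l+1}$ inside $V_l$, and the hypothesis $E^\perp\subset H$ of Theorem~\ref{th:localLoomisWhitney} would fail; this is consistent with the authors' own remark, where $E_1=E\cap H$, $E_2=E^\perp$, $E_3=H^\perp$ are pairwise orthogonal. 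The rest is careful bookkeeping which you carry out correctly: the identification $E\cap H=E_1$, $k=i_1$, the nesting identities $P_{W}=P_W\circ P_{V_l}$ for $W\subset V_l$, the dimension-range checks (which reduce to each $i_j\ge1$ and $l\le m-1$), the positivity of the $a_l$'s needed to cancel, and the telescoping of $\binom{d_{l+1}}{i_1}/\binom{d_l}{i_1}$. One small stylistic point: it would be cleaner to state the standing assumption $i_j\ge1$ (and hence $m\le n$) explicitly at the outset rather than letting it be implicit in the dimension checks.
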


\begin{rmk}
Theorem \ref{th:localLoomisWhitney} follows from the previous theorem choosing $m:=3$, $E_1:=E\cap H$, $E_2:=E^\bot$, and $E_3:=H^\bot$.
\end{rmk}

In order to conclude this section, we prove the functional version of the local Loomis-Whitney inequality.

\begin{proof}[Proof of Theorem \ref{thm: FunctionalLW}]
Let  $C$ be the set
$$
C=\{(x,t)\in\R^{n}\times[0,\infty): f(x)\geq e^{-t}\Vert f\Vert_\infty\}.
$$
Since $f$ is log-concave, then $C$ is convex. Besides
\begin{eqnarray*}
\int_C e^{-t}dxdt&=&\int_0^\infty e^{-t}\vol_n(\{x\in\R^n:f(x)\geq e^{-t}\Vert f\Vert_\infty\})dt\cr
&=&\int_0^1\vol_n(\{x\in\R^n:f(x)\geq s\Vert f\Vert_\infty\})ds\cr
&=&\int_{\R^n}\frac{f(x)}{\Vert f\Vert_\infty}dx.
\end{eqnarray*}
For any linear subspace $F\in\mathcal{L}^n_{l}$ let us call $\overline{F}=\textrm{span}\{F, e_{n+1}\}$ and notice that $\overline{E\cap H}=\overline{E}\cap\overline{H}$. Notice also that
\begin{eqnarray*}
\int_{P_{\overline{F}}C} e^{-t}dxdt&=&\int_0^\infty e^{-t}\vol_l(\{x\in F:(x,t)\in P_{\overline{F}}C\})dt\cr
&=&\int_0^\infty e^{-t}\vol_l(\{x\in F:\sup_{y\in F^\perp}f(x+y)\geq e^{-t}\Vert f\Vert_\infty\})dt\cr
&=&\int_0^1 \vol_l(\{x\in F:P_F f(x)\geq s\Vert f\Vert_\infty\})ds\cr
&=&\int_{F}\frac{P_Ff(x)}{\Vert f\Vert_\infty}dx.\cr
\end{eqnarray*}
Now, notice that
\begin{eqnarray*}
&&\int_C e^{-t}dxdt=\int_{P_{\overline{E\cap H}}C}e^{-t}\vol_{n-k}(C\cap(x,t)+(\overline{E\cap H})^\perp)dt\cr
&\leq&\int_{P_{\overline{E\cap H}}C}e^{-t}\vol_{n-i}(P_{\overline{E}^\perp}(C\cap((x,t)+(\overline{E\cap H})^\perp)))\vol_{n-j}(P_{\overline{H}^\perp}(C\cap((x,t)+(\overline{E\cap H})^\perp)))dt\cr
&=&\int_{P_{\overline{E\cap H}}C}e^{-t}\vol_{n-i}(P_{\overline{H}}C\cap((x,t)+\overline{E}^\perp))\vol_{n-j}(P_{\overline{E}}C\cap((x,t)+\overline{H}^\perp))dt\cr
&=&\int_{P_{\overline{E\cap H}}C}e^{-t}\left(\vol_{n-i}(P_{\overline{H}}C\cap((x,t)+\overline{E}^\perp))^\frac{1}{n-i}\right)^{n-i}
\left(\vol_{n-j}(P_{\overline{E}}C\cap((x,t)+\overline{H}^\perp))^\frac{1}{n-j}\right)^{n-j}dt\cr
\end{eqnarray*}
By Brunn-Minkowski inequality, the functions $f_1(x,t):=\vol_{n-i}(P_{\overline{H}}C\cap((x,t)+\overline{E}^\perp))^\frac{1}{n-i}$ and $f_2(x,t):=\vol_{n-j}(P_{\overline{E}}C\cap((x,t)+\overline{H}^\perp))^\frac{1}{n-j}$ are concave, and then by Theorem \ref{thm:BerwaldExponential} applied to $L=P_{\overline{E\cap H}}C=\{(x,t):P_{E\cap H}f(x)\geq e^{-t}\Vert f\Vert_\infty\}$ we have that this quantity is bounded above by
\begin{eqnarray*}
&&{n-k\choose n-i}\frac{1}{\int_{P_{\overline{E\cap H}}C} e^{-t}dxdt}\int_{P_{\overline{E\cap H}}C}e^{-t}\vol_{n-i}(P_{\overline{H}}C\cap((x,t)+\overline{E}^\perp))dxdt\cr
&&\times\int_{P_{\overline{E\cap H}}C}e^{-t}\vol_{n-j}(P_{\overline{E}}C\cap((x,t)+\overline{H}^\perp))dxdt\cr
&&={n-k\choose n-i}\frac{1}{\int_{P_{\overline{E\cap H}}C} e^{-t}dxdt}\int_{P_{\overline{H}}C}e^{-t}dxdt\int_{P_{\overline{E}}C}e^{-t}dxdt.\cr
\end{eqnarray*}
\end{proof}

\section{Rogers-Shephard type inequalities}\label{sec:Rogers-Shephard type inequalities}

In this section we derive various functional Rogers-Shephard type inequalities. We obtain most of them via Theorem \ref{th:splittRS} or Theorem \ref{SplittingColesanti}, following some of the ideas used by Rogers and Shephard in \cite{RS58}.

\begin{lemma}\label{lem:logConcF}
Let $f,g\in\F(\R^n)$. Then for the function
\[
F:\R^{2n}\times [0,1]\to[0,\infty),\quad F(z_1,z_2,t):=f\left(\frac{z_1}{t}\right)^tg\left(\frac{z_2-z_1}{1-t}\right)^{1-t},
\]
 which for $t=0$ is defined as $g(z_2-z_1)$and for $t=1$ is defined as $f(z_1)$, we have that $F\in\F(\R^{2n+1})$, i.e., $F$ is a log-concave function.
\end{lemma}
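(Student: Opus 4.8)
The plan is to pass to logarithms. Since $f,g\in\F(\R^n)$ I would write $f=e^{-u}$ and $g=e^{-v}$ with $u,v\colon\R^n\to(-\infty,\infty]$ convex, so that on the open slab $\R^n\times\R^n\times(0,1)$ one has $F=e^{-w}$ with
\[
w(z_1,z_2,t)=t\,u\!\left(\tfrac{z_1}{t}\right)+(1-t)\,v\!\left(\tfrac{z_2-z_1}{1-t}\right).
\]
The whole content is then the convexity of $w$ on this slab: the set $\{t\in\{0,1\}\}$ is Lebesgue-null in $\R^{2n+1}$, so it is immaterial for membership in $\F(\R^{2n+1})$, and the boundary values in the statement are exactly those obtained by letting $t\to0^+$ or $t\to1^-$. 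I would recognize $w$ as a sum of two \emph{perspective functions} precomposed with affine maps.

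Concretely, I would first record the classical fact that the perspective $\widetilde\phi(x,s):=s\,\phi(x/s)$ of a convex function $\phi\colon\R^n\to(-\infty,\infty]$ is jointly convex for $s>0$; this is visible at the level of epigraphs, since (with $\epi(\phi)=\{(x,r):\phi(x)\le r\}$)
\[
\epi(\widetilde\phi)=\{(x,s,r):s>0,\ (x/s,r/s)\in\epi(\phi)\}=\bigcup_{s>0}s\cdot\{(x,1,r):(x,r)\in\epi(\phi)\}
\]
is a convex cone because $\epi(\phi)$ is convex. Applying this to $\phi=u$ and $\phi=v$, the maps $(z_1,z_2,t)\mapsto(z_1,t)$ and $(z_1,z_2,t)\mapsto(z_2-z_1,1-t)$ are affine and, for $t\in(0,1)$, land in the region $\{s>0\}$ where the perspectives live; hence $(z_1,z_2,t)\mapsto t\,u(z_1/t)$ and $(z_1,z_2,t)\mapsto(1-t)\,v((z_2-z_1)/(1-t))$ are convex (a convex function composed with an affine map), and so is their sum $w$. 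Thus $F=e^{-w}$ is log-concave; its integrability is the computation $\int_{\R^{2n+1}}F=\int_0^1 t^{n}(1-t)^{n}\big(\int_{\R^n}f^{t}\big)\big(\int_{\R^n}g^{1-t}\big)\,dt<\infty$ (which reappears in the Rogers--Shephard applications), using that $\int f^{t}<\infty$ for $f\in\F(\R^n)$, $t\in(0,1)$.

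As a self-contained alternative I would instead check the log-concavity inequality directly: for $P=(a_1,a_2,s)$, $Q=(b_1,b_2,r)$ with $s,r\in(0,1)$ and $\lambda\in[0,1]$, set $t=(1-\lambda)s+\lambda r$; then $\tfrac{(1-\lambda)a_1+\lambda b_1}{t}$ is the convex combination of $\tfrac{a_1}{s},\tfrac{b_1}{r}$ with weights $\tfrac{(1-\lambda)s}{t},\tfrac{\lambda r}{t}$, and $\tfrac{(1-\lambda)(a_2-a_1)+\lambda(b_2-b_1)}{1-t}$ is the convex combination of $\tfrac{a_2-a_1}{1-s},\tfrac{b_2-b_1}{1-r}$ with weights $\tfrac{(1-\lambda)(1-s)}{1-t},\tfrac{\lambda(1-r)}{1-t}$; feeding these into the log-concavity of $f$ and of $g$ and multiplying yields $F((1-\lambda)P+\lambda Q)\ge F(P)^{1-\lambda}F(Q)^{\lambda}$. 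Either way there is no genuine obstacle; the only thing requiring a touch of care is the bookkeeping of the degenerate slices $t\in\{0,1\}$, which does not affect the conclusion. (This joint log-concavity is precisely what will make the partial suprema — such as $f\otimes g(\cdot,t)$ and $f\,\widetilde{\star}\,g$ — log-concave as well.)
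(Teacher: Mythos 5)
Your proof is correct and follows essentially the same route as the paper: you write $\log F$ as a sum of two perspective-type terms and verify joint concavity via the explicit convex-combination rewriting, which is exactly the computation the paper carries out for the term $t\log f(z_1/t)$; the epigraph/cone argument for perspectives is just a conceptual repackaging of the same step. The one small addition is that you also check integrability of $F$ (needed since $\F(\R^{2n+1})$ denotes \emph{integrable} log-concave functions), a point the paper leaves implicit.
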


\begin{proof}
The logarithm of $F$ is
$$
\log F(z_1,z_2,t)=t\log f\left(\frac{z_1}{t}\right)+(1-t)\log g\left(\frac{z_2-z_1}{1-t}\right).
$$
The function $t\log f\left(\frac{z_1}{t}\right)$ is a concave function, since $\log f$ is a concave function and then for any $0\leq\lambda\leq1$ and any $(z_1,z_2,t),(\overline{z}_1,\overline{z}_2,\overline{t})\in\R^{2n+1}$ we have
\begin{eqnarray*}
&&\left((1-\lambda)t+\lambda\overline{t}\right)\log f\left(\frac{(1-\lambda)z_1+\lambda\overline{z}_1}{(1-\lambda)t+\lambda\overline{t}}\right)=\\
&&\left((1-\lambda)t+\lambda\overline{t}\right)\log f\left(\frac{(1-\lambda)t}{(1-\lambda)t+\lambda\overline{t}}\frac{z_1}{t}+\frac{\lambda\overline{t}}{(1-\lambda)t+\lambda\overline{t}}\frac{\overline{z}_1}{\overline{t}}\right)\geq\\
&&(1-\lambda)t\log f\left(\frac{z_1}{t}\right)+\lambda\overline{t}\log f\left(\frac{\overline{z}_1}{\overline{t}}\right).
\end{eqnarray*}
In the same way, the function $(1-t)\log g\left(\frac{z_2-z_1}{1-t}\right)$ is concave and thus $F$ is log-concave.
\end{proof}

\begin{proof}[Proof of Theorem \ref{thm:RSFuncconvexhullbest}]
Define
\[  \Psi (z_1, z_2, t):\R^n\times\R^n\times (0,1)\to[0,\infty),\qquad  \Psi (z_1,z_2,t):=f\left(\frac{z_1}{1-t}\right)^{1-t}g\left(\frac{z_2}{t}\right)^t.
\]
%where for $t=1$ it is defined to be $g(z_2)$ and for $t=0$ to be $f(z_1)$.
Like in the previous lemma $\Psi$ is an integrable log-concave function.
Note that
\[ \int_{\R^{2n+1} } \Psi(z_1, z_2, t)dtdz_2dz_1 = \int_0^1 \left(t^n (1-t)^n \int_{\R^n} f(x)^{1-t}dx \int_{\R^n} g(y)^tdy\right) dt.
\]

%Denote the coordinate basis in the space $\R^n\times\R^n\times [0,1]$ by $\{e_j\}_{j=1}^{2n+1}$.
Letting $H:= e_{2n+1}^\perp\in\mathcal L^{2n+1}_{2n}$,
$F(z_1,z_2) = f(z_1)\chi_{\{0\}}(z_2)$ and $G(z_1,z_2) = g(z_2)\chi_{\{0\}}(z_1)$, then
\[  P_{H} \Psi (z_1, z_2) = \sup_{0\le t\le 1}\Psi (z_1, z_2, t) = F\tilde{\star} G (z_1, z_2).\]
Therefore, by Theorem \ref{th:splittRS} for the subspace of dimension $2n$ of $\R^{2n+1}$ we know that
\begin{equation}\label{eq:psec0}
 \int_{H} P_{H} \Psi (z_1, z_2)dz_2dz_1 \int_0^1 \Psi (0,0,t)dt \le {2n+1\choose 2n} \Vert \Psi\Vert_{\infty}\int_{\R^{2n+1} } \Psi(z_1,z_2,t)dtdz_2dz_1.
\end{equation}
Since $\Vert \Psi\Vert_\infty=\max\{\Vert f\Vert_\infty,\Vert g\Vert_\infty\}$ and assuming that this maximum is $\Vert f\Vert_\infty$, then
\[
\begin{split}
\int_H F\tilde{\star} G (z_1, z_2) dz_1dz_2 &\frac{\Vert f\Vert_\infty-\Vert g\Vert_\infty}{\log\Vert f\Vert_\infty-\log\Vert g\Vert_\infty}  =
\int_H F\tilde{\star} G (z_1, z_2) dz_1dz_2 \int_0^1  f(0)^{1-t}g(0)^tdt \\
& \le (2n+1) \Vert f\Vert_\infty\int_0^1 \left(t^n (1-t)^n \int_{\R^n} f(x)^{1-t}dx \int_{\R^n} g(y)^tdy\right) dt.
\end{split}
\]

We now consider in $H=\R^{2n}$ the subspaces
$E = \{ (x,-x)\in H: x\in \R^n\}$ and $E^{\perp} = \{ (y,y)\in H: y\in \R^n\}$.
Again by Theorem \ref{th:splittRS} we have that
\begin{equation}\label{eq:psec} \int_{E} P_{E}(F\tilde{\star}G)(x)dx \int_{E^\perp} F\tilde{\star}G(y)dy\le \binom{2n}{n} \Vert F\tilde{\star} G\Vert_\infty \int_H F\tilde{\star}G(z)dz,
\end{equation}
to which we shall apply the upper bound which we obtained above.

Given some point $(x,x)$ in $E$, let us compute $(P_E F\tilde{\star}G)$. To this end, we first notice that
\[ P_E ( h_1\tilde{\star} h_2) = (P_E h_1)\tilde{\star} (P_E h_2).
\]
Indeed, this is best understood using the language of epi-graphs of the logarithms, and the fact that projections and convex hulls commute.
Therefore, we first compute $P_E F$ and $P_EG$:
\begin{eqnarray*}
(P_E F)(x,-x) & = & \sup_{y\in \R^n}    f( {x+y})\chi_{\{0\}} ({-x+y}) = f(2x) \\
(P_E G)(x,-x) & = & \sup_{y\in \R^n}    g({-x+y})\chi_{\{0\}}({x+y}) = g(-2x).
\end{eqnarray*}
Thus
\begin{eqnarray*}
(P_E( F\tilde{\star}G))(x,-x) & = & (	(P_E F)\tilde{\star}(P_E G) )(x,-x)  \\
	 & = & \sup_{0\le t\le 1}\sup_{x=(1-t)x_1+tx_2} f^{1-t}(2x_1) g^t(-2x_2)\\
	 & = & f\tilde{\star} g_{-} (2x)
\end{eqnarray*}
where $g_{-}(x) := g(-x)$.
Integrating over $E$ and taking into account that $E$ is a diagonal subspace, we see that
\[ \int_{E}  P_E( F\tilde{\star}G)(x)dx = 2^{n/2} \int_{\R^n} P_E( F\tilde{\star}G)(x,-x) dx = 2^{-n/2} \int_{\R^n}f\tilde{\star} g_{-}(z)dz. \]
Recall that
\[
F\tilde{\star}G (x,x) = \sup_{0\le t\le 1}\Psi (x, x, t) = \sup_{0\le t\le 1} f\left(\frac{x}{1-t}\right)^{1-t}g\left(\frac{x}{t}\right)^t
\]
where for $t=0$ in the supremum we mean just $f(x)$, and for $t=1$ just $f(y)$,
and as before we get an extra factor when integrating with respect to $x$ because the subspace is in fact diagonal:
\[
\int_{E^\perp} F\tilde{\star}G(x)dx =  2^{n/2} \int_{\R^n} F\tilde{\star}G(x,x)dx = 2^{n/2} \int_{\R^n}\sup_{0\le t\le 1} f\left(\frac{x}{1-t}\right)^{1-t}g\left(\frac{x}{t}\right)^t dx.
\]

Since $\Vert F\tilde{\star}G\Vert_\infty=\max\{\Vert f\Vert_\infty,\Vert g\Vert_\infty\}=\Vert f\Vert_\infty$, hence \eqref{eq:psec} reads as
\[
\int_{\R^n}f\tilde{\star} g_{-} \int_{\R^n}\sup_{0\le t\le 1} \left\{f\left(\frac{x}{1-t}\right)^{1-t}g\left(\frac{x}{t}\right)^t\right\} dx
\leq {2n\choose n} \Vert f\Vert_\infty \int_HF\tilde{\star}G
\]
and inserting \eqref{eq:psec0} onto \eqref{eq:psec} we thus obtain that
\begin{equation*}
\begin{split}
&\int_{\R^n}f\tilde{\star} g_{-}(z)dz  \int_{\R^n}\sup_{0\le t\le 1} \left\{f\left(\frac{x}{1-t}\right)^{1-t}g\left(\frac{x}{t}\right)^t\right\}dx  \\
& \le (2n+1){2n\choose n}\frac{\log\Vert f\Vert_\infty-\log\Vert g\Vert_\infty}{\Vert f\Vert_\infty-\Vert g\Vert_\infty}\Vert f\Vert_\infty^2
\int_0^1\left(t^n(1-t)^n\int_{\R^n} f(x)^{1-t}dx\int_{\R^n} g(y)^{t}dy\right)dt.
\end{split}
\end{equation*}

Changing $g$ by $g_{-}$ we obtain the statement of the theorem. The equality case is studied in Subsection \ref{subsub:equalRSFuncConvHullBest}.
\end{proof}

\begin{proof}[Proof of Theorem \ref{thm:RSFunc(n+1)}]
Let us define the function
\[
F:\R^n\times\R^n\times[0,1]\to[0,\infty),\quad F(z_1,z_2,t):=f\left(\frac{z_1}{t}\right)^tg\left(\frac{z_2-z_1}{1-t}\right)^{1-t},
\]
 which for $t=0$ is defined as $g(z_2-z_1)$ and for $t=1$ is defined as $f(z_1)$. By Lemma \ref{lem:logConcF}, $F$ fulfills $F\in\F(\R^{2n+1})$.
Then $\Vert F\Vert_{\infty}=\max\{\Vert f\Vert_{\infty},\Vert g\Vert_{\infty}\}$ and
\[
\int_{\R^{2n+1}}F(z)dz=\int_0^1t^n(1-t)^n\int_{\R^n}f(x)^tdx\int_{\R^n}g(y)^{1-t}dydt.
\]
Moreover, let $H:=\lin\{e_{n+1},\dots,e_{2n+1}\}\in\L^{2n+1}_{n+1}$, and
observe that
\[
P_HF(z_2,t)=\sup_{z_1}f\left(\frac{z_1}{t}\right)^tg\left(\frac{z_2-z_1}{1-t}\right)^{1-t}=f\otimes g(z_2,t)
\]
and, taking $x_0=(0,0,1/2)\in H$, then
\[
\int_{x_0+H^{\bot}}F(z_1,0,1/2)dz_1=\int_{\R^n}f(2z_1)^{\frac{1}{2}}g(-2z_1)^{\frac{1}{2}}dz_1=\frac{1}{2^n}\int_{\R^n}\sqrt{f(x)g(-x)}dx.
\]
Using Theorem \ref{th:splittRS} we obtain the desired inequality.

The case of equality is studied in Subsection \ref{subsec:equality in rsfunc(n+1)}.
\end{proof}

Next corollary shows that Theorem 2.1 in \cite{AGJV} is a direct consequence of Theorem \ref{th:splittRS}.

\begin{cor}\label{RSrecovered1}
Let $f,g\in\mathcal{F}(\R^n)$. Then
\begin{equation}\label{eq:RSfuncAGJV}
\|f*g\|_{\infty}\int_{\R^n}f\star g(z)dz\leq{2n\choose n}\|f\|_{\infty}\|g\|_{\infty}
\int_{\R^n}f(x)dx\int_{\R^n}g(y)dy.
\end{equation}
Equality holds if and only if $\frac{f(x)}{\|f\|_{\infty}}=\chi_K(x)=\frac{g(-x)}{\|g\|_{\infty}}$ and $K$ is a full-dimensional simplex.
\end{cor}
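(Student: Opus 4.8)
The plan is to lift the inequality to $\R^{2n}$ and apply Theorem \ref{th:splittRS} to a product function, exactly mimicking the classical way in which Rogers and Shephard derived \eqref{eq:RogersShephard} from \eqref{eq:volumesplit}. First I would make a harmless normalization: the function $f*g$ is log-concave (a convolution of log-concave functions is log-concave, being a marginal of $(x,z)\mapsto f(x)g(z-x)$) and integrable, with $\int_{\R^n}f*g=\int_{\R^n}f\int_{\R^n}g$, so it attains its supremum; moreover, replacing $g$ by a translate $g(\cdot-c)$ merely translates $f*g$ and $f\star g$ and leaves $\Vert g\Vert_\infty$ and $\int g$ unchanged, hence it affects none of the four quantities in \eqref{eq:RSfuncAGJV}. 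Thus we may assume $\Vert f*g\Vert_\infty=(f*g)(0)=\int_{\R^n}f(a)g(-a)\,da$.

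Next I would set up the lift. On $\R^{2n}=\R^n\times\R^n$ define $\Phi(x,y):=f(x)g(y)$; then $\Phi\in\F(\R^{2n})$, with $\Vert\Phi\Vert_\infty=\Vert f\Vert_\infty\Vert g\Vert_\infty$ and $\int_{\R^{2n}}\Phi=\int_{\R^n}f\int_{\R^n}g$. Consider the two orthogonal $n$-dimensional subspaces $H:=\{(z,z):z\in\R^n\}$ and $H^\perp=\{(a,-a):a\in\R^n\}$. For the projection one computes, for $z\in\R^n$,
\[
P_H\Phi(z,z)=\sup_{a\in\R^n}f(z+a)g(z-a)=\sup_{u\in\R^n}f(u)g(2z-u)=(f\star g)(2z),
\]
while on the complementary subspace $\Phi(a,-a)=f(a)g(-a)$. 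Since the parametrizations $z\mapsto(z,z)$ and $a\mapsto(a,-a)$ both scale $n$-dimensional Lebesgue measure by the Gram factor $2^{n/2}$, we get
\[
\int_HP_H\Phi(x)\,dx=2^{n/2}\int_{\R^n}(f\star g)(2z)\,dz=2^{-n/2}\int_{\R^n}f\star g(z)\,dz,
\]
\[
\int_{H^\perp}\Phi(y)\,dy=2^{n/2}\int_{\R^n}f(a)g(-a)\,da=2^{n/2}\Vert f*g\Vert_\infty.
\]
Applying Theorem \ref{th:splittRS} to $\Phi$ in $\R^{2n}$ with the $n$-dimensional subspace $H$ (so the combinatorial factor is $\binom{2n}{n}$), the factors $2^{\pm n/2}$ cancel and the inequality becomes precisely
\[
\Vert f*g\Vert_\infty\int_{\R^n}f\star g(z)\,dz\le\binom{2n}{n}\Vert f\Vert_\infty\Vert g\Vert_\infty\int_{\R^n}f(x)\,dx\int_{\R^n}g(y)\,dy,
\]
which is \eqref{eq:RSfuncAGJV}. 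For $f=\chi_K$, $g=\chi_T$ one has $f\star g=\chi_{K+T}$ and $(f*g)(z)=\vol_n(K\cap(z-T))$, so this recovers \eqref{eq:FuncRS}.

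For the equality case, every step above is reversible, so equality in \eqref{eq:RSfuncAGJV} is equivalent to equality in this application of Theorem \ref{th:splittRS} to $\Phi$, i.e.\ to $\Phi/\Vert\Phi\Vert_\infty=\chi_C$ for a convex body $C\subset\R^{2n}$ whose sections $C\cap(H^\perp+\R^+v)$, $v\in H$, are the convex hull of $C\cap H^\perp$ with a single point. Since $\Phi/\Vert\Phi\Vert_\infty=(f/\Vert f\Vert_\infty)(g/\Vert g\Vert_\infty)$ is $\{0,1\}$-valued and both factors lie in $[0,1]$, both factors must themselves be characteristic functions (if, say, $f/\Vert f\Vert_\infty$ took a value in $(0,1)$ on a set of positive measure, so would the product, as in the proof of Lemma \ref{lem:MinLowerBound}); thus $f/\Vert f\Vert_\infty=\chi_K$, $g/\Vert g\Vert_\infty=\chi_L$ for convex bodies $K,L\subset\R^n$ and $C=K\times L$. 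With this, \eqref{eq:RSfuncAGJV} reads $\vol_n(K+L)\,\max_z\vol_n(K\cap(z-L))=\binom{2n}{n}\vol_n(K)\vol_n(L)$, which is exactly the equality instance of the Rogers--Shephard inequality \eqref{eq:RogersShephard} for a suitable positioning of $K$ and $-L$; by \cite{AlJiVi} this forces $K=-L$ to be a full-dimensional simplex. Undoing the translation from the first step yields the stated characterization, and the converse is a direct check. I expect the only delicate point to be this final reduction of the equality case — namely, checking that both factors of $\Phi$ are forced to be indicators and then matching the extremal structure of $C=K\times L$ relative to the diagonal subspace $H$ with the known equality configuration of \eqref{eq:RogersShephard}; the inequality itself is essentially immediate once the $\R^{2n}$/diagonal construction is in place.
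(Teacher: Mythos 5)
Your proof is correct and follows essentially the same route as the paper's: lift to $\R^{2n}$, translate so the max of $f*g$ sits at $0$, and apply Theorem~\ref{th:splittRS} to an $n$-dimensional subspace. The only difference is cosmetic: you use the literal product $\Phi(x,y)=f(x)g(y)$ together with the diagonal subspace $H=\{(z,z)\}$ and its antidiagonal complement, which forces you to track Gram/Jacobian factors $2^{\pm n/2}$ (correctly computed, and they cancel), whereas the paper absorbs the change of variables into the definition of the lifted function by taking $F(z_1,z_2):=f(z_2)g(z_1-z_2)$ so that the coordinate subspaces $\lin\{e_1,\dots,e_n\}$ and $\lin\{e_{n+1},\dots,e_{2n}\}$ serve directly with no Jacobians. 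Your equality argument (both factors forced to be indicators, then reduce to the equality case of \eqref{eq:RogersShephard}) matches the paper's as well.
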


\begin{proof}
It is clear that the function
\[
F:\R^n\times\R^n\to[0,\infty),\quad F(z_1,z_2):=f(z_2)g(z_1-z_2),
\]
is a log-concave function. On the one hand
\[
\int_{\R^{2n}}F(z)dz=\int_{\R^n}f(z_2)\left(\int_{\R^n}g(z_1-z_2)dz_1\right)dz_2=\int_{\R^n}f(x)dx\int_{\R^n}g(y)dy.
\]
On the other hand, letting $H:=\lin\{e_1,\dots,e_n\}\in\L^{2n}_n$, then
\[
P_HF(z_1,0)=\sup_{z_2\in H^\bot}F(z_1,z_2)=\sup_{z_2\in H^\bot}f(z_2)g(z_1-z_2)=f\star g(z_1)
\]
and
\[
\max_{z_0\in H}\int_{H^{\bot}}F(z_0,z)dz=
\max_{z_0\in H}\int_{H^{\bot}}f(z)g(z_0-z)dz=\|f*g\|_{\infty}.
\]
Finally, $\|F\|_{\infty}=\|f\|_{\infty}\|g\|_{\infty}$. Theorem \ref{th:splittRS} applied onto a suitable translation of
$F$ so that the latter maximum is attained a t 0, and $H$ implies that
\begin{equation*}
\begin{split}
&\|f\|_{\infty}\|g\|_{\infty}\int_{\R^n}f(x)dx\int_{\R^n}g(y)dy=\|F\|_{\infty}\int_{\R^{2n}}F(z)dz\\
&\geq{2n\choose n}^{-1}\int_{H}P_HF(x)dx\max_{x_0\in
H}\int_{x_0+H^{\bot}}F(y)dy\\
&={2n\choose n}^{-1}\int_{\R^n}f\star g(x)dx\|f*g\|_{\infty},
\end{split}
\end{equation*}
proving the assertion.

Assume that there is equality in \eqref{eq:RSfuncAGJV}. Then $\frac{F}{\Vert F\Vert_\infty}=\chi_C$, for some $C\in\K^{2n}$. Consequently,
$$
F(z_1,z_2)=f(z_2)g(z_1-z_2)=\Vert f\Vert_\infty \Vert g\Vert_\infty \quad\text{for every }(z_1,z_2)\in C
$$
and $0$ elsewhere. Hence, $\frac{f}{\Vert f\Vert_\infty}=\chi_K$, $\frac{g}{\Vert g\Vert_\infty}=\chi_L$, for some $K,L\in\K^n$. Then
equality in \eqref{eq:RSfuncAGJV} becomes
\[
\max_{x_0}\vol(K\cap(x_0-L))\,\vol(K+L)={2n\choose n}\vol(K)\,\vol(L),
\]
which by the equality case of \eqref{eq:RogersShephard} holds if and only if $K=-L$ is an $n$-dimensional simplex.
\end{proof}

In \cite{AGJV}, the equality cases in \eqref{eq:RoShpolar} were obtained  as a consequence of the characterization of the equality cases in the Theorem 2.4 there (stated as the following corollary). Here we show that Theorem 2.4 in \cite{AGJV} is a consequence of Theorem \ref{SplittingColesanti} and that the characterization of the equality cases can be deduced from the ones in the geometric case \eqref{eq:RoShpolar}.

\begin{cor}
Let $f,g\in\mathcal{F}(\R^n)$ such that $f(0)=g(0)=\Vert f\Vert_\infty=\Vert g\Vert_\infty=1$. Then
\begin{equation}\label{eq:Thm2.4AGJV}
\int_{\R^n}\sqrt{f(x)g(-x)}dx\int_{\R^n}\sqrt{f\star g(2x)}dx\leq
2^n\int_{\R^n}f(x)dx\int_{\R^n}g(x)dx.
\end{equation}
Equality holds if and only if there exist simplices $L_1,L_2\in\K^n$,
having a common vertex at the origin, and their $n$ facets containing $0$ are contained
in the same $n$ hyperplanes, such that
$f(x)=\exp(-\Vert x\Vert_{L_1})$ and $g(x)=\exp(-\Vert x\Vert_{-L_2})$.
\end{cor}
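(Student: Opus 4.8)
The plan is to derive \eqref{eq:Thm2.4AGJV} as a direct consequence of Theorem~\ref{SplittingColesanti}, applied in dimension $2n$ to a log-concave function assembled from $f$ and $g$, and then to read off the equality case from that of the geometric inequality \eqref{eq:RoShpolar}, thereby reversing the logic of \cite{AGJV} (where \eqref{eq:RoShpolar} had been deduced from the functional statement). Concretely, set $F\colon\R^{2n}=\R^n\times\R^n\to[0,\infty)$, $F(z_1,z_2):=f(z_2)g(z_1-z_2)$. As in the proof of Corollary~\ref{RSrecovered1}, $F$ is log-concave and integrable, with $\int_{\R^{2n}}F=\int_{\R^n}f\int_{\R^n}g$, $\Vert F\Vert_\infty=\Vert f\Vert_\infty\Vert g\Vert_\infty=1$, and $F(0,0)=f(0)g(0)=1=\Vert F\Vert_\infty$. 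Taking $H:=\lin\{e_1,\dots,e_n\}\in\mathcal{L}^{2n}_n$, one has $P_HF(z_1,0)=\sup_{z_2}f(z_2)g(z_1-z_2)=f\star g(z_1)$ and, on $H^\bot$, $F(0,z_2)=f(z_2)g(-z_2)$.

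Applying Theorem~\ref{SplittingColesanti} to $F$ with this $H$ (so $i=n$ and $2n-i=n$) and $\lambda=\frac12$ yields
\[
\frac{1}{2^{n}}\cdot\frac{1}{2^{n}}\int_{H}P_HF(x)^{1/2}\,dx\int_{H^\bot}F(y)^{1/2}\,dy\ \leq\ \int_{\R^{2n}}F(z)\,dz .
\]
Now $\int_{H}P_HF(x)^{1/2}dx=\int_{\R^n}\sqrt{f\star g(z_1)}\,dz_1=2^n\int_{\R^n}\sqrt{f\star g(2z)}\,dz$ (substitute $z_1=2z$) while $\int_{H^\bot}F(y)^{1/2}dy=\int_{\R^n}\sqrt{f(x)g(-x)}\,dx$, so the prefactors combine to $2^{-n}$ and we recover exactly \eqref{eq:Thm2.4AGJV}.

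For the equality case, sufficiency is verified by plugging in: if $f=\exp(-\Vert\cdot\Vert_{L_1})$ and $g=\exp(-\Vert\cdot\Vert_{-L_2})$, then $\int f=n!\vol_n(L_1)$ and $\int g=n!\vol_n(L_2)$; from $\epi(-\log(f\star g))=\epi(\Vert\cdot\Vert_{L_1})+\epi(\Vert\cdot\Vert_{-L_2})$ one computes $f\star g=\exp(-\Vert\cdot\Vert_{\conv(L_1,-L_2)})$, hence $\int\sqrt{f\star g(2z)}dz=n!\vol_n(\conv(L_1,-L_2))$; and since $\Vert x\Vert_{L_1}+\Vert x\Vert_{L_2}=\Vert x\Vert_{(L_1^\circ+L_2^\circ)^\circ}$ and $(-L_2)^\circ=-L_2^\circ$, one gets $\int\sqrt{f(x)g(-x)}dx=2^nn!\vol_n((L_1^\circ+L_2^\circ)^\circ)$. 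Cancelling the common factor $2^n(n!)^2$, inequality \eqref{eq:Thm2.4AGJV} becomes exactly \eqref{eq:RoShpolar} with $K=L_1$ and $L=-L_2$, and the hypothesis on $L_1,L_2$ is precisely its equality case. For necessity, observe that every manipulation passing from Theorem~\ref{SplittingColesanti} to \eqref{eq:Thm2.4AGJV} was an identity, so equality in \eqref{eq:Thm2.4AGJV} forces equality in Theorem~\ref{SplittingColesanti} for $F$; in particular $F=\exp(-\Vert\cdot\Vert_C)$ for some convex body $C\subset\R^{2n}$ with $0\in C$. Slicing $C$ along $\{z_2=0\}$ and along the diagonal $\{z_1=z_2\}$ gives $g=\exp(-\Vert\cdot\Vert_{C_0})$ and $f=\exp(-\Vert\cdot\Vert_{C_\Delta})$; integrability of $f,g$ together with $\Vert f\Vert_\infty=\Vert g\Vert_\infty=1$ forces $C_0,C_\Delta$ to be genuine convex bodies with $0$ in them, so $f,g$ have the exponential form, and then equality in \eqref{eq:Thm2.4AGJV} reduces, as above, to equality in \eqref{eq:RoShpolar} for $K:=C_\Delta$, $L:=C_0$, whose characterization delivers the asserted simplices (with $L_1=C_\Delta$ the diagonal slice and $-L_2=C_0$ the coordinate slice of $C$).

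The step I expect to be the main obstacle is this necessity direction: one must be certain that no slack is hidden in the reduction (so that equality genuinely propagates backwards to Theorem~\ref{SplittingColesanti}), and then extract \emph{two} individual gauge exponentials from a single gauge exponential on $\R^{2n}$ via the diagonal and coordinate slices, carefully checking --- using the integrability of $f,g$ and the normalization $\Vert f\Vert_\infty=\Vert g\Vert_\infty=1$ --- that these slices are nondegenerate convex bodies containing the origin, before one is entitled to invoke the known equality case of \eqref{eq:RoShpolar}.
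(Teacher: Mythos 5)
Your proof of the inequality matches the paper's argument exactly: both assemble the $2n$-dimensional log-concave function $F(z_1,z_2)=f(z_2)g(z_1-z_2)$ (the paper uses $F(x,y)=f(x)g(y-x)$ with $H=\lin\{e_{n+1},\dots,e_{2n}\}$, which is the same thing under a coordinate relabeling), apply Theorem~\ref{SplittingColesanti} with $\lambda=\tfrac12$ and $i=n$, and simplify the prefactors. The constants work out correctly.

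Where you genuinely diverge is the necessity direction of the equality case, and your route is cleaner. The paper extracts from the equality case of Theorem~\ref{SplittingColesanti} only the two facts $f(x)g(-x)=\exp(-\Vert x\Vert_{K_1})$ and $f\star g(y)=\exp(-\Vert y\Vert_{K_2})$, writes $f=e^{-u}$, $g=e^{-v}$, and then runs a somewhat delicate convexity argument on the functional identity $u(x)+v(-x)=\Vert x\Vert_{K_1}$ to deduce that $u$ and $v$ are positively homogeneous (hence gauge functions). You instead use the full strength of the equality case: $F=\exp(-\Vert\cdot\Vert_C)$ for a convex body $C\subset\R^{2n}$, and slice $C$ along $\{z_2=0\}$ and along the diagonal $\{z_1=z_2\}$; since $f(0)=g(0)=1$ these two slices hand you $g=\exp(-\Vert\cdot\Vert_{C_0})$ and $f=\exp(-\Vert\cdot\Vert_{C_\Delta})$ at once, with no further analysis. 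Both approaches then reduce to the equality case of \eqref{eq:RoShpolar} (with $K=L_1$, $L=-L_2$, noting $K^\circ-L^\circ=L_1^\circ+L_2^\circ$). Your version buys you a shorter, more structural argument and you verify sufficiency explicitly; the paper's version would survive even if one only knew the marginal-slice consequences of the equality case, but there is no need for that generality here. The nondegeneracy of $C_0$ and $C_\Delta$ (boundedness from compactness of $C$, full-dimensionality from the requirement $\int f,\int g>0$) that you flag as a possible obstacle is indeed a point to articulate, but it is routine and does not cause trouble.
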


\begin{proof}
Let $F(x,y):=f(x)g(y-x)$, $x,y\in\R^n$, and let $H:=\lin\{e_{n+1},\dots,e_{2n}\}\in\L^{2n}_n$. Theorem \ref{SplittingColesanti}
implies that
\[
\int_{\R^{2n}}F(x,y)dxdy\geq\frac{1}{2^{2n}}\int_{\R^n}P_HF(0,y)^{\frac{1}{2}}dx\int_{\R^n}F(x,0)^{\frac{1}{2}}dy.
\]
On the one hand
\[
\int_{\R^{2n}}F(x,y)dxdy=\int_{\R^n}f(x)dx\int_{\R^n}g(y)dy.
\]
Second,
\[
P_HF(0,y)^{\frac{1}{2}}=\sqrt{\sup_{x}f(x)g(y-x)}=\sqrt{f\star g(y)},
\]
and third,
\[
F(x,0)=\sqrt{f(x)g(-x)}.
\]
Since
\[
\frac{1}{2^n}\int_{\R^n}\sqrt{f\star g(y)}dy=\int_{\R^n}\sqrt{f\star g(2y)}dy,
\]
we conclude that
\[
\int_{\R^n}f(x)dx\int_{\R^n}g(y)dy\geq\frac{1}{2^n}\int_{\R^n}\sqrt{f\star g(2y)}dy
\int_{\R^n}\sqrt{f(x)g(-x)}dx.
\]

Equality in \eqref{eq:Thm2.4AGJV} implies that there is equality in Theorem \ref{SplittingColesanti}
for $F$, $H$, and $\lambda=1/2$. In particular, there exist $K_1,K_2\in\mathcal K^n$, $K_1\subset H^\bot$, $K_2\subset H$, such that
\[
f(x)g(-x)=\exp(-\Vert x\Vert_{K_1})\quad\text{and}\quad f\star g(y)=\exp(-\Vert y\Vert_{K_2}).
\]
If $f=e^{-u}$ and $g=e^{-v}$, for some $u,v$ convex functions, the first condition above rewrites as
\[
u(x)+v(-x)=\Vert x\Vert_{K_1} \quad\text{for every }x\in\R^n.
\]
Then for any $x\in\R^n$, $t\geq 0$, and $\lambda\in[0,1]$ we have that
\[
\begin{split}
((1-\lambda)+\lambda t)\Vert x\Vert_{K_1} & =  \Vert (1-\lambda)x+\lambda(tx)\Vert_{K_1}\\
& = u((1-\lambda)x+\lambda(tx))+v((1-\lambda)(-x)+\lambda(-tx)) \\
& \leq (1-\lambda)u(x)+\lambda u(tx)+(1-\lambda)v(-x)+\lambda v(-tx) \\
& =(1-\lambda)(u(x)+v(-x))+\lambda (u(tx)+v(-tx))\\
& = (1-\lambda)\Vert x\Vert_{K_1}+\lambda\Vert tx\Vert_{K_1}\\
& =((1-\lambda)+\lambda t)\Vert x\Vert_{K_1},
\end{split}
\]
which then implies equality in all inequalities above, thus
\[
u(tx)=t\,u(x)\quad\text{and}\quad v(tx)=t\,v(x)\quad\text{for every }x\in\R^n,\,t\geq 0.
\]
Denoting by $L_1:=\{x\in\R^n:u(x)\leq 1\}$ and $L_2:=\{x\in\R^n:v(x)\leq 1\}$, it is straightforward to show that the
equations above imply that
$u(x)=\Vert x\Vert_{L_1}$ and $v(x)=\Vert x\Vert_{L_2}$. Therefore
\[
\begin{split}
& n!\vol_n(\conv(\{L_1,L_2\})n!\vol_n\left(\left(\frac{L_1^\circ-L_2^\circ}{2}\right)^\circ\right) \\
& =\int_{\R^n}\sqrt{f\star g(2y)}dy\int_{\R^n}\sqrt{f(x)g(-x)}dx\\
& =2^n\int_{\R^n}f\int_{\R^n}g\\
& =2^nn!\vol_n(L_1)n!\vol_n(L_2),
\end{split}
\]
thus implying by the equality case of \eqref{eq:RoShpolar} that $L_1$ and $L_2$ are simplices with a
common vertex at the origin and such that the $n$-facets of $L_1$ and $-L_2$ touching $0$ are contained in the same hyperplanes.
\end{proof}

\section{Equality cases}\label{sec:Equality cases}

\subsection{Equality cases of Section \ref{sec:Estimates by marginals}}

\subsubsection{Equality cases in Lemma \ref{Ortholemma}}\label{subsect:equality in lemma Ortholemma}
For the equality case, if one of the bodies has empty interior then clearly both sides are $0$. Assume both are full-dimensional bodies, and there is equality for some $(x,y)\not\in L\times K$. If $x\in L$ and $y\not\in K$, by \eqref{eq:volumesplit} then
\begin{eqnarray*} \vol_{i+m}(\conv\{K\times\{x\} ,\{y\} \times L\}) &=&  \vol_{i+m}(\conv\{\conv(y,K)\times\{x\} ,\{y\} \times L\})\\&\geq& \binom{i+m}{i}^{-1}\vol_i(\conv(y,K))\,\vol_m(L)\\
&>& \binom{i+m}{i}^{-1}\vol_i(K)\,\vol_m(L)
\end{eqnarray*}
contradicting the equality (as $y\not\in K$) and similarly if $x\in K$ and $y\not\in L$. Finally, if
$x\not\in L$
and $y\not\in K$
we notice that by convexity there will be equality also for
$(x_\lambda, y_\lambda):= (1-\lambda)(x_0, y_0) + \lambda(x,y)$
for any $(x_0, y_0)\in L\times K$ and $\lambda \in (0,1)$. We can thus choose,  $x_0$ in the relative interior of $L$ and $y_0$ on the boundary of $K$ so that for some $\lambda$ we have $x_\lambda \in L $ and $y_\lambda\not\in K$, getting again a
contradiction.

%\subsection{Equality case of Theorem \ref{th:splittRS}}\label{subsec:Equality case of Theorem splittRS}

\subsubsection{Equality case of Lemma  \ref{orthoRS}}\label{subsub:orthoRS}
Here it is the equality case in the lemma about the volume of the convex hull of two functions in orthogonal subspaces, namely Lemma \ref{orthoRS}.

Let us now assume that we have  equality in \eqref{eq:ProjSect}. On
the one hand, if we have equality in (c) then
\[
\int_{\R^i}\int_{\R^m}\min\left\{\frac{f(x)}{\Vert F\tilde{\star} G\Vert_\infty},\frac{g(y)}{\Vert F\tilde{\star} G\Vert_\infty}\right\}dxdy
=
\int_{\R^i}\frac{f(x)}{\Vert F\tilde{\star} G\Vert_\infty}dx\int_{\R^m}\frac{g(y)}{\Vert F\tilde{\star} G\Vert_\infty}dy,
\]
which by Lemma \ref{lem:MinLowerBound} implies that one of the functions in the minimum is a characteristic function. We can assume, without loss of generality, that
%implies that $\min\{\frac{f(x)}{\Vert F\tilde{\star} G\Vert_\infty},\frac{g(y)}{\Vert F\tilde{\star} G\Vert_\infty}\}=\frac{f(x)}{\Vert F\tilde{\star} G\Vert_\infty}\frac{g(y)}{\Vert F\tilde{\star} G\Vert_\infty}$ for almost
%every $(x,y)\in\R^i\times\R^m$. Assume that there exists $x_0\in \inter(\supp (f))$ such that $0<\frac{f(x_0)}{\Vert F\tilde{\star} G\Vert_\infty}< 1$. Then $\frac{g(y)}{\Vert F\tilde{\star} G\Vert_\infty}\in\{0,1\}$ for almost every $y\in\R^m$ and, since $g$ is continuous in the interior of its support
$\frac{g}{\Vert F\tilde{\star} G\Vert_\infty}$ is a characteristic function $\chi_L$ for some $L\in\K^m$ and
$\max\{\Vert f \Vert_\infty,\Vert g\Vert_\infty\}=\Vert g\Vert_\infty$. We will now show that  $f$ is a multiple of a characteristic too.

Equality on (b) implies, by Lemma \ref{Ortholemma}, that $0\in\{x\in\R^i:f(x)\geq t\Vert g\Vert_\infty\}\times L$, for every $t\in[0,\frac{\Vert f\Vert_\infty}{\Vert g\Vert_\infty}]$, hence implying that $f(0)=\|f\|_{\infty}$.

Besides, equality in (a) implies that $\Vert f\Vert_\infty= \Vert g\Vert_{\infty}$. Indeed, assume
that $\Vert f\Vert_\infty<\Vert g\Vert_\infty$. Then for every $t\in(\frac{\Vert f\Vert_{\infty}}{\Vert g\Vert_{\infty}},1)$ we have that
\[
\conv(\{(x,0)\in\R^i\times\{0\}:f(x)\geq t\Vert g\Vert_\infty\})\cup (\{0\}\times L)
\]
is an empty set and has volume $0$. On the contrary,
\[
\left\{(z_1,z_2):\sup_{0<\theta<1}f\left(\frac{z_1}{\theta}\right)^{\theta}
g\left(\frac{z_2}{1-\theta}\right)^{1-\theta}\geq t\Vert F\tilde{\star} G\Vert_\infty\right\}=\{(z_1,z_2):F\tilde{\star} G(z_1,z_2) \geq t\Vert F\tilde{\star} G\Vert_\infty\}
\]
has positive volume, which contradicts the equality in (a).

Let us now prove that $\frac{f}{\Vert f\Vert_\infty}$ is a characteristic function. Assume that there exists
$x_0\in\inter( \supp f)$ such that $0<\frac{f(x_0)}{\Vert f\Vert_{\infty}}=a<1$. Then since $f$ is continuous in $\inter(\supp f)$ for every $\varepsilon\in(0,\varepsilon_0)$ with $\varepsilon_0<\min\{1-a,a\}$ there exists a Euclidean ball $U_{\varepsilon}$ centered at $x_0$ such that
$a-\varepsilon < \frac{f(x)}{\Vert f\Vert_\infty}<a+\varepsilon<1$ for all $x\in U_{\varepsilon}$. Consequently,
\begin{itemize}
	\item $U_\varepsilon\cap \{x\in\R^i:f(x)\geq (a+\varepsilon)\Vert f\Vert_\infty\}=\emptyset$ and
	\item $U_\varepsilon\subseteq \{x\in\R^i:f(x)\geq (a-\varepsilon)\Vert f\Vert_\infty\}$.
\end{itemize}
Since $\lim_{\theta\rightarrow 0}(a-\varepsilon)^\theta=1$, there exists $\theta_0(\varepsilon)\in(0,1)$ such that
$a+\varepsilon<(a-\varepsilon)^\theta<1$ for every $\theta\in(0,\theta_0(\varepsilon))$. Consequently, for every
$x\in U_\varepsilon$ and $\theta\in(0,\theta_0(\varepsilon))$ then
$$
\left(\frac{f(x)}{\Vert f\Vert_\infty}\right)^\theta > (a-\varepsilon)^\theta > a+\varepsilon
$$
and then
$$
\left\{(z_1,z_2):\sup_{0<\theta<1}\left(\frac{f\left(\frac{z_1}{\theta}\right)}{\Vert f\Vert_\infty}\right)^\theta\chi_L^{1-\theta}\left(\frac{z_2}{1-\theta}\right)\geq a+\varepsilon\right\}
$$
%$$
%\left\{(z_1,z_2):\sup_{(\theta x,(1-\theta)y),\,y\in L}\left(\frac{f(x)}{\Vert f\Vert_\infty}\right)^\theta\geq a+\varepsilon\right\}
%$$
contains
$$
\conv(((U_\varepsilon\cup\{x\in\R^i:f(x)\geq (a+\varepsilon)\Vert f\Vert_\infty\})\times\{0\})\cup (\{0\}\times L)).
$$
Now, since $U_\varepsilon\cap \{x\in\R^i:f(x)\geq (a+\varepsilon)\Vert f\Vert_\infty\}=\emptyset$ there exists an affine hyperplane
$H\subseteq\R^i$ separating both sets. Besides, since $\{0\}^i\in \{x\in\R^i:f(x)\geq (a+\varepsilon)\Vert f\Vert_\infty\}$ then
the volume of
\[
\conv(((U_\varepsilon\cup\{x\in\R^i:\frac{f(x)}{\Vert f\Vert_\infty}\geq a+\varepsilon\})\times\{0\})\cup (\{0\}\times L))
\]
is strictly larger than the volume of
\[
\conv((\{x\in\R^i:f(x)\geq (a+\varepsilon)\Vert f\Vert_\infty\}\times\{0\})\cup (\{0\}\times L)).
\]
Therefore, the volume of
\[
\left\{(z_1,z_2):\sup_{0<\theta<1}\left(\frac{f\left(\frac{z_1}{\theta}\right)}{\Vert f\Vert_\infty}\right)^\theta\chi_L^{1-\theta}\left(\frac{z_2}{1-\theta}\right)\geq a+\varepsilon\right\}
\]
is strictly larger than the volume of
\[
\conv((\{x\in\R^i:f(x)\geq (a+\varepsilon)\Vert f\Vert_\infty\}\times\{0\}^m)\cup (\{0\}^i\times L))
\]
for every $\varepsilon\in(0,\varepsilon_0)$, which contradicts the equality in (a). Thus
$\frac{f}{\Vert f\Vert_\infty}$ takes values only 0 or 1, and hence it is the characteristic function of a convex set.

\subsubsection{Equality case of Theorem \ref{th:splittRS}}\label{subsub:splittRS}

Equality holds in \eqref{eq:splittingTheorem} if and only if there is equality for every inequality in the proof.
Since we have that
\[
\Vert f\Vert_{\infty}\int_{\R^n}F\tilde{\star}G(z)dz = {n\choose i}^{-1} \int_{H}P_Hf(x)dx \int_{H^\bot }f(y)dy,
\]
equality in Lemma \ref{orthoRS} implies that $\Vert P_Hf|_{H^{\bot}}\Vert_\infty=\Vert f|_{H^{\bot}}\Vert_\infty=\Vert f\Vert_\infty$,
and there exist $L_1\in\K^i$ and $L_2\in\K^{n-i}$ such that $0\in L_1$, $0\in L_2$, and
$$
\frac{P_Hf}{\Vert f\Vert_\infty}=\frac{S_Hf|_H}{\Vert f\Vert_\infty}=\chi_{L_1}\quad\text{and}\quad
\frac{S_{H}f|_{H^\bot}}{\Vert f\Vert_\infty}=\chi_{L_2}.
$$
Furthermore, notice that $L_2$ has to be a Euclidean ball. Since we also have that
\[
\int_{\R^n}f(z)dz=\int_{\R^n}S_Hf(z)dz=\int_{\R^n}F\tilde{\star}G(z)dz,
\]
we then have that
$$
S_Hf=F\tilde{\star}G=\Vert f\Vert_\infty\chi_C,
$$
with
$$
C=\conv(\{L_1\times\{0\},\{0\}\times L_2\}).
$$
Consequently there exists $K\in\K^n$ such that $\frac{f}{\Vert f\Vert_\infty}=\chi_K$,
where $S_H(K)=C$. Hence equality in \eqref{eq:splittingTheorem} reads as
$$
\vol(K)={n\choose i}^{-1}\vol(P_HK)\max_{x_0\in H}\vol(K\cap H^\bot).
$$
By the characterization of equality in \eqref{eq:volumesplit} we obtain that for every $v\in H$ the intersection $K\cap (H^{\bot}+\R^+v)$ is the convex hull of $K\cap H^{\bot}$ and one point.

\subsubsection{Equality case of Lemma \ref{th:SplittColesLambda}}\label{subsub:lemasplittcoleslambda}

Let $u:\R^n\rightarrow[0,\infty]$ be a convex function such that
$\frac{f}{\Vert f\Vert_\infty}=e^{-u}$. We can assume, without loss of generality, that $\Vert f\Vert_\infty=1$ and then $u(0)=0=\min u(z)$. $f$ attains equality in \eqref{splittingColLambda} if and only if
\[
u\left((1-\lambda)\left(\frac{z_1}{1-\lambda},0\right)+\lambda\left(0,\frac{z_2}{\lambda}\right)\right)=(1-\lambda) u\left(\frac{z_1}{1-\lambda},0\right)+\lambda u\left(0,\frac{z_2}{\lambda}\right)
\]
for every $(z_1,z_2)\in\R^n$. Fixing $z_2=0$, this means that for every $z_1\in H$
\[
u\left((1-\lambda)\frac{z_1}{1-\lambda},0\right)=(1-\lambda) u\left(\frac{z_1}{1-\lambda},0\right).
\]
Since this is true for every $z_1\in H$ and $u(0)=0$, then for every $t\geq0$ and every $z_1\in H$
\[
u(tz_1,0)=t\,u(z_1,0).
\]
Letting $K:=\{z_1\in H:u(z_1,0)\leq 1\}$, notice that $0\in K$ and
we conclude that $u(z_1,0)=\Vert z_1\Vert_K$ for every $z_1\in H$.
%, where we understand that $u(z_1,0)=\infty$ if $0\in\bd K$ and the ray from 0 through $z_1$ only intersects $K$ at 0.
By analogous arguments, we have for
$L:=\{z_2\in H^\bot:u(0,z_2)\leq 1\}$ that $u(0,z_2)=\Vert z_2\Vert_L$ for every $z_2\in H^\bot$.
Finally, we also obtain that for every $z=(z_1,z_2)\in\R^n$, since $z=(1-\lambda)\left(\frac{z_1}{1-\lambda},0\right)+\lambda\left(0,\frac{z_2}{\lambda}\right)$
\[
\begin{split}
\frac{f(z)}{\Vert f\Vert_\infty}=\exp(-u(z_1,z_2)) & =\exp\left(-(1-\lambda)u\left(\frac{z_1}{1-\lambda},0\right)-\lambda u\left(0,\frac{z_2}{\lambda}\right)\right)\\
& = \exp\left(-(1-\lambda)\left\Vert \frac{z_1}{1-\lambda}\right\Vert_K-\lambda\left\Vert \frac{z_2}{\lambda}\right\Vert_L\right)) \\
& =\exp(-\Vert z_1\Vert_K-\Vert z_2\Vert_L),
\end{split}
\]
hence concluding the proof.

\subsubsection{Equality case of Theorem \ref{SplittingColesanti}}\label{subsub:thsplittcoleslambda}
%\textcolor{red}{THIS CHARACTERIZATION IS WRONG}
%\textcolor{blue}{I think not. Again, let's discuss the note I wrote on "why we allow $0\in\bd(K)$ for $\Vert\cdot\Vert_K$".}

We can assume, without loss of generality, that $\Vert f\Vert_\infty=f(0)=1$. Let us observe that if we have equality in Theorem \ref{SplittingColesanti}, then
we have equality in Lemma \ref{th:SplittColesLambda} for $\tilde{f}=S_Hf$.
This means that there exist $K_1\subset H$ and $K_2\subset H^\bot$ with $0\in K_1\cap K_2$, such that for every $(x,y)\in H\times H^\perp$, $\tilde{f}(x,y)=\exp(-\Vert x\Vert_{K_1}-\Vert y\Vert_{K_2})=e^{-\Vert (x,y)\Vert_L}$, where
$$
L=\conv(K_1, K_2).
$$
%In particular, let us observe that
%\[
%\begin{split}
%\{(x,y)\in\R^n:\Vert x\Vert_{K_1}+\Vert y\Vert_{K_2}\leq 1\} & = \{(x,y)\in\R^n:h_{K_1^\circ}(x)+h_{K_2^\circ}(y)\leq 1\}\\
%& =\{(x,y)\in\R^n:h_{K_1^\circ\times K_2^\circ}(x,y)\leq 1\} \\
%& = \{(x,y)\in\R^n:\Vert (x,y)\Vert_{(K_1^\circ\times K_2^\circ)^\circ}\leq 1\}\\
%& =\{(x,y)\in\R^n:\Vert (x,y)\Vert_{\conv(\{K_1\times\{0\}^{n-i},\{0\}^i\times K_2\})}\leq 1\}\\
%& =\conv(\{K_1\times\{0\}^{n-i},\{0\}^i\times K_2\}).
%\end{split}
%\]

If $u:H\times H^\perp\to[0,\infty]$ is such that $f=e^{-u}$ we have that $S_H(\epi(u))=\{(x,y,t):\Vert(x,y)\Vert_L\leq t\}$. Consequently, since for every $t_0\in[0,\infty)$ we have $\vol_n(\epi(u)\cap\{t=t_0\})=\vol_n(S_H(\epi(u)\cap\{t=t_0\}))=t_0^n\vol_n(L)$, by the equality cases in Brunn-Minkowski inequality, there exists a convex body $K\subseteq\R^n$ with $0\in K$ such that
$\epi(u)\cap\{t=t_0\}=t_0K$ and $L=S_H(K)$. Thus, $f(x,y)=e^{-\Vert(x,y)\Vert_K}$ for some $K$ with $0\in K$ and
%Letting $K\in\mathcal{K}^n$, $0\in K$, be such that
\[
\conv(K_1, K_2)=S_{H}(K).
\]
Consequently, the equality case in Theorem \ref{SplittingColesanti} becomes equality in \eqref{eq:volumesplit} and then for every $v\in H$ the intersection $K\cap (H^{\bot}+\R^+v)$ is the convex hull of $K\cap H^{\bot}$ and one point.

%in by Rogers-Shephard equality case for every $x\in H$,
%$K\cap(x+H^\bot)$ are homothetic to each other, "with a size proportional to the distance to $K\cap H^\bot$".
%Moreover, if we call $f(x)=e^{-u(x)}$, with $u$ convex, then for every $s\in[0,1]$
%\[
%s\,K\subseteq\{(x,y):u(x,y)\leq s\},
%\]
%and since
%\[
%S_{\lin\{H,e_{n+1}\}}(s\,K)=s\,\conv(\{K_1\times\{0\}^{n-i},\{0\}^i\times K_2\})
%\]
%hence $s\,K=\{(x,y):u(x,y)\leq s\}$, thus implying that $u(x,y)=\Vert (x,y)\Vert_K$ and hence
%$f(x,y)=\exp(-\Vert (x,y)\Vert_K)$.

%\subsection{Equality case in Theorem \ref{th:upperboundexponent}}

\subsubsection{Equality case in Lemma \ref{lem:MinUpperBound}}\label{subsub:lemaminupperbound}

Assume that there is equality in Lemma \ref{lem:MinUpperBound}.
We can assume without loss of generality that $\Vert f\Vert_\infty=f(0)$ and $\Vert g\Vert_\infty=g(0)$ and so $0\in K_t\cap L_t$ for every $t\in (0,1]$. For every $t\in(0,1]$ we have that
$$
\vol_{n+m}(C_t) ={n+m\choose n}^{-1}\vol_n(K_t)\vol_{m}(L_t).
$$
By the equality cases in Rogers-Shephard inequality \eqref{eq:volumesplit} this implies that for every $t\in(0,1]$ and every $(x,0)\in P_HC_t=K_t\times\{0\}$,
\begin{equation}\label{eq:KsAndLs}
L_{t\frac{\Vert f\Vert_\infty}{f(x)}}=y_x+(1-\Vert x\Vert_{K_t})L_t.
\end{equation}
for some $y_x\in\R^m$. Since this is true for every $t\in(0,1]$ and  every $x\in K_t$ we deduce that all the convex bodies $L_t$ are homothetic and there exists a function $g_1(t)$ and a convex body $L$ with $0\in L$ such that $L_t=y_t+g_1(t)L$. Notice also that, taking for any $t\in(0,1]$ some $x$ with $\Vert x\Vert_{K_t}=1$  we deduce that $L_{t\frac{\Vert f\Vert_\infty}{f(x)}}=\{y_x\}$ and so $y_x=0$ for every $x\in \inter(\supp f)$  and then $y_t=0$ for every $t\in(0,1]$. Furthermore, choosing some $t$ and $x$ such that $f(x)=t\Vert f\Vert_\infty$ we deduce that $L_1=\{0\}^m$.
Besides, since $g$ is a log-concave function, we have that for any $v_1,v_2\in[0,\infty)$ and any $\lambda\in[0,1]$
$$
L_{e^{-((1-\lambda)v_1+\lambda v_2)}}\supseteq (1-\lambda)L_{e^{-v_1}}+\lambda L_{e^{-v_2}}
$$
and then the function $G(v):=g_1(e^{-v})$ is concave and verifies that $G(0)=0$.

Now, for any $t\in(0,1]$, take $s\geq t$ and $\theta\in S^{n-1}$. Notice that, from \eqref{eq:KsAndLs} one can deduce that for any ray starting at 0, its intersection with $\supp f$ is either the whole ray or just $\{0\}$. Otherwise, fix some $x$ in the ray and take $t\to0$, which leads to a contradiction. Consequently $\Vert\theta\Vert_{K_s}$ is finite if and only if $\Vert\theta\Vert_{K_t}$ is finite and, in such case $f(\lambda\theta)\to0$ as $\lambda\to\infty$ and $\Vert\theta\Vert_{K_s}$ is strictly greater that $\Vert\theta\Vert_{K_t}$. Taking $x=\frac{\theta}{\Vert\theta\Vert_{K_s}}$ we have
$$
L_{\frac{t}{s}}=\left(1-\frac{\Vert \theta\Vert_{K_t}}{\Vert\theta\Vert_{K_s}}\right)L_t
$$
and then for every $\theta\in S^{n-1}$ and every $0<t\leq s\leq1$
$$
\Vert\theta\Vert_{K_t}=\left(1-\frac{g_1\left(\frac{t}{s}\right)}{g_1(t)}\right)\Vert\theta\Vert_{K_s}.
$$
Thus, there exists a function $f_1(t)$ and a convex body $K$ with $0\in K$ such that $K_t=f_1(t)K$ and, since $f$ is log-concave, the function $F(u)= f_1(e^{-u})$ is concave. Then, for any $0<t\leq s\leq1$
$$
\frac{f_1(s)}{f_1(t)}=\left(1-\frac{g_1\left(\frac{t}{s}\right)}{g_1(t)}\right),
$$
or equivalently, taking $t=e^{-v}$ and $s=e^{-u}$, for any $0\leq u<v<\infty$
$$
\frac{F(u)}{F(v)}+\frac{G(v-u)}{G(v)}=1.
$$
On the one hand, we deduce that
$$
\frac{F(v)}{G(v)}\frac{G(v-u)}{v-u}=\frac{F(v)-F(u)}{v-u},
$$
and, since $F$ is concave, taking $v-u$ constant we deduce that $\frac{F(v)}{G(v)}$ is non-increasing. On the other hand
$$
\frac{G(v)}{F(v)}\frac{F(u)}{u}=F(v)\frac{G(v)-G(v-u)}{u}
$$
and, since $G$ is concave, taking $u$ constant we deduce that $\frac{G(v)}{F(v)}$ is non-increasing and, since both $F$ and $G$ are positive functions, $\frac{F(v)}{G(v)}$ is non-decreasing and thus it is constant. Then $G=CF$, for some $C>0$, $F(0)=0$, and for every $0\leq u<v<\infty$
$$
F(u)+ F(v-u)=F(v),
$$
so $F(u)=au$ and $G(v)=bv$ for some positive constants $a,b$ and then $f_1(t)=-a\log t$ and $g_1(t)=-b\log t$. Consequently, for every $t\in (0,1]$ $K_t=(-\log t)(aK)$ and $L_t=(-\log t)(bL)$, which happens if and only if $\frac{f(x)}{\Vert f\Vert_\infty}=\exp(-\Vert x\Vert_{aK})$ and $\frac{g(y)}{\Vert f\Vert_\infty}=\exp(-\Vert y\Vert_{bL})$.

\subsubsection{Equality case in Theorem \ref{th:upperboundexponent}}\label{subsub:thmupperboundexponent}

In order to have equality, by the equality cases of Lemma \ref{lem:MinUpperBound}, it has to be $\frac{P_Hf(x)}{\Vert f\Vert_\infty}=\exp(-\Vert x-x_0\Vert_K)$, $\frac{P_{H^\bot}(y)}{\Vert f\Vert_\infty}=\exp(-\Vert y-y_0\Vert_L)$ for some $x_0\in H, y_0\in H^\perp$ and $K,L$ convex bodies in $H$ and $H^\perp$ respectively with the origin in their interiors. Hence
\begin{eqnarray*}
\frac{f(x,y)}{\Vert f\Vert_\infty}&=&\min\left\{\frac{P_Hf(x)}{\Vert f\Vert_\infty}, \frac{P_{H^\bot}(y)}{\Vert f\Vert_\infty}\right\}= \min\left\{e^{-\Vert x-x_0\Vert_K}, e^{-\Vert y-y_0\Vert_L}\right\}\cr
&=&e^{-\max\{\Vert x-x_0\Vert_K,\Vert y- y_0\Vert_L\}}=e^{-\Vert z-z_0\Vert_{K\times L}},
\end{eqnarray*}
where $z_0=(x_0,y_0)$.

\subsection{Equality cases of Subsection \ref{subsec:LWreverse}}\label{subsec:Equality in RSwithprojandintersection}

\subsubsection{Equality case of Lemma \ref{RSwithIntersection}}

Let us assume that there is equality for $K$  in \eqref{eq:revLWandRS}. First, since
\[
\|f\|_{\infty}\int_{(E\cap H)^{\bot}}f(z)dz = {n-k\choose n-i}^{-1}\int_{H^{\bot}}P_{H^{\bot}}f(x)dx\int_{E^{\bot}}f|_{E^{\bot}}(y)dy,
\]
by Theorem \ref{th:splittRS} we have that $f=\|f\|_{\infty}\chi_{P_{(E\cap H)^{\bot}}K}$,
and $L:=P_{(E\cap H)^{\bot}}K$ verifies that for every $v\in H^\bot$ then
$L\cap(E^\bot+\R^+v))$ is the convex hull of $L\cap E^\bot$ and one more point. %where
$\max_{x_0\in H^\bot}\vol(L\cap(x_0+E^\bot))=\vol(L\cap E^\bot)$.

Besides, since $f$ is a constant function on $L$, by the equality case of Brunn-Minkowski inequality,
$K\cap(x+E\cap H)$ is a translate of the same convex body $K_3\subseteq E\cap H$, for every $x\in L$.

Since we also have that
\begin{eqnarray*}
\vol_k(K_3)\vol_{n-j}(P_{H^\bot}L)&=&\int_{H^{\bot}}\max_{y\in E^\bot}\vol_k(K\cap(x+y+E\cap H))dx = \int_{H^{\bot}}\vol_k(K\cap(x+E\cap H))dx\cr
&=&\vol_k(K_3)\vol_{n-j}(L\cap H^\bot)
\end{eqnarray*}
%for every $x\in P_{H^\bot}K$ we have that
%$\max_{y_0\in E^\bot}\vol(K\cap(x+y_0+E\cap H))=\vol(K\cap(x+E\cap H))$. This means that
%$$
%P_{H^\bot}K=P_{H^\bot}P_{(E\cap H)^\bot}K=(P_{(E\cap H)^\bot}K)\cap H^\bot.
%$$
then $P_{H^\bot}K=P_{H^\bot}L=L\cap H^\bot$. Let us define $K_1:=L\cap E^\bot$ and $K_2:=P_{H^\bot}K$. Since $K_1$ and $K_2$
are contained in $L$, the convex hull is contained in $L$ as well, and since for every $v\in H^\bot$
$L\cap(E^\bot+\R^+v))$ is the convex hull of $L\cap K_1$ and one more point, then
$L=\conv(\{K_1,K_2\})$.

\subsubsection{Equality case of Theorem \ref{th:localLoomisWhitneyReverse}}

In order to have equality in Theorem \ref{th:localLoomisWhitneyReverse} for some $K$,
then $\widetilde{K}=S_EK$ must attain equality in Lemma \ref{RSwithIntersection}.
Let us fix $x\in P_{H^\bot}\widetilde{K}=P_{H^\bot}K$. Then we have that for every $y\in E^\bot$ such that $x+y\in P_{(E\cap H)^\bot}\widetilde{K}$,
$\widetilde{K}\cap(x+y+E\cap H)$ is a translate of the same convex body $K_1\subset E\cap H$.
Since $\widetilde{K}$ is symmetric with respect to $E$, we actually have that for every $z\in E\cap H$ such that $x+z\in P_E\widetilde{K}=P_EK$.
$\widetilde{K}\cap(x+z+E^\bot)$ is a translate of the same convex body. Therefore,
$\vol_{n-i}(K\cap(x+z+E^\bot))=\vol_{n-i}(\widetilde{K}\cap(x+z+E^\bot))$ is constant in its support and we also have that for a fixed $x\in P_{H^\bot}K$,
$K\cap(x+z+E^\bot)$ is a translate of the same body $K_{2,x}$ for every $z\in E\cap H$ such that $x+z\in P_EK$.

Let $L:=P_{(E\cap H)^\bot}\widetilde{K}$. Equality in Lemma \ref{RSwithIntersection} also ensures that $L=\textrm{conv}(\{L\cap E^\bot, P_{H^\bot}L\})$ and then for every $x\in P_{H^\bot}K=P_{H^\bot} L$
$$
\vol_{n-i}(L\cap (x+E^\bot))=(1-\Vert x\Vert_{P_{H^\bot}L})^{n-i}\vol_{n-i}(L\cap E^\bot).
$$
Since $\vol_{n-i}(L\cap (x+E^\bot))=\vol_{n-i}(K_{2,x})$ and $K_3:=P_{H^\bot}L=P_{H^\bot}K$ we have that
$$
\vol_{n-i}(K_{2,x})=(1-\Vert x\Vert_{K_3})^{n-i}\vol_{n-i}(K_{2,0}).
$$
Therefore, since $\vol_{j}(K\cap(x+H))=\vol_j(\widetilde{K}\cap(x+H))=\vol_{n-i}(K_{2,x})\vol_k(K_1)$, we have that for any $x\in K_3$
$$
\vol_i(K\cap(x+H))=(1-\Vert x\Vert_{K_3})^{n-i}\vol_i(K\cap H).
$$

Notice that for every $x_0\in K$ with $\Vert x_0\Vert_{K_3}=1$ we have that $C:=\textrm{conv}\{{K\cap H,\{x_0\}}\}\subseteq K$ and that for any $x=\lambda P_{H^\bot}x_0$ with $0\leq\lambda\leq 1$
$$
\vol_i(C\cap(x+H))=(1-\Vert x\Vert_{K_3})^{n-i}\vol_i(C\cap H)=\vol_i(K\cap(x+H)).
$$

Therefore $C\cap(x+H)=K\cap(x+H)$ and then for every $v\in H^\perp$, $K\cap (H+\R^+v)$ is the convex hull of $K\cap H$ and one point, where $K\cap H$ verifies that $S_E(K\cap H)=K_{2,0}\times K_1$

\subsection{Equality case of Theorem \ref{th:localLoomisWhitney}}\label{subsec:equality in directLW}

Let us assume that there is equality in \eqref{dirLocLW}. Equality in \eqref{LocalLWeq2} implies that
$f_1$ and $f_2$ attain their maximum at the same point $x_0\in P_{E\cap H}K$. Let
\begin{itemize}
	\item $K_1:=P_EK\cap(x_0+H^\bot)-x_0\subseteq H^\bot$,
	\item $K_2:=P_HK\cap(x_0+E^\bot)-x_0\subseteq E^\bot$ and
	\item $K_3:=P_{E\cap H}K$.
\end{itemize}
Besides, for every $x^*\in\partial K_3$ and $\lambda\in[0,1]$ then $f_i((1-\lambda)x_0+\lambda x^*)=(1-\lambda)f_i(x_0)$, $i=1,2$.

By the equality cases in Brunn-Minkowski inequality, for every $x^*\in\partial K_3$ we have that
$P_EK\cap(((1-\lambda)x_0+\lambda x^*)+H^\bot)$ and $P_HK\cap(((1-\lambda)x_0+\lambda x^*)+E^\bot)$ are
translates of $(1-\lambda)K_1$ and $(1-\lambda)K_2$ respectively.

Equality in \eqref{LocalLWeq1} implies that for every $x\in P_{E\cap H}K$
$$
K\cap(x+(E\cap H)^\bot)=(P_EK\cap(x+H^\bot))+(P_HK\cap(x+E^\bot)).
$$
Consequently, for every $x^*\in\partial K_3$
\[
K\cap(((1-\lambda)x_0+\lambda x^*)+(E\cap H)^\bot)
\]
is a translate of $(1-\lambda)(K_1+K_2)$. Hence we can conclude that
for every $v\in E\cap H$ we have that $K\cap(x_0+(E\cap H)^\bot)+\R^+v$ is the convex hull of $x_0+(K_1+K_2)$ and a unique point.

\subsection{Equality in Theorem \ref{thm:RSFuncconvexhullbest}} \label{subsub:equalRSFuncConvHullBest}

Let us now suppose that we have equality in \eqref{eq:RSFuncconvexhullbest}.
This means, in particular, that there exists $C\in\mathcal K^{2n+1}$ such that
\[
\frac{f\left(\frac{z_1}{1-t}\right)^{1-t}g\left(\frac{z_2}{t}\right)^t}{\max\{\Vert f\Vert_\infty,\Vert g\Vert_\infty\}}=
\frac{\Psi(z_1,z_2,t)}{\Vert \Psi\Vert_\infty} =\chi_C(z_1,z_2,t).
\]
Hence there exist $K,L\in\mathcal K^n$ such that $\frac{f}{\Vert f\Vert_\infty}=\chi_K$
and $\frac{g}{\Vert g\Vert\infty}=\chi_L$. Moreover, since for every $t\in[0,1]$
\[
1=\frac{\Vert f\Vert_\infty^t\Vert g\Vert_\infty^{1-t}}{\max\{\Vert f\Vert_\infty,\Vert g\Vert_\infty\}}
\]
then $\Vert f\Vert_\infty=\Vert g\Vert_\infty$
Replacing $f$ and $g$ by these characteristic functions, we get that
\[
\int_0^1t^n(1-t)^n\int f(x)^{1-t}dx\int g(y)^tdy=\frac{\Vert f\Vert_\infty}{(n+1){2n+1\choose n}}\vol_n(K)\vol_n(L).
\]
Moreover, we have that $\chi_K\tilde{\star}(\chi_L)_-(x)=\chi_M(x)$,
where $M$ is given by
\[
M=\{z\in\R^n:z=(1-\lambda)x+\lambda y,\,x\in K,\,y\in -L,\,\lambda\in[0,1]\}=\conv(K\cup (-L)),
\]
and that
\[
\sup_{0\leq s\leq 1}\chi_K\left(\frac{x}{1-s}\right)^{1-s}\chi_L\left(\frac{x}{s}\right)^s=\chi_N(x),
\]
where
\[
N=\bigcup_{0\leq s\leq 1}(((1-s)K)\cap(s L))=(K^\circ+L^\circ)^\circ.
\]
Therefore equality \eqref{eq:RSFuncconvexhullbest} rewrites as
\[
\vol_n(\conv(K\cup (-L)))\vol_n((K^\circ+L^\circ)^\circ)=\frac{(2n+1){2n\choose n}}{(n+1){2n+1\choose n}}\vol_n(K)\vol_n(L)=\vol_n(K)\vol_n(L).
\]
which, by the equality case of \eqref{eq:RoShpolar}, holds if and only if $K$ and $L$ are simplices with a common vertex at the origin and such that the $n$ facets of $K$ and $-L$ containing the origin are contained in the same set of $n$ hyperplanes.

\subsection{Equality of Theorem \ref{thm:RSFunc(n+1)} }\label{subsec:equality in rsfunc(n+1)}

Let us assume that there is equality in \eqref{eq:RSFunc(n+1)}.
Then $F$ attains equality in Theorem \ref{th:splittRS}, hence there exists $C\in\mathcal{K}^{2n+1}$
such that $F=\Vert F\Vert_\infty\chi_C$. Since for every $(z_1,z_2,t)\in C$
\[
1=\frac{F(z_1,z_2,t)}{\Vert F\Vert_\infty}=\frac{f(\frac{z_1}{t})^tg(\frac{z_2}{1-t})^{1-t}}{\Vert F\Vert_\infty},
\]
then there exist $K,L\in\mathcal K^n$ such that $\frac{f}{\Vert f\Vert_\infty}=\chi_K$
and $\frac{g}{\Vert g\Vert\infty}=\chi_L$. Moreover, since for every $t\in[0,1]$
\[
1=\frac{\Vert f\Vert_\infty^t\Vert g\Vert_\infty^{1-t}}{\max\{\Vert f\Vert_\infty,\Vert g\Vert_\infty\}}
\]
then $\Vert f\Vert_\infty=\Vert g\Vert_\infty=A$. Now,
\[
\frac{f\otimes g(z_2,t)}{A}=\sup_{z_1\in\R^n}\chi_K\left(\frac{z_1}{t}\right)\chi_L\left(\frac{z_2-z_1}{1-t}\right)=1
\]
occurs if and only if there exists $z_1\in\R^n$ s.t.~$z_1\in tK$ and $z_2\in tK+(1-t)L$, which means that
$\frac{f\otimes g(z_2,t)}{A}=\chi_{tK+(1-t)L}(z_2)$, and thus
\begin{equation*}
\begin{split}
&\int_{\R^n\times[0,1]} \frac{f\otimes g(z_2,t)}{A}dz_2dt=\int_0^1\int_{\R^n}\chi_{tK+(1-t)L}(z_2)dz_2dt\\
&=\int_0^1\vol_n(tK+(1-t)L)dt=\vol_{n+1}(\conv\{K\times\{1\},L\times\{0\}\}).
\end{split}
\end{equation*}
Second, $\frac{\sqrt{f(x)g(-x)}}{A}=\chi_K(x)\chi_{-L}(x)=\chi_{K\cap(-L)}(x)$.
Thus
\[
\int_{\R^n} \frac{\sqrt{f(x)g(-x)}}{A}dx=\int_{\R^n} \chi_{K\cap(-L)}(x)dx=\vol_n(K\cap(-L)).
\]
Since we also get that
\begin{eqnarray*}
\int_0^1t^n(1-t)^n\int_{\R^n}\left(\frac{f(x)}{A}\right)^tdx\int_{\R^n}\left(\frac{g(y)}{A}\right)^{1-t}dydt&=&\int_0^1t^n(1-t)^ndt\,\vol(K)\,\vol(L)\cr
&=&\frac{{2n+1\choose n}^{-1}}{n+1}\vol_n(K)\,\vol_n(L),
\end{eqnarray*}
altogether shows that equality in \eqref{eq:RSFunc(n+1)} becomes an equality in \eqref{eq:RoShHyperplanes}, hence
concluding that $K=-L$ is an $n$-dimensional simplex.

\section{Appendix: Berwald's inequality}\label{sec:appendixBerwald}

%For the sake of completeness, we write here a direct proof of Theorem \ref{berwald},   Theorem 8 in \cite{Ber}.

As it was said above, this appendix is devoted to present a comprehensive self-contained proof of \cite[Satz 8]{Ber}, so far and to the best of our knowledge, not yet found in English. We try to keep the original ideas and notations as accurate as possible to the ones of Berwald.

Let $K\in\mathcal K^n$. For $\hat M>0$ and $x_0\in K$,
the roof function on $K$ with height $\hat M$ over $x_0\in K$ is a function $\hat f_{\hat M}(\cdot;x_0):K\to[0,+\infty)$
such that the graph of $\hat f=\hat f_{\hat M}(\cdot;x_0)$ in $\R^{n+1}$ is a hypercone with basis $K$ and height $\hat M$, such that the projection of the vertex is $x_0\in K$.

In other words,
$$
\{(x,t)\in K\times \R: 0\le t\le \hat f(x)\}
=
\text{conv}(K\times \{0\},\{(x_0,\hat M)\}).
$$
Also, for $0\le t\le \hat M$,
$$
\{x\in K: \hat f(x)\ge t\}=\tfrac{t}{\hat M}x_0+(1-\tfrac{t}{\hat M})K.
$$

\begin{thm}\label{berwald}
Let $K\in\mathcal K^n$, $f_1,\dots f_m:K\to[0,+\infty)$ concave, continuous, and non identically null functions, and $\alpha_1,\dots, \alpha_m>0$. Then
$$
\frac{1}{\vol_n(K)}\int_K \prod_{i=1}^m f_i(x)^{\alpha_i}\,dx
\le
\frac{{\alpha_1+n\choose n}\cdots {\alpha_m+n\choose n}}
{{\alpha_1+\cdots +\alpha_m+n\choose n}}
\prod_{i=1}^m \frac{1}{\vol_n(K)}\int_Kf_k(x)^{\alpha_k}\,dx.
$$
Equality holds for $m>1$ if and only if all the $f_i$'s are roof functions over the same point in $K$.
\end{thm}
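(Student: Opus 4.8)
The plan is to transcribe, step by step, the scheme of Section \ref{sec:FunctionalWeigthedBerwald}, with Lebesgue measure on $K$ in place of the weighted measure $e^{-t}\,dx\,dt$ on $L$ and Brunn--Minkowski in place of Pr\'ekopa--Leindler. The only genuinely new ingredient over that section is the identification of the extremal profiles: they turn out to be exactly the roof functions $\hat f_{\hat M}(\cdot;x_0)$, whose level sets $\{x:\hat f_{\hat M}(x;x_0)\ge s\}$ are the homothets $\tfrac{s}{\hat M}x_0+(1-\tfrac{s}{\hat M})K$.

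\textbf{Step 1: a one-parameter monotonicity} (the analogue of Lemma \ref{lem:BerwaldExponentialDensity}). For continuous concave not identically null $h\colon K\to[0,\infty)$ I would set
\[
\Phi_\gamma(h):=\left(\frac{\binom{\gamma+n}{n}}{\vol_n(K)}\int_K h(x)^\gamma\,dx\right)^{1/\gamma},\qquad\gamma>0,
\]
together with $I_h(s):=\vol_n(\{x\in K:h(x)\ge s\})$. Concavity of $h$ gives the Minkowski inclusion $\{h\ge s_\theta\}\supseteq(1-\theta)\{h\ge s_0\}+\theta\{h\ge s_1\}$, so by Brunn--Minkowski $I_h^{1/n}$ is concave (hence $I_h$ is non-increasing, with $I_h(0)=\vol_n(K)$), and layer-cake integration gives $\Phi_\gamma(h)^\gamma=\frac{\binom{\gamma+n}{n}}{\vol_n(K)}\int_0^\infty I_h(s)\gamma s^{\gamma-1}\,ds$. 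Fixing $0<\gamma_1<\gamma_2$, I would compare $h$ against the roof function $\bar h$ of height $\bar M:=\Phi_{\gamma_1}(h)$ over any fixed $x_0\in K$: then $I_{\bar h}(s)=(1-s/\bar M)^n\vol_n(K)$ for $0\le s\le\bar M$, and the Beta-integral identity $\gamma B(\gamma,n+1)=\binom{\gamma+n}{n}^{-1}$ shows $\Phi_\gamma(\bar h)=\bar M$ for \emph{every} $\gamma$ (in particular $\bar M\ge\max_K h$). The normalization $\bar M=\Phi_{\gamma_1}(h)$ means $\int_0^\infty(I_h-I_{\bar h})\gamma_1 s^{\gamma_1-1}\,ds=0$; since $I_h^{1/n}$ is concave and $I_{\bar h}^{1/n}$ affine with equal value at $s=0$, the difference $I_h-I_{\bar h}$ changes sign at most once, from $\ge0$ to $\le0$, at some $s_0$. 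Writing $s^{\gamma_2-1}=s^{\gamma_1-1}\cdot s^{\gamma_2-\gamma_1}$ and bounding $s^{\gamma_2-\gamma_1}\le s_0^{\gamma_2-\gamma_1}$ where the integrand is $\ge0$ and $s^{\gamma_2-\gamma_1}\ge s_0^{\gamma_2-\gamma_1}$ where it is $\le0$ yields $\int_0^\infty(I_h-I_{\bar h})\gamma_2 s^{\gamma_2-1}\,ds\le0$, hence $\Phi_{\gamma_2}(h)\le\Phi_{\gamma_2}(\bar h)=\Phi_{\gamma_1}(\bar h)=\Phi_{\gamma_1}(h)$. So $\gamma\mapsto\Phi_\gamma(h)$ is non-increasing.

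\textbf{Step 2: assembly} (as in Lemma \ref{lem:ProductsIntegrals} and the proof of Theorem \ref{thm:BerwaldExponential}). After rescaling $f_i\rightsquigarrow\lambda_i f_i$ I may assume $\frac{1}{\vol_n(K)}\int_K f_i^\sigma=1$ for each $i$; applying \eqref{alfabetanum} pointwise with $\beta_i=f_i(x)$ and integrating over $K$ gives $\frac{1}{\vol_n(K)}\int_K\prod_i f_i^{\alpha_i}\le1$, i.e.\ in general $\frac{1}{\vol_n(K)}\int_K\prod_i f_i^{\alpha_i}\le\prod_i\bigl(\frac{1}{\vol_n(K)}\int_K f_i^\sigma\bigr)^{\alpha_i/\sigma}$. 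On the other hand Step 1 with $\gamma_1=\alpha_i$, $\gamma_2=\sigma$ reads $\Phi_\sigma(f_i)\le\Phi_{\alpha_i}(f_i)$, i.e.\ $\binom{\sigma+n}{n}^{\alpha_i/\sigma}\bigl(\frac{1}{\vol_n(K)}\int_K f_i^\sigma\bigr)^{\alpha_i/\sigma}\le\binom{\alpha_i+n}{n}\frac{1}{\vol_n(K)}\int_K f_i^{\alpha_i}$; multiplying over $i$ (using $\sum_i\alpha_i/\sigma=1$) and combining with the previous bound gives exactly the claimed inequality.

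\textbf{The main obstacle: the equality case.} The hard part is tracking equality. Equality in \eqref{alfabetanum} for $m>1$ forces $f_1,\dots,f_m$ to be pairwise proportional, say $f_i=c_i f$; since each $\alpha_i<\sigma$, equality then requires $\Phi_{\alpha_i}(f)=\Phi_\sigma(f)$, and re-examining the single-crossing estimate in Step 1 forces $(I_f-I_{\bar h})(s^{\sigma-\alpha_i}-s_0^{\sigma-\alpha_i})\equiv0$, hence $I_f\equiv I_{\bar h}$, i.e.\ $\vol_n(\{f\ge s\})=(1-s/\bar M)^n\vol_n(K)$ for $0\le s\le\bar M$. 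Feeding $s_0=0$ into $\{f\ge s_\theta\}\supseteq(1-\theta)\{f\ge 0\}+\theta\{f\ge s_1\}$ turns it into equality of volumes, so by the equality case of Brunn--Minkowski every $\{f\ge s\}$ is a homothet of $K$ with ratio $1-s/\bar M$; the nested family contracts to a single point $x_0$ as $s\uparrow\bar M$, whence $f=\hat f_{\bar M}(\cdot;x_0)$ and each $f_i=c_i f$ is a roof function over that common $x_0$. Conversely, if all $f_i$ are roof functions over a common point, then $\Phi_\gamma(f_i)$ is height-independent, so both inequalities in Step 2 are equalities; thus the condition is sufficient too.
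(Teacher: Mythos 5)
Your proposal is correct and follows essentially the same route as the paper's Appendix: Step 1 reproduces Theorem \ref{nvariable} (the non-increasing nature of $\Phi_\gamma$ via comparison with a roof function of equal $\Phi_{\gamma_1}$-value and a single-crossing argument), Step 2 reproduces the proposition based on \eqref{alfabetanum} and the final assembly, and the equality discussion tracks rigidity through both inequalities via Brunn--Minkowski exactly as the paper does. The only cosmetic difference is that you phrase the level-set computation with $\gamma s^{\gamma-1}\,ds$ where the paper writes $d(t^\alpha)$ and integrates by parts, which is the same thing.
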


We need to state several results translated from \cite{Ber} (written in \emph{old german})
before giving a proof of Theorem \ref{berwald}.

\begin{thm} \label{nvariable}
Let $0<\alfa_1<\alfa_2$, $K\in\mathcal K^n$, and $f:K\to[0,+\infty)$ concave, continuous, and not identically null. Then
\begin{equation}\label{decreasing2}
\left(\frac{{\alfa_2+n\choose n}}{\vol_n(K)}\int_K f(x)^{\alfa_2}\,dx\right)^{1/\alfa_2}
\le
\left(\frac{{\alfa_1+n\choose n}}{\vol_n(K)}\int_K f(x)^{\alfa_1}\,dx\right)^{1/\alfa_1}.
\end{equation}
Equality holds if and only if $f$ is a roof function over a point in $K$.
\end{thm}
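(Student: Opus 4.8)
The plan is to run, in the (unweighted) Lebesgue setting, exactly the scheme used for Lemma~\ref{lem:BerwaldExponentialDensity}, with Brunn's concavity principle in place of the log-concavity of the distribution function and with \emph{roof functions} playing the role of the auxiliary function $\bar h$. For $\alpha>0$ write
\[
\Psi_\alpha(f):=\left(\frac{\binom{\alpha+n}{n}}{\vol_n(K)}\int_K f(x)^{\alpha}\,dx\right)^{1/\alpha},
\]
so that \eqref{decreasing2} says precisely that $\alpha\mapsto\Psi_\alpha(f)$ is non-increasing. Put $I_f(s):=\vol_n(\{x\in K:f(x)\ge s\})$, so that $\int_K f^\alpha=\int_0^\infty \alpha s^{\alpha-1}I_f(s)\,ds$ by the layer-cake formula; the concavity of $f$ gives $\{f\ge s_\theta\}\supseteq(1-\theta)\{f\ge s_0\}+\theta\{f\ge s_1\}$, so Brunn--Minkowski shows that $s\mapsto I_f(s)^{1/n}$ is concave on $[0,\max_{x\in K}f(x)]$. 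A direct Beta-integral computation shows that for the roof function $\hat f_M$ of height $M$ over any $x_0\in K$ one has $I_{\hat f_M}(s)=(1-s/M)^n\vol_n(K)$ for $0\le s\le M$, hence $\Psi_\alpha(\hat f_M)=M$ for \emph{every} $\alpha>0$.

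Now fix $0<\alpha_1<\alpha_2$, set $M:=\Psi_{\alpha_1}(f)$, and let $\hat f:=\hat f_M(\cdot;x_0)$ for an arbitrary $x_0\in K$. Then $\Psi_{\alpha_1}(\hat f)=M=\Psi_{\alpha_1}(f)$, i.e. $\int_0^\infty\alpha_1 s^{\alpha_1-1}\bigl(I_f(s)-I_{\hat f}(s)\bigr)\,ds=0$. Since $I_f^{1/n}$ is concave, $I_{\hat f}^{1/n}$ is affine, and $I_f(0)=\vol_n(K)=I_{\hat f}(0)$, the difference $I_f^{1/n}-I_{\hat f}^{1/n}$ is concave and vanishes at $0$; the elementary fact about concave functions vanishing at a boundary point (in the spirit of the crossing argument in the proof of Lemma~\ref{lem:BerwaldExponentialDensity}) yields a threshold $s_0\ge0$ with $I_f\ge I_{\hat f}$ on $[0,s_0]$ and $I_f\le I_{\hat f}$ on $[s_0,\infty)$. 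Splitting the vanishing integral at $s_0$ gives $A:=\int_0^{s_0}\alpha_1 s^{\alpha_1-1}(I_f-I_{\hat f})\,ds=\int_{s_0}^\infty\alpha_1 s^{\alpha_1-1}(I_{\hat f}-I_f)\,ds=:B$ with $A,B\ge0$. Writing $\alpha_2 s^{\alpha_2-1}=\tfrac{\alpha_2}{\alpha_1}s^{\alpha_2-\alpha_1}\cdot\alpha_1 s^{\alpha_1-1}$ and bounding $s^{\alpha_2-\alpha_1}\le s_0^{\alpha_2-\alpha_1}$ on $[0,s_0]$, $s^{\alpha_2-\alpha_1}\ge s_0^{\alpha_2-\alpha_1}$ on $[s_0,\infty)$ against these non-negative integrands, one gets $\int_0^\infty\alpha_2 s^{\alpha_2-1}(I_f-I_{\hat f})\,ds\le\tfrac{\alpha_2}{\alpha_1}s_0^{\alpha_2-\alpha_1}(A-B)=0$. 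Hence $\Psi_{\alpha_2}(f)^{\alpha_2}\le\Psi_{\alpha_2}(\hat f)^{\alpha_2}=M^{\alpha_2}=\Psi_{\alpha_1}(f)^{\alpha_2}$, which is \eqref{decreasing2}.

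For equality: $\alpha_2>\alpha_1$ makes $s\mapsto s^{\alpha_2-\alpha_1}$ strictly increasing, so equality in \eqref{decreasing2} forces $I_f\equiv I_{\hat f}$. In particular $I_f^{1/n}$ is affine, which puts us in the equality case of Brunn--Minkowski along the level sets: every inclusion and volume comparison above becomes an equality, so $\{f\ge s_\theta\}=(1-\theta)\{f\ge s_0\}+\theta\{f\ge s_1\}$ and all level sets $\{f\ge s\}$ with $0\le s<M$ are homothetic convex bodies. Writing $\{f\ge s\}=c_s+(1-s/M)K$, Minkowski additivity forces $s\mapsto c_s$ to be affine; since $\{f\ge0\}=K$ we get $c_0=0$, and letting $x_0$ be the point to which $\{f\ge s\}$ shrinks as $s\uparrow M$ one obtains $\{f\ge s\}=(1-s/M)K+(s/M)x_0$ for $0\le s\le M$, i.e. $f=\hat f_M(\cdot;x_0)$ is a roof function over $x_0\in K$. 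Conversely, the identity $\Psi_\alpha(\hat f_M)\equiv M$ shows that any roof function attains equality. The routine part is the inequality, which merely transcribes Lemma~\ref{lem:BerwaldExponentialDensity}; the delicate point I expect to be the equality discussion, namely upgrading $I_f\equiv I_{\hat f}$ to the pointwise identification of $f$ as a roof function via the equality case of the Brunn--Minkowski inequality.
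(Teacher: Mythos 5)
Your proposal is correct and follows essentially the same approach as the paper's proof: the normalized moment functional $\Psi_\alpha$, the comparison with the distribution function of a roof function calibrated so that $\Psi_{\alpha_1}$ agrees, the single sign change of $I_f-I_{\hat f}$ via concavity of $I_f^{1/n}$ and the affine comparison, and the weighted splitting at the crossing point. The equality discussion via the Brunn--Minkowski equality case also matches the paper, just with a bit more detail on why homothety of the level sets yields a roof function.
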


\begin{proof}
Let $M$ be the maximum of $f$ on $K$. For $t\ge0$, let
$$
V_f(t)=\vol_n(K_t)
$$
where
$$
K_t=\{
x\in K: f(x)\ge t
\}.
$$
The function $V_f$  is continuous on $[0,M]$, non-negative, non-increasing, $V_f(0)=\vol(K)$ and  for $t>M$, $V_f(t)=0$.
The concavity of $f$ and Brunn-Minkowski inequality show that $V_f^{1/n}$ is a concave function on $[0,M]$.

For $\alfa>0$, let
$$
\Phi_\alfa(f)=\left(\frac{{\alfa+n\choose n}}{\vol(K)}\int_{B_f} dx\,d(t^\alfa)\right)^{1/\alfa}
$$
where
\begin{equation}
\label{setB}
B_f=\{(x,t)\in K\times \R: 0\le t\le f(x)\}
\end{equation}
is a convex set in $\R^{n+1}$.
We compute the $(n+1)$-dimensional integral  in two  ways:
\begin{equation}\label{nequality1}
\Phi_\alfa(f)^\alfa=\frac{{\alfa+n\choose n}}{\vol(K)}\int_K f(x)^\alfa\,dx
=
\frac{{\alfa+n\choose n}}{\vol(K)}\int_0^M V_f(t)\,d(t^\alfa).
\end{equation}
In particular, the roof function $\hat f=\hat f_{\hat M}(\cdot;x_0)$ with height $\hat M>0$ (arbitrary for the moment) over any point $x_0\in K$ has the same function:
\begin{equation}\label{vroof}
V_{\hat f} (t)=\left(1-\frac{t}{\hat M}\right)^n\vol(K).
\end{equation}
Integrating by parts,
\begin{equation}\label{varphi}
\begin{aligned}
\Phi_\alfa(\hat f)^\alfa=
\frac{{\alfa+n\choose n}}{\vol(K)}\int_K \hat f(x)^\alfa\,dx
&=
\frac{{\alfa+n\choose n}}{\vol(K)}\int_0^{\hat M} V_{\hat f}(t)\,d(t^\alfa)
\\
&=\frac{n{\alfa+n\choose n}}{\hat M^n}\int_0^{\hat M}t^\alfa(\hat M-t)^{n-1}\,dt=
\hat M^\alfa.
\end{aligned}
\end{equation}
Take $\hat M=\Phi_{\alfa_1}(f)>0$.
From $\Phi_{\alfa_1}(\hat f)=\Phi_{\alfa_1}(f)$, we get
\begin{equation}\label{Gzero}
\int_0^{\hat M} V_{\hat f}(t)\,d(t^{\alfa_1})=\int_0^{ M} V_f(t)\,d(t^{\alfa_1}).
\end{equation}

Suppose $f$ is not a roof function. Then $\hat M>M$. Indeed, assume $\hat M\le M$. The convexity of $B_f$ implies $B_{\hat f}\subset B_f$ (strict inclusion). Then we have $\Phi_{\alfa_1}(\hat f)<\Phi_{\alfa_1}(f)$, which is a contradiction.

Since $V_f^{1/n}$ is concave on $[0,M]$, and $V_f(0)^{1/n}=\vol(K)^{1/n}$, $V_f(t)^{1/n}=0$ for $t>M$, the functions
$$
V_{\hat f}(t)^{1/n}=(1-\frac{t}{\hat M})\vol(K)^{1/n}
$$
and $V_f^{1/n}$ switch in just one point $t_0\in [0,M]$, i.e.,
$$
V_f(t)- V_{\hat f}(t)
\begin{cases}
>0 &\text{if } 0<t<t_0\\<0 &\text{if } t_0<t<\hat M.
\end{cases}
$$

So, using (\ref{Gzero}), and extending both integrals to $[0,\hat M]$ (since $V(t)=0$ for $t>M$),
$$
0=
\int_0^{\hat M}( V_f(t)- V_{\hat f}(t))\,d(t^{\alfa_1})
=
\int_0^{t_0} ( V_f(t)- V_{\hat f}(t))\,d(t^{\alfa_1})-\int_{t_0}^{\hat M} ( V_{\hat f}(t)- V_f(t))\,d(t^{\alfa_1})
$$
so
\begin{equation}\label{equality}
\int_0^{t_0} ( V_f(t)- V_{\hat f}(t))\,d(t^{\alfa_1})
=
\int_{t_0}^{\hat M} ( V_{\hat f}(t)- V_f(t))\,d(t^{\alfa_1}).
\end{equation}
Now take $\alfa=\alfa_2$ in (\ref{nequality1}),
$$
\Phi_{\alfa_2}(f)^{\alfa_2}-\Phi_{\alfa_2}(\hat f)^{\alfa_2}
=\frac{{\alfa_2+n\choose n}}{\vol(K)}\left[
\int_0^M V_f(t)\,d(t^{\alfa_2})-\int_0^{\hat M}  V_{\hat f}(t)\,d(t^{\alfa_2})\right]
$$
and again extending the interval of integration to $[0,\hat M]$,
$$
=
\frac{{\alfa_2+n\choose n}}{\vol(K)}\int_0^{\hat M}(V_f(t)- V_{\hat f}(t))\,d(t^{\alfa_2})
$$
$$
=\frac{{\alfa_2+n\choose n}}{\vol(K)}
\left[\int_0^{t_0} (V_f(t)- V_{\hat f}(t))\,d(t^{\alfa_2})-\int_{t_0}^{\hat M} (V_{\hat f}(t)- V_f(t))\,d(t^{\alfa_2})\right]
$$
Using $d(t^{\alfa_2})=\frac{\alfa_2}{\alfa_1}t^{\alfa_2-\alfa_1}d(t^{\alfa_1})$ and (\ref{equality}),
$$
<\frac{{\alfa_2+n\choose n}}{\vol(K)}\frac{\alfa_2}{\alfa_1}t_0^{\alfa_2-\alfa_1}
\left[\int_0^{t_0} (V_f(t)- V_{\hat f}(t))\,d(t^{\alfa_1})-\int_{t_0}^{\hat M} (V_{\hat f}(t)- V_f(t))\,d(t^{\alfa_1})\right]=0
$$
so $\Phi_{\alfa_2}(f)<\Phi_{\alfa_2}(\hat f)=\hat M$, and since $\hat M$ has been chosen so that $\hat M=\Phi_{\alfa_1}( f)$, (\ref{decreasing2}) is proved.

If we have equality in \eqref{decreasing2}, we then have that $V_f(t)=V_{\hat f}(t)$. This means in particular that
\[
V_f(t)^{\frac1n}=V_{\hat f}(t)^\frac1n=\left(1-\frac{t}{\hat M}\right)\vol(K)^\frac1n,
\]
which by Brunn-Minkowski equality case implies that
\[
\{(x,t)\in K\times \R: 0\le t\le f(x)\}
=
\textrm{conv}(K\times \{0\},\{(x_1,\hat M)\})
\]
for some $x_1\in\R^n$.
%In the case $f$ is a roof function with height $M$, then (\ref{varphi}) implies $M=\hat M$, and then $V_f= V_{\hat f}$, $f=\hat f$, and equality holds in (\ref{decreasing2}).
\end{proof}

From (\ref{decreasing2}), we  derive Theorem \ref{berwald}, valid for  a finite number of positive concave functions.

%The following lemma is well known  (see \cite{rogers}).

%\begin{lemma}
%Let $a_1,\dots a_m>0$, $b_1,\dots b_m>0$.
%$$
%(b_1^{a_1}b_2^{a_2}\cdots b_m^{a_m})^{\frac{1}{a_1+\cdots +a_m}}
%\le
%\frac{a_1b_1+a_2b_2+\cdots a_mb_m}{a_1+a_2+\cdots+a_m}
%$$
%with equality for $m>1$ iff $b_1=\cdots=b_m$.

%As a consequence, let $\alpha_1,\dots\alpha_m>0$, $\beta_1,\dots \beta_m>0$ and $\sigma={\alpha_1+\alpha_2+\cdots+\alpha_m}$. Then
%\begin{equation}\label{alfabetanum}
%\beta_1^{\alpha_1}\beta_2^{\alpha_2}\cdots \beta_m^{\alpha_m}
%\le
%\frac{\alpha_1\beta_1^\sigma+\alpha_2\beta_2^\sigma+\cdots \alpha_m\beta_m^\sigma}{\sigma}
%\end{equation}
%with equality for $m>1$ iff $\beta_1=\cdots=\beta_m$.
%\end{lemma}

%\begin{proof}
%Replacing $a_i$ by $a\cdot a_i$ and $b_i$ by $b\cdot b_i$ (the inequality above does not change), we may assume that
% $\sum_{i=1}^m a_i=\sum_{i=1}^m a_ib_i=1$. Then the inequality becomes
% $$
% b_1^{a_1}b_2^{a_2}\cdots b_m^{a_m}\le1
% $$
%and taking logarithm, $\sum_{i=1}^m a_i\log b_i\le0$. This can be obtained, under the normalization above, using the inequality $\log t\le t-1$, with equality if and only if $t=1$.

%The second inequality can be obtained from the first one taking $a_i=\alpha_i$ and $b_i=\beta_i^{\sigma}$.
%\end{proof}

We will use (\ref{alfabetanum}) to derive an extension of a theorem from \cite{jensen}. There, the $f_i$'s were supposed to be 1-variable integrable functions, and the normalization $\sum_{i=1}^m\alpha_i=1$ was also assumed.

\begin{proposition} Let $K\in\mathcal K^n$, $f_i:K\to[0,+\infty)$ be continuous and non identically null functions on $\mathrm{int}(K)$, $\alpha_1,\dots,\alpha_m>0$, and $\sigma={\alpha_1+\cdots +\alpha_m}$. Then
\begin{equation}\label{alfabetafunc}
\frac{1}{\vol(K)}\int_K \prod_{i=1}^m f_i(x)^{\alpha_i}\,dx
\le
\prod_{i=1}^m\left(
\frac{1}{\vol(K)}\int_K f_i(x)^{\sigma}\,dx
\right)^{\frac{\alpha_i}{\sigma}}
\end{equation}
Equality holds (for $m>1$) if and only if for any $x\in K$ and any $2\le i\le m$,
$$
f_i(x)=k_i f_1(x)
$$
for some positive constants $k_2,\dots, k_m$.
\end{proposition}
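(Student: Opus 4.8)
The plan is to mimic, almost verbatim, the proof of Lemma \ref{lem:ProductsIntegrals}, with the weighted measure $e^{-t}\,dx\,dt$ on $L$ replaced by Lebesgue measure on $K$; the whole statement then reduces to a single pointwise application of the elementary inequality \eqref{alfabetanum} followed by an integration. Before starting I would dispose of the degenerate cases. Since each $f_i$ is continuous and not identically null on $\inter(K)$, it is strictly positive on a nonempty open subset of $K$, so $\int_K f_i(x)^\sigma\,dx>0$; and if $\int_K f_i(x)^\sigma\,dx=+\infty$ for some $i$ then the right-hand side of \eqref{alfabetafunc} is $+\infty$ and there is nothing to prove. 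Hence we may assume $0<\tfrac{1}{\vol(K)}\int_K f_i(x)^\sigma\,dx<\infty$ for every $i$ (values of the $f_i$ on the Lebesgue-null set $\relbd(K)$ being irrelevant to all integrals).

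Next comes the scaling reduction. Replacing each $f_i$ by $\lambda_i f_i$ with $\lambda_i:=\bigl(\tfrac{1}{\vol(K)}\int_K f_i^\sigma\,dx\bigr)^{-1/\sigma}$ multiplies the left-hand side of \eqref{alfabetafunc} by $\prod_i\lambda_i^{\alpha_i}$ and the right-hand side by $\prod_i(\lambda_i^\sigma)^{\alpha_i/\sigma}=\prod_i\lambda_i^{\alpha_i}$, so the inequality is invariant; thus we may assume
$$\frac{1}{\vol(K)}\int_K f_i(x)^\sigma\,dx=1\qquad(1\le i\le m),$$
and it suffices to prove $\tfrac{1}{\vol(K)}\int_K\prod_i f_i(x)^{\alpha_i}\,dx\le1$. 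For each fixed $x\in K$ I would apply \eqref{alfabetanum} with $\beta_i=f_i(x)$ to get the pointwise bound $\prod_{i=1}^m f_i(x)^{\alpha_i}\le\sum_{i=1}^m\tfrac{\alpha_i}{\sigma}f_i(x)^\sigma$, and then integrate over $K$ and divide by $\vol(K)$:
$$\frac{1}{\vol(K)}\int_K\prod_{i=1}^m f_i(x)^{\alpha_i}\,dx\le\sum_{i=1}^m\frac{\alpha_i}{\sigma}\cdot\frac{1}{\vol(K)}\int_K f_i(x)^\sigma\,dx=\sum_{i=1}^m\frac{\alpha_i}{\sigma}=1,$$
which is the claim.

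For the equality characterization with $m>1$: equality in the displayed chain forces $\int_K\bigl(\sum_i\tfrac{\alpha_i}{\sigma}f_i(x)^\sigma-\prod_i f_i(x)^{\alpha_i}\bigr)\,dx=0$ with nonnegative integrand, hence equality holds a.e.\ in the pointwise inequality; by the equality case of \eqref{alfabetanum} this gives $f_1(x)=\cdots=f_m(x)$ for a.e.\ $x$, and since all $f_i$ are continuous on $\inter(K)$ it holds for every $x\in\inter(K)$. Undoing the normalization, $\lambda_1 f_1(x)=\lambda_i f_i(x)$, i.e.\ $f_i(x)=k_i f_1(x)$ with $k_i:=\lambda_1/\lambda_i>0$ for $2\le i\le m$; conversely such functions visibly produce equality. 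The inequality itself is immediate once \eqref{alfabetanum} is available, so the only genuinely delicate point is this last step — propagating the a.e.\ equality in the pointwise estimate to an honest functional identity — which is exactly where continuity on $\inter(K)$ and the negligibility of $\relbd(K)$ are used.
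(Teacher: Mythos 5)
Your proof is correct and follows essentially the same route as the paper's own proof: the same normalization $\frac{1}{\vol(K)}\int_K f_i^\sigma\,dx=1$, the same pointwise application of \eqref{alfabetanum} with $\beta_i=f_i(x)$, and then integration over $K$. You simply make explicit some points the paper leaves terse — the disposal of the degenerate cases $\int_K f_i^\sigma\,dx\in\{0,\infty\}$, and the passage from a.e.\ equality in the pointwise estimate to equality on $\inter(K)$ via continuity — but the argument is the same.
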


\begin{proof}
Replacing each $f_i$ by $\lambda_i f_i$, we may assume that
$$
\frac{1}{\vol(K)}\int_K f_i(x)^{\sigma}\,dx=1.
$$
Under this assumption, we have to prove that
$$
\frac{1}{\vol(K)}\int_K \prod_{i=1}^m f_i(x)^{\alpha_i}\,dx
\le1
$$
For any fixed $x\in K$, apply (\ref{alfabetanum}) with $\beta_i=f_i(x)$ to obtain
$$
f_1(x)^{\alpha_1}
\cdot
f_2(x)^{\alpha_2}
\cdots
f_m(x)^{\alpha_m}
\le
\sum_{i=1}^m\frac{\alpha_i}{\sigma}f_i(x)^{\sigma}
$$
Integrating over $K$ we get the desired inequality. Equality for $m>1$ stands if and only if all the normalized functions are the same, and from this, the condition follows.
\end{proof}

In the particular case of concave functions, we may use (\ref{decreasing2}) and (\ref{alfabetafunc}) to get

\begin{proof}[Proof of Theorem \ref{berwald}]
Let $\sigma={\alpha_1+\cdots +\alpha_m}$. For $1\le i\le m$, use (\ref{decreasing2}) with   $\alfa_1=\alpha_i$, $\alfa_2=\sigma$, $f=f_i$ to get
$$
\left(
\frac{{\sigma+n\choose n}}{\vol(K)}
\int_K f_i(x)^{\sigma}\,dx
\right)^{\frac{\alpha_i}{\sigma}}
\le
\frac{{\alpha_i+n\choose n}}{\vol(K)}\int_Kf_i(x)^{\alpha_i}\,dx,
$$
 Multiplying in $i=1,\dots m$
 $$
{ \sigma+n\choose n}
\prod_{i=1}^m
\left(
\frac{1}{\vol(K)}
\int_K f_i(x)^{\sigma}\,dx
\right)^{\frac{\alpha_i}{\sigma}}
\le
\prod_{i=1}^m\frac{{\alpha_i+n\choose n}}{\vol(K)}\int_Kf_i(x)^{\alpha_i}\,dx,
$$
 and using (\ref{alfabetafunc}), the result follows.

 Equality holds for $m>1$ in (\ref{decreasing2}) if and only if all $f_i$ are  roof functions, and in (\ref{alfabetafunc}) if and only if all $f_i$'s are proportional. So, all the $f_i$'s are roof functions over the same point in $K$.
\end{proof}
\section*{Acknowledgment}
Part of this work was carried out at the IMUS of the University of Sevilla and the authors are grateful for the hospitality and the support provided by the IMUS and MINECO grant MTM2015-63699-P during their stay.

\end{document}